\newcommand{\comment}[1]{}
\newtheorem{thm}{Theorem}[section]
\newtheorem{prop}[thm]{Proposition}
\newtheorem{cor}[thm]{Corollary}
\theoremstyle{definition}
\newtheorem{defn}[thm]{Definition}
\newtheorem{ex}[thm]{Example}
\theoremstyle{remark}
\newtheorem{rem}[thm]{Remark}
\newcommand{\Rr}{\mathbb R} 										
\newcommand{\Zz}{\mathbb Z}											
\newcommand{\set}[1]{\left\{#1\right\}}					
\newcommand{\tto}{\rightrightarrows}						
\newcommand{\F}{\ensuremath{\mathcal{F}}}
\newcommand{\NN}{\ensuremath{\mathcal{N}}}
\renewcommand{\graph}{\operatorname{Graph}}   
\DeclareMathOperator{\Diff}{Diff}               
\DeclareMathOperator{\Lie}{\mathcal{L}}         
\DeclareMathOperator{\Iso}{Iso}         
\DeclareMathOperator{\id}{id}    
\renewcommand{\d}{\mathrm d}                    
\newcommand{\Ss}{\mathbb S}											
\newcommand{\X}{\ensuremath{\mathfrak{X}}}			
\DeclareMathOperator{\Ver}{Ver}        
\renewcommand{\Vert}{{\operatorname{Ver}}}
\DeclareMathOperator{\Hor}{Hor}         
\DeclareMathOperator{\Gau}{Gau}       
\DeclareMathOperator{\Curv}{Curv}    
\DeclareMathOperator{\HOR}{H} 
\newcommand{\G}{\mathcal{G}}            
\newcommand{\M}{\mathcal{M}}            
\newcommand{\C}{\mathcal{C}}            
\newcommand{\s}{\mathbf{s}}             
\renewcommand{\t}{\mathbf{t}}           
\renewcommand{\F}{\mathcal{F}}          
\newcommand{\al}{\alpha}                
\newcommand{\be}{\beta}                 
\newcommand{\TM}{\mathbb{T}M}             
\renewcommand{\gg}{\mathfrak{g}}        
\newcommand{\g}{\mathfrak{g}}           
\newcommand{\z}{\mathfrak{z}}           
\DeclareMathOperator{\GL}{GL}            
\DeclareMathOperator{\gl}{\mathfrak{gl}} 
\DeclareMathOperator{\Ker}{Ker}         
\DeclareMathOperator{\im}{Im}           
\newcommand{\Ac}{\ensuremath{\mathbf{A}}} 
\newcommand{\ac}{\ensuremath{\mathbf{a}}} 
\newcommand{\Aci}{\ensuremath{\mathfrak{A}}} 
\newcommand{\aci}{\ensuremath{\mathfrak{a}}} 
\newcommand{\can}{\ensuremath{\mathrm{can}}} 
\DeclareMathOperator{\Ad}{Ad}           
\DeclareMathOperator{\Aut}{Aut}         
\DeclareMathOperator{\Der}{Der}         
\DeclareMathOperator{\InnAut}{InnAut}   
\DeclareMathOperator{\InnDer}{InnDer}   
\newcommand{\ri}{\overrightarrow}
\renewcommand{\le}{\overleftarrow}
\begin{document}

\title{Integration of Coupling Dirac Structures}

\author{Olivier Brahic}
\address{Department of Mathematics, Federal University of Paran\'a.
CP 19081, 81531-980, Curitiba, PR, Brazil.}
\email{brahicolivier@gmail}

\author{Rui Loja Fernandes}
\address{Department of Mathematics,
University of Illinois at Urbana-Champaign,
1409 W.~Green Street,
Urbana, IL 61801
USA } 
\email{ruiloja@illinois.edu}

\thanks{Partially supported by NSF grant DMS 1308472, and by CNPq grant 401253/2012-0. Both authors acknowledge the support of the \emph{Ci\^encias Sem Fronteiras} program.}


\begin{abstract}
Coupling Dirac structures are Dirac structures defined on the total space of a fibration, generalizing hamiltonian fibrations from symplectic geometry,
where one replaces the symplectic structure on the fibers by a Poisson structure.  We study the associated Poisson gauge theory, in order to describe the 
presymplectic groupoid integrating coupling Dirac structures. We find the obstructions to integrability and we give explicit geometric descriptions of the integration. 
\end{abstract}

\maketitle

\section{Introduction}             %
\label{sec:introduction}           %

A Dirac structure on a manifold $M$ is a (possibly singular) foliation of $M$ by presymplectic leaves. It is well known that Dirac structures can be expressed in terms of a Lagrangian subbundle $L$ of the generalized tangent bundle $TM\oplus T^*M$. The bundle $L$ inherits a Lie algebroid structure from the Courant bracket \cite{Cou}, so Dirac structures are infinitesimal objects. In \cite{BCWZ} the authors showed that the global object underlying a given Dirac structure $L$ is a \emph{presymplectic groupoid}, i.e., a Lie groupoid $\G\tto M$ with a multiplicative closed 2-form $\Omega_\G$ satisfying a certain non-degeneracy condition. Not all Lie algebroids can be integrated to Lie groupoids, and Dirac structures are no exception: not all Dirac structures can be integrated to presymplectic groupoids. The obstructions to integrability follow from the general obstruction theory discovered in \cite{CrFe2,CrFe1}.

The general methods presented in \cite{CrFe2,CrFe1} allow one to decide if a given Lie algebroid is integrable or not, and to produce a canonical integration in terms of an abstract path space construction. While the obstructions to integrability can be computed explicitly in many examples, describing the canonical integration $\G(L)\tto M$ of a given integrable Dirac structure $(M,L)$ is, in general, a very difficult task. However, for a few classes of Dirac structures one does have explicit integrations and often in such cases the construction of the groupoid has a nice geometric flavor.

In this paper we discuss the integration of Dirac structures which arise as \emph{couplings}. The simplest example of a coupling Dirac structure is obtained in the context of \emph{symplectic fibrations}. If $p:E\to B$ is a symplectic fibration, a \textbf{coupling form} is a closed 2-form $\omega\in\Omega^2(E)$ on the total space of the fibration whose pullback to each fibre $F_b$ is the symplectic form $\omega_{F_b}$ on the fibre. The obstructions to the existence of such a coupling form are well known and we will recall them below. We are interested in the more general situation of a \emph{Poisson fibration}: now one looks for a coupling Dirac structure on the total space of the fibration which glues the Poisson structures on the fibers. This idea of a coupling is only a rough approximation: Dirac structures are very flexible and extra care must be taken in formulating precisely what a coupling Dirac structure is (\cite{BrFe,Vai}).

Coupling Dirac structures appear very natural in Poisson and Dirac geometry. One reason is that tubular neighborhoods of symplectic and presymplectic leaves in arbitrary Poisson and Dirac manifolds are always coupling Dirac structures. Our first main result concerning the integration of couplings can be stated as follows: 

\begin{thm}
\label{thm:1}
Let $L$ be a coupling Dirac structure on $p:E\to B$. If $L$ is integrable and $(\G,\Omega)\tto B$ is a source connected, presymplectic groupoid integrating $L$, then $\Omega$ is a coupling form for a fibration $\bar{p}:\G\to\Pi(B)$ obtained by integrating the algebroid morphism $p_*\circ\sharp:L\to TB$.
\end{thm}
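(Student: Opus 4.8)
The plan is to read the statement as an assertion about integrating the short exact sequence of Lie algebroids underlying the coupling. Writing $\phi := p_*\circ\sharp : L \to TB$, I would first record that $\phi$ is a Lie algebroid morphism and in fact a \emph{fibration} of algebroids: the coupling condition is precisely what forces $p_*(\sharp(L)) = TB$, so that $\phi$ is fibrewise surjective and its kernel $K := \ker\phi = \{(X,\xi)\in L : p_*X = 0\}$ is a subalgebroid supported on the fibres, restricting over each $F_b = p^{-1}(b)$ to the vertical Dirac structure glued by the coupling. One thus obtains an extension
\begin{equation*}
0 \longrightarrow K \longrightarrow L \stackrel{\phi}{\longrightarrow} TB \longrightarrow 0,
\end{equation*}
and the theorem becomes the problem of integrating this extension and then matching the multiplicative form $\Omega$ against the fibrewise data.

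Next I would integrate $\phi$. Since $TB$ is integrated by the (source-simply-connected) fundamental groupoid $\Pi(B)$, Lie's second theorem for algebroid morphisms yields a morphism from the source-simply-connected integration $\G(L)$ to $\Pi(B)$; the real point is to descend it to the \emph{given} source-connected $\G$. I would argue that the descent holds because $\Pi(B)$ only records homotopy classes of base paths: an element of the monodromy subgroupoid $\ker(\G(L)\to\G)$ is an $L$-homotopy class of $L$-loops, and the flatness of the horizontal (Ehresmann) data built into the coupling forces the projected base loop to be null-homotopic, so the morphism kills this kernel and descends to $\bar p : \G \to \Pi(B)$. Fibrewise surjectivity of $\phi$ then makes $\bar p$ a surjective submersion, hence a fibration of manifolds whose fibre over a unit $1_b$ is the source-connected groupoid $\G_b \tto F_b$ integrating $K|_{F_b}$, i.e.\ the presymplectic groupoid of the fibre.

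Finally I would verify that $\Omega$ is a coupling form for $\bar p$. Closedness is given and multiplicativity is part of the hypothesis, so the content is that the pullback of $\Omega$ to each fibre $\G_b$ is the multiplicative, leafwise nondegenerate presymplectic form integrating the vertical Dirac structure. Both $\Omega|_{\G_b}$ (the pullback of a multiplicative form under the inclusion $\G_b \hookrightarrow \G$, hence multiplicative) and the canonical form of $\G_b$ are closed multiplicative $2$-forms integrating the same infinitesimally multiplicative datum, namely the one attached to $K|_{F_b}$; by uniqueness of multiplicative forms integrating a given IM form on a source-connected groupoid they coincide, and the presymplectic (leafwise nondegeneracy) condition on the fibre is then automatic.

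I expect the main obstacle to be the descent of the integrated morphism to the merely source-connected $\G$, together with the identification of $K|_{F_b}$ with the vertical Dirac structure: one must show that projecting the horizontal holonomy of $L$-loops produces null-homotopic loops in $B$, which is exactly where the precise formulation of the coupling condition does the essential work. Once this is in place, the identification of $\bar p$ as a fibration and the matching of $\Omega$ with the fibrewise presymplectic forms reduce to the surjectivity of $\phi$ and a uniqueness argument, respectively.
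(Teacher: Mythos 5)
Your descent step fails, and it is not a presentational gap: the claim itself is false. You assert that every element of $\ker(\G(L)\to\G)$ projects to a null-homotopic loop in $B$, on the grounds of ``the flatness of the horizontal (Ehresmann) data built into the coupling''. A coupling Dirac structure has no flatness built in: by Proposition \ref{prop:obstr:Dirac}(iv) the curvature of $\Gamma$ is only required to be Hamiltonian, and it is generically nonzero. Worse, the conclusion fails even in flat examples. Take $B=\Ss^1$, $E=B$, $p=\id$, and $L=TB\oplus 0=\graph(0)$, so $L\cong TB$; then $\G(L)=\Pi(B)$ with $\Omega=0$, and the pair groupoid $\G:=B\times B$ with $\Omega_\G=0$ is a source connected presymplectic integration of $L$. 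The kernel $\NN$ of $\Pi(B)\to B\times B$ is the whole fundamental-group bundle and $\tilde p=\id$, so $\tilde p(\NN)$ consists of \emph{all} homotopy classes of loops; and indeed no groupoid morphism $B\times B\to\Pi(B)$ integrating $\id_{TB}$ can exist, since it would restrict to a local diffeomorphism from the compact source fiber $\Ss^1$ onto an open subset of the noncompact source fiber $\Rr$ of $\Pi(B)$. This is exactly why the paper's precise formulation (Corollary \ref{cor:coupling}) does \emph{not} keep $\Pi(B)$ as the base: one shows that $\tilde p(\NN)$ is an embedded bundle of normal subgroups of $\Pi(B)$, and the base of $\bar p$ is the quotient groupoid $\G_B:=\Pi(B)/\tilde p(\NN)$, which is a (generally smaller) integration of $TB$. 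The statement with $\Pi(B)$ itself is literally correct only for $\G=\G(L)$.

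The verification of the coupling property is also incomplete. ``Coupling form'' means fiber non-degeneracy of $\graph(\Omega)$, i.e.\ condition \eqref{eq:fiber:non:degnrt}, at \emph{every} point of $\G$, equivalently nondegeneracy of $\Omega$ on every fiber of $\bar p$ --- including the fibers over non-unit arrows of the base groupoid. Those fibers are not subgroupoids, carry no canonical multiplicative form, and your uniqueness-of-multiplicative-forms argument says nothing about them (it also presupposes, without proof, that $\Omega$ restricted to a unit fiber integrates the IM datum of $\graph(\pi_V|_{E_b})$). The paper's proof of Theorem \ref{thm:grpdcoupling} avoids this entirely by a pointwise linear-algebra computation at an arbitrary $g$: for $(X,\alpha)\in(\Ver_{\G(L)}\oplus\Ver^0_{\G(L)})\cap\graph\Omega$ one writes $\alpha=\s^*a_0-\t^*a_1$, observes that $(\s_*X,a_0)\in L_{\s(g)}$ and $(\t_*X,a_1)\in L_{\t(g)}$ because $(\t,\s)$ is a forward Dirac map, kills these pairs by fiber non-degeneracy of $L$, and then kills $X$ by the presymplectic-groupoid axiom $\ker\Omega\cap\ker\d\s\cap\ker\d\t=0$. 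Finally, surjectivity of $\tilde p$ (needed for $\bar p$ to be a fibration at all) does not follow from fibrewise surjectivity of $p_*\circ\sharp$: since $\tilde p$ covers $p$ rather than the identity, its image need not even be a subgroupoid; the paper obtains surjectivity by horizontally lifting base paths, which uses the standing completeness assumption on $\Gamma$ that you never invoke.
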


In other words, coupling Dirac structures integrate to coupling forms. Moreover, one can express the geometric data of the integration in terms of the geometric data associated with the coupling Dirac structure $L$. As a consequence of this result, any presymplectic groupoid integrating a coupling Dirac structure on $p:E\to B$ is Morita equivalent to a symplectic groupoid integrating the induced vertical Poisson structure on a fiber $E_b$.

The previous result describes the symplectic geometry of the integration. One is also interested in the groupoid structure of the integration and the obstructions to integrability. Our inspiration to deal with this integration problem comes from a beautiful gauge construction, known as the \emph{Yang-Mills setup}, which yields coupling Dirac structures (\cite{BrFe2,GLS,GS,MaSa,Wein2,Wa}). One starts with a principal $G$-bundle $P\to B$, with a connection $\Gamma$, and a Hamiltonian action $G\times F\to F$ on a Poisson manifold $(F,\pi_F)$, and constructs a coupling Dirac structure $L$ on the associated bundle $E=P\times_G F$ extending the Poisson structures on the fibers. This construction can be further twisted by a closed 2-form on the base $B$ and it leads to many examples of coupling Dirac structures.

We show that one can integrate a Yang-Mills phase space as follows:
\begin{enumerate}[(i)]
\item Integrate first the fiber $(F,\pi_F)$ to a symplectic groupoid $\F\tto F$, and the principal $G$-bundle $P\to B$ to the gauge groupoid $\G(P)\tto B$;
\item Then integrate the vertical Poisson structure $\Ver^*$ to a \emph{fibered symplectic groupoid} $\G_V=P\times_G \F\tto E$;
\item The gauge groupoid $\G(P)\tto B$ acts on the fibered groupoid $\G_V\tto E\to B$, yielding a semi-direct product groupoid $\G(P)\ltimes \G_V\tto E$.
\item  Finally, the integration of the Yang-Mills phase space is a quotient 
\[ \G(L)=\G(P)\ltimes \G_V/\,\C, \] 
where $\C$ is a certain \emph{curvature groupoid}.
\end{enumerate}
Along the way we obtain the obstructions to integrability of a Yang-Mills phase space. Our integration procedure does not uses the principal bundle connection. Hence, all the different couplings obtained by varying the connection have the same integrating Lie groupoid $\G\tto E$. On the other hand, we also provide a construction for the presymplectic form $\Omega_\G$, which obviously depends on the choice of principal connection.

We show that if one is willing to accept infinite dimensional principal bundles, every coupling on a locally trivial fibration arises as a Yang-Mills phase space. This provides us with the clue to integrate arbitrary coupling Dirac structures as follows:

\begin{thm}
\label{thm:2}
Let $L$ be a coupling Dirac structure on $E\to B$. The source 1-connected groupoid $\G(L)$ integrating $L$ naturally identifies with equivalence classes in $P(TB)\ltimes_B \G(\Ver^*)$ under a certain equivalence relation:
 \begin{itemize}
 \item $(\gamma_0,g_0)\sim(\gamma_1,g_1)$ if and only if $\exists$ a homotopy $\gamma_B:I\times I\to B$, $(t,\epsilon)\mapsto \gamma^\epsilon_B(t)$ between $\gamma_0$ and $\gamma_1$, such that $g_1=\partial (\gamma_B,\t(g_0)).g_0.$
 \end{itemize}
where $\partial:P(TB)\times_B E\to \G(\Ver^*)$ is a certain ``groupoid" homomorphism, that can be computed explicitly.
\end{thm}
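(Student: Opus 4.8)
The plan is to build $\G(L)$ from the Weinstein path-space model $\G(L)=P(L)/\!\sim_L$ \cite{CrFe1} and to use the coupling structure to split an $L$-path into a base part and a vertical part. The coupling hypothesis (see \cite{BrFe}) equips $L$ with a short exact sequence of Lie algebroids $0\to\Ver^*\to L\xrightarrow{p_*\circ\sharp}TB\to 0$, so that an $L$-path $a$ covering $\gamma_E\colon I\to E$ projects to the $TB$-path $\gamma_B:=p_*\circ\sharp\circ a$, while, after horizontally lifting $\dot\gamma_B$ via the coupling's Ehresmann connection, its vertical component is a genuine $\Ver^*$-path along the fibers. Integrating only this vertical direction sends the component to a class $g\in\G(\Ver^*)=P(\Ver^*)/\!\sim$, and I obtain a surjection $\Phi\colon P(L)\to P(TB)\ltimes_B\G(\Ver^*)$, $a\mapsto(\gamma_B,g)$. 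By construction $\Phi$ identifies any two $L$-paths joined by an $L$-homotopy with constant base projection; hence $\G(L)$ is the further quotient of $P(TB)\ltimes_B\G(\Ver^*)$ by the relation induced by $L$-homotopies with moving base, and the whole problem reduces to identifying this residual relation with the one displayed in the statement.

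Next I would construct $\partial$ and recognize it as that relation. The non-involutivity of the horizontal distribution of the coupling is measured by a curvature $2$-form $\Curv$ on $B$ with values in sections of $\Ver^*$ (the vertical hamiltonians); given a base homotopy $\gamma_B\colon I\times I\to B$ based at a point $e\in E$, integrating $\Curv$ over the square gives a time-dependent $\Ver^*$-path, whose integration to the groupoid yields $\partial(\gamma_B,e)\in\G(\Ver^*)$. That the assignment $\partial\colon P(TB)\times_B E\to\G(\Ver^*)$ is a ``groupoid'' homomorphism — additive under concatenation of base paths and under stacking of homotopies — is precisely the multiplicativity needed for the relation $\sim_\partial$ of the statement to be transitive, and it follows from the Bianchi identity together with Stokes' theorem, exactly as the curvature groupoid $\C$ arises in the Yang--Mills model discussed above. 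The explicit computability asserted in the statement is then the explicit local formula for $\Curv$ in terms of the coupling data.

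The crux is the homotopy comparison. Following \cite{CrFe1}, an $L$-homotopy between $a_0$ and $a_1$ is an $L$-valued $1$-form on $I\times I$ solving a Maurer--Cartan-type (``connection'') equation, and its base projection is precisely a homotopy $\gamma_B$ of $TB$-paths between $\gamma_0$ and $\gamma_1$. I would split this equation along the exact sequence above: its horizontal component merely records that $\gamma_B$ is a base homotopy and is automatically satisfied, while its vertical component, once the vertical bundle is trivialized along $\gamma_B$ by the connection, becomes an evolution equation whose inhomogeneous term is exactly $\Curv(\partial_\epsilon\gamma_B,\partial_t\gamma_B)$. Integrating this vertical equation over the square yields $g_1=\partial(\gamma_B,\t(g_0))\cdot g_0$; conversely, prescribing $\gamma_B$ and $g_0$ and solving the same equation by ODE integration produces an $L$-homotopy realizing any pair related by the displayed formula. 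This step is the main obstacle, since it demands a careful bookkeeping of the terms coming from the connection and a verification that the construction is independent of the auxiliary trivializations chosen along $\gamma_B$.

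Finally I would assemble the identification: $\Phi$ descends to a bijection $\big(P(TB)\ltimes_B\G(\Ver^*)\big)/\!\sim_\partial\ \longrightarrow\ P(L)/\!\sim_L=\G(L)$, and since $\Phi$ was shown to intertwine concatenation of $L$-paths with the partial product on the semidirect product, this bijection is an isomorphism of topological groupoids over $E$, and of Lie groupoids when $L$ is integrable. Source $1$-connectedness is then automatic, inherited from the Weinstein model $P(L)/\!\sim_L$, and the resulting fibration over $\Pi(B)$ agrees with the one produced by Theorem~\ref{thm:1}, which furnishes a consistency check.
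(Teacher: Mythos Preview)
Your approach is essentially the paper's: Proposition~\ref{splitpath} gives exactly your splitting $P(L)\simeq P(TB)\ltimes P(\Ver^*)$ via holonomy transport of the vertical component, Proposition~\ref{prop:split:conc} records the semidirect concatenation, and the homotopy analysis is then deferred to \cite[Sec.~4]{Br}, where the evolution equation for $A$-homotopies in an extension is split along the short exact sequence precisely as you sketch. One small correction worth flagging: the ``curvature'' entering the explicit formula~\eqref{partial:path} for $\partial$ is $\d_V\omega_H\in\Omega^2(B,\Gamma(\Ver^*))$, the curvature of $\graph(\omega_H)$ as an Ehresmann connection on the algebroid extension $L$ (cf.~\eqref{eq:splitbracket3}), rather than the Ehresmann curvature $\Curv\in\Omega^2(B,\Ver)$ of $\Hor\subset TE$ that you invoke---the two are related by the curvature identity~\eqref{eq:curvature}, but it is the $\Ver^*$-valued one that yields a $\Ver^*$-path and hence an element of $\G(\Ver^*)$.
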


The quotes in ``groupoid" are here to remind that the path space $P(TB)$ is not really a groupoid, since associativity only holds up to isomorphism. 

Again, Theorem \ref{thm:2} should be viewed as an infinite dimension version of the groupoid integrating the Yang-Mills phase space. It also gives rise to the integrality obstructions of coupling Dirac structure. Namely, one checks that the restriction of the map $\partial:P(TB)\times_B E\to \G(\Ver^*)$ to a sphere in $B$ based at some $b\in B$ (seen as a map $\gamma_B:I^2\to B$ such that $\gamma_B(\partial I^2)=\{b\}$) is independent of its homotopy class. Then if we let $\M:=\partial (\pi_2(B)\times_B E)$, which we call the \emph{monodromy groupoid of the fibration}, we have:

\begin{thm} 
\label{thm:3}
Let $L$ be a coupling Dirac structure on $E\to B$ and assume that the associated connection $\Gamma$ is complete. Then $L$ is an integrable Lie algebroid if and only if the following conditions hold:
\begin{enumerate}[(i)]
\item the typical Poisson fiber $(E_x,\pi_V|_{E_x})$ is integrable,
\item the injection $\mathcal{M}\hookrightarrow \G(\Ver^*)$ is an embedding.
\end{enumerate}
\end{thm}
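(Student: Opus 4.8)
The plan is to deduce the statement from the explicit description of the source $1$-connected groupoid given in Theorem~\ref{thm:2}. Recall that integrability of $L$ is equivalent to the Weinstein groupoid $\G(L)$ being smooth, and Theorem~\ref{thm:2} presents $\G(L)$ as the quotient $P(TB)\ltimes_B\G(\Ver^*)/\!\sim$. Conditions (i) and (ii) should correspond to the two independent sources of non-smoothness in this quotient: the integrability of the fiber Poisson structure, which controls whether $\G(\Ver^*)$ exists as a smooth groupoid, and the behavior of the base monodromy encoded by $\partial$, which controls whether the further quotient by $\sim$ is smooth. Completeness of $\Gamma$ enters throughout, guaranteeing that parallel transport along paths in $B$ is defined for all times, so that both the action of $P(TB)$ and the homomorphism $\partial$ are globally well-defined.

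First I would establish necessity. Assuming $L$ integrable, Theorem~\ref{thm:1} presents the integrating form $\Omega$ as a coupling form for a fibration $\bar p:\G\to\Pi(B)$; the fiber of $\bar p$ over a unit $x\in B$ is then a symplectic groupoid integrating $(E_x,\pi_V|_{E_x})$, which gives (i). For (ii), I would exploit that smoothness of $\G(L)$ forces the equivalence relation $\sim$ to have embedded graph. Taking a constant path $\gamma_0$ and letting the homotopy range over spheres based at $x$, the relation identifies $(\gamma_0,g_0)$ with $(\gamma_0,\partial(\text{sphere})\cdot g_0)$; hence the isotropy of $\sim$ along trivial paths is precisely the image of $\partial$ on $\pi_2(B)$, namely $\M$. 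Smoothness of the quotient then requires $\M\hookrightarrow\G(\Ver^*)$ to be an embedding.

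For sufficiency, assume (i) and (ii). Condition (i), together with the local triviality of $E\to B$ and the fact that the transition maps act by Poisson diffeomorphisms, yields a smooth fibered symplectic groupoid $\G(\Ver^*)\tto E$. Completeness of $\Gamma$ then makes $P(TB)\ltimes_B\G(\Ver^*)$ a well-defined object on which the orbits of $\sim$ are realized by an action whose isotropy along the units is governed by $\M$. The remaining task is to verify, via the Godement criterion, that the graph of $\sim$ is an embedded submanifold, so that the quotient is a smooth finite-dimensional Lie groupoid; condition (ii) is exactly what makes the monodromy contribution to this graph embedded rather than merely immersed.

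The main obstacle is this last smoothness statement: one must disentangle the infinite-dimensional path space $P(TB)$ so that the finite-dimensional quotient is manifestly a manifold, and show that the only remaining obstruction is the embeddedness of $\M$. This is the fibered analogue of the Crainic--Fernandes monodromy criterion, in which the fiber monodromy has already been absorbed into the integrability of $\G(\Ver^*)$, leaving only the transverse monodromy coming from $\pi_2(B)$. The delicate point is to check that $\M$ controls the failure of $\sim$ to be closed \emph{uniformly} along $E$, which is precisely where completeness of $\Gamma$ is essential: it ensures that parallel transport identifies the monodromy over different points of $B$, so that the embeddedness of $\M$ becomes both necessary and sufficient for the uniform discreteness demanded by the general integrability criterion.
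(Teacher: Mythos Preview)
Your necessity argument is close to the paper's, though the paper argues a bit more directly: since $L$ is integrable, the morphism $q:\G(L)\to\Pi(B)$ integrating $p_*\circ\sharp$ is a smooth surjective submersion, so $\ker q$ is a Lie groupoid integrating $\Ver^*$ (giving (i)), and since $\ker q=\G(\Ver^*)/\M$ is smooth, $\M$ must be embedded (giving (ii)). No appeal to the coupling form of Theorem~\ref{thm:1} is needed.

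For sufficiency, however, the paper takes a genuinely different and much shorter route than your Godement-on-the-quotient plan. The key observation you are missing is that $\ker\sharp_L\subset\Ver^*$: any element of the isotropy Lie algebra of $L$ is automatically vertical. Hence every Crainic--Fernandes monodromy element $\xi\in\mathcal{N}_x(L)$ can be regarded as a constant $\Ver^*$-path, and since $[\xi]_L$ is a unit in $\G(L)$, the class $[\xi]_V$ lies in $\ker j=\M$. This gives an inclusion $\mathcal{N}(L)\subset\M$ near the identity section. If $\M$ is embedded in $\G(\Ver^*)$ it is discrete there, so $\mathcal{N}(L)$ is uniformly discrete near the units, and the standard integrability criterion for $L$ is satisfied. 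No infinite-dimensional quotient needs to be analyzed.

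Your proposed route has a real gap at exactly the point you flag as ``the main obstacle'': applying a Godement-type criterion to the equivalence relation $\sim$ on the Banach manifold $P(TB)\ltimes_B\G(\Ver^*)$ to produce a finite-dimensional smooth quotient is not a standard move, and you have not indicated how embeddedness of $\M$ alone controls the full graph of $\sim$ (which also involves the $TB$-homotopy directions). The paper's argument sidesteps this entirely by pushing everything back to the finite-dimensional monodromy groups $\mathcal{N}(L)$ of $L$ itself.
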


The transgression map $\partial:\pi_2(B)\times_B E\to \G(\Ver^*)$ is computable in many examples, and so are the integrability obstructions of Theorem \ref{thm:3}. We refer to the last section of the paper, where we will discuss for example the trivial fibration $p:\Ss^2\times\mathfrak{so}^*(3)\to \Ss^2$, with the usual Lie-Poisson structure on the fibers. Using Theorem \ref{thm:3} one can see that there is only a 2-parameter family of integrable Dirac couplings of rank 4, while there is an infinite dimensional family of non-integrable Dirac couplings of rank 4.
\newpage


\tableofcontents

\section{Coupling Dirac structures}%
\label{sec:Dirac:connections}      %

The notion of a \emph{coupling} was first introduced in the context of Dirac geometry in \cite{Vai} and \cite{BrFe}, but their origins lie in the theory of symplectic and hamiltonian fibrations (see, e.g., \cite{GLS}). In this section we will recall the definition of a \emph{coupling Dirac structure} and study its first properties.

\subsection{Dirac Structures} We first recall basic definitions concerning Dirac structures, mainly to fix notations and sign conventions. For details, see e.g.~\cite{Cou}.

Given a a smooth manifold $M$, we will denote by $\TM:=TM\oplus T^*\!M$ its generalized tangent bundle. The space of sections $\Gamma(\TM)=\X(M)\times\Omega^1(M)$ has natural pairings defined by:
\begin{equation}
 \label{eq:pairings}
\langle (X,\al),(Y,\be)\rangle_\pm:=\frac{1}{2}\left(i_Y\al\pm i_X\be\right),
\end{equation}
and a skew-symmetric bracket, called the \textbf{Courant bracket}, given by:
\begin{equation}
  \label{eq:Courant:bracket}
  \llbracket(X,\al),(Y,\be)\rrbracket:=
  \bigl([X,Y],\Lie_X\be-\Lie_Y\al+\d\langle (X,\al),(Y,\be)\rangle_-\bigr).
\end{equation}

\begin{defn} Given a smooth manifold $M$, an \textbf{almost Dirac structure} $L$ on $M$  is a subbundle $L\subset \TM:=TM\oplus T^*\!M$ of the generalized tangent bundle, which is maximal isotropic with respect to $\langle~,~\rangle_+$. An almost Dirac structure is said to be a \textbf{Dirac structure} if it is furthermore closed under the bracket $\llbracket~,~\rrbracket$.
\end{defn}

In general, the Courant bracket \emph{does not} satisfy the Jacobi identity.  For a Dirac structure $L$, however, the restriction of the bracket $\llbracket~,~\rrbracket$ to $\Gamma(L)$ yields a Lie bracket and if we let $\sharp:L\to TM$ be the restriction of the projection to $TM$, then $(L,\llbracket~,~\rrbracket,\sharp)$ defines a Lie algebroid. Each leaf of the corresponding characteristic foliation (obtained by integrating the singular distribution $\im\sharp$) carries a pre-symplectic form $\omega$: if $X,Y\in \im\sharp$, we can choose $\al,\be\in T^*\!M$ such that
$(X,\al),(Y,\be)\in L$ and set:
\begin{equation}
  \label{eq:pre:symp:form}
  \omega(X,Y):=\langle (X,\al),(Y,\be)\rangle_-=i_Y\al=-i_X\be.
\end{equation}
One can check that this definition is independent of choices and that $\omega$ is indeed closed. Thus we may think of a Dirac manifold as a (singular) foliated
manifold by pre-symplectic leaves.

\subsection{Fiber non-degenerate Dirac structures}

In what follows, unless otherwise stated, by a \emph{fibration} $p:E\to B$ we mean a surjective submersion.

\begin{defn}
Let $p:E\to B$ be a fibration. An almost Dirac structure $L$ on $E$ is called \textbf{fiber non-degenerate} if 
\begin{equation}
\label{eq:fiber:non:degnrt}
(\Vert\oplus\Vert^0)\cap L=\{0\}.
\end{equation}
Here $\Vert:=\ker\ p_*\subset TE$ denotes the vertical distribution, and $\Vert^0\subset T^*\!E$ its annihilator.
\end{defn}

In the terminology of \cite{FrMa}, when $L$ is a Poisson structure, this condition means that the fibers of $p:E\to B$ are \emph{Poisson transversals}. 

In order to understand the geometric meaning of this definition, one needs to decompose a fiber non-degenerate almost structure $L$ into its various components. First, $L$ gives rise to an \emph{Ehresmann connection}, by setting:
\begin{equation}
\label{eq:hor:space}
\Hor:=\set{X\in TE:\exists\, \al\in(\Vert)^0, (X,\al)\in L}.
\end{equation}
The fact that $\Hor\oplus \Vert=TE$ follows easily from \eqref{eq:fiber:non:degnrt}. 

It follows from \eqref{eq:hor:space} that the horizontal distribution $\Hor$ is contained in the characteristic distribution of $L$. Hence, we obtain a 2-form $\omega_H\in\Omega^2(\Hor)$ by restricting  the natural 2-form on the characteristic distribution to $\Hor$. More precisely, \eqref{eq:fiber:non:degnrt} and \eqref{eq:hor:space} together show that for each $X\in\Hor$, there
exists a unique $\al\in\Vert^0$ such that $(X,\al)\in L$. The skew-symmetric bilinear form $\omega_H:\Hor\times\Hor\to \Rr$ is obtained as:
\begin{equation}
\label{eq:2:form}
\omega_H(X_1,X_2):=\langle (X_1,\al_1),(X_2,\al_2)\rangle_-,
\end{equation}
where $\al_1,\al_2\in\Vert^0$ are the unique elements such that
$(X_1,\al_1),(X_2,\al_2)\in L$. Since $L$ is maximal isotropic, this $2$-form can also be written:
\begin{equation}
\label{eq:2:form:alt}
\omega_H(X_1,X_2)=\al_1(X_2)=-\al_2(X_1),
\end{equation}
so we obtain a smooth $2$-form $\omega_H\in\Omega^2(\Hor)$.  

Finally, we can associate to $L$ a vertical bivector field $\pi_V \in\X^2(\Vert)$. To see this, first observe that the annihilator of the horizontal distribution is:
\begin{equation}
\label{eq:annh:hor:space}
\Hor^0=\set{\al\in T^*\!E:\exists X\in \Vert, (X,\al)\in L}.
\end{equation}
This, together with \eqref{eq:fiber:non:degnrt}, shows that for each $\al\in\Hor^0$, there 
exists a unique $X\in\Vert$ such that $(X,\al)\in L$. Then one can
define a skew-symmetric bilinear form $\pi_V:\Hor^0\times\Hor^0\to \Rr$ by letting:
\begin{equation}
\label{eq:2:vector}
\pi_V(\al_1,\al_2):=\langle (X_1,\al_1),(X_2,\al_2)\rangle_-,
\end{equation}
where $X_1,X_2\in\Vert$ are the unique elements such that
$(X_1,\al_1),(X_2,\al_2)\in L$. Since $L$ is maximal isotropic the form $\pi_V:\Hor^0\times\Hor^0\to \Rr$ can also be written as:
\begin{equation}
\label{eq:2:vector:alt}
\pi_V(\al_1,\al_2)=\al_1(X_2)=-\al_2(X_1).
\end{equation}
Notice that the splitting $TE=\Hor\oplus\Vert$ allows us to
identify $\Hor^0=\Vert^*$, thus $\pi_V$ becomes a bivector field on the
fibers of $p:E\to B$. 

A more geometric interpretation of $\pi_V$ is that it is formed by the pullback to each fiber of the Dirac structure $L$: if one fixes a fiber $i_b:F_b=p^{-1}(b)\hookrightarrow E$, then $TF_b=\Vert|_{F_b}$ and one can identify $T^*\!F_b=\Vert^*|_{F_b}\simeq\Hor^0$ by using the connection. The pull-back Dirac structure $i^*L$ is then given by:
\begin{align*}
  i^*L&=\set{(X,\al|_\Vert)\in \Vert\oplus\Vert^*: (X,\al)\in L}\\
      &=\set{(X,\al)\in \Vert\oplus\Hor^0: (X,\al)\in L}\\
      &=\set{(X,\al)\in \Vert\oplus\Hor^0: X=\pi_V(\al,\cdot)}
      =\graph(\pi_V),
\end{align*}
where the third equality follows from \eqref{eq:2:vector:alt}. 

The preceding discussion justifies the following definition:

\begin{defn}
 A \textbf{geometric data} on a fibration $p:E\to B$ is a triple $(\pi_V,\Gamma,\omega_H)$ where
\begin{itemize}
\item $\pi_V\in\X^2(\Vert)$ is a vertical bivector field.
\item $\Gamma$ is an \emph{Ehresmann} connection, whose horizontal distribution will be denoted $\Hor$,
\item $\omega_H\in\Omega^2(\Hor)$ is an horizontal 2-form,
\end{itemize}
\end{defn}

Then we have: 

\begin{prop}\label{prop:1-1fibernondeg:couplingD}
Given a fibration $E\to B$, there is a $1$-$1$ correspondence between fiber non-degenerate almost Dirac structures and geometric data on the fibration.
\end{prop}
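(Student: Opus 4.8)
The plan is to exhibit the inverse of the decomposition $L\mapsto(\pi_V,\Gamma,\omega_H)$ and to check that the two assignments are mutually inverse. The forward map is essentially already constructed in the discussion preceding the statement: from a fiber non-degenerate almost Dirac structure $L$ one reads off the Ehresmann connection via \eqref{eq:hor:space}, the horizontal $2$-form $\omega_H$ via \eqref{eq:2:form}, and the vertical bivector $\pi_V$ via \eqref{eq:2:vector}. So the real content is the reverse assignment and the bookkeeping showing that it undoes the decomposition.

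For the reverse map I would fix geometric data $(\pi_V,\Gamma,\omega_H)$ and use the connection-induced splitting $TE=\Hor\oplus\Vert$ together with the dual splitting $T^*\!E=\Vert^0\oplus\Hor^0$, under which $\Vert^0\cong\Hor^*$ and $\Hor^0\cong\Vert^*$. This lets me regard $\omega_H$ as a bundle map $\omega_H^\flat:\Hor\to\Vert^0$, $X\mapsto i_X\omega_H$ (extended by zero on $\Vert$), and $\pi_V$ as a bundle map $\pi_V^\sharp:\Hor^0\to\Vert$, $\al\mapsto\pi_V(\al,\cdot)$. I then set
\[
L:=\set{\bigl(X_h+\pi_V^\sharp\al_v,\ \omega_H^\flat X_h+\al_v\bigr): X_h\in\Hor,\ \al_v\in\Hor^0}\subset TE\oplus T^*\!E,
\]
which is modeled on the fiberwise graph description $i^*L=\graph(\pi_V)$ and on the horizontal graph $X_h\mapsto\omega_H^\flat X_h$ coming from \eqref{eq:2:form:alt}.

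Next I would verify that $L$ is a fiber non-degenerate almost Dirac structure. Isotropy with respect to $\langle~,~\rangle_+$ is a direct computation: since $\omega_H^\flat X_h\in\Vert^0$ pairs only with horizontal vectors while $\al_v\in\Hor^0$ pairs only with vertical ones, the cross terms in $\langle(X_1,\al_1),(X_2,\al_2)\rangle_+$ collapse to $\omega_H(X_{h,1},X_{h,2})+\omega_H(X_{h,2},X_{h,1})$ together with $\pi_V(\al_{v,2},\al_{v,1})+\pi_V(\al_{v,1},\al_{v,2})$, both of which vanish by skew-symmetry of $\omega_H$ and $\pi_V$. Since $\rank L=\rank\Hor+\rank\Hor^0=\dim E$, the subbundle $L$ is maximal isotropic, hence an almost Dirac structure. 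Fiber non-degeneracy \eqref{eq:fiber:non:degnrt} follows because an element of $(\Vert\oplus\Vert^0)\cap L$ forces $X_h=0$ (as $\Hor\cap\Vert=\{0\}$), whence $\al=\al_v\in\Hor^0\cap\Vert^0=\{0\}$, so that $\pi_V^\sharp\al_v=0$ and the element vanishes.

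Finally, to see the two maps are mutually inverse, I would run the decomposition \eqref{eq:hor:space}, \eqref{eq:2:form}, \eqref{eq:2:vector} on the $L$ just built and check that it returns $(\pi_V,\Gamma,\omega_H)$ unchanged, which is immediate from the construction. Conversely, starting from an arbitrary fiber non-degenerate $L$, the preceding discussion shows that $(X_h,\omega_H^\flat X_h)\in L$ and $(\pi_V^\sharp\al_v,\al_v)\in L$ for all $X_h\in\Hor$, $\al_v\in\Hor^0$; their span lies in $L$ and already has dimension $\dim E=\rank L$, forcing equality with the reconstructed structure. The only delicate point — and the main potential pitfall — is the consistent handling of the connection-induced identifications $\Vert^0\cong\Hor^*$ and $\Hor^0\cong\Vert^*$, together with the sign conventions in \eqref{eq:2:form:alt} and \eqref{eq:2:vector:alt}, so that $\omega_H^\flat$ and $\pi_V^\sharp$ correspond exactly to the forms extracted by the decomposition. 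Once these are pinned down, all the verifications are routine.
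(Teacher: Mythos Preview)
Your proposal is correct and follows essentially the same approach as the paper: the paper also refers back to the preceding discussion for the forward map and then writes down the same inverse formula $L=\set{(X+\pi_V^\sharp(\al),i_X\omega_H+\al): X\in\Hor,\ \al\in\Hor^0}$, noting it as $L=\graph{\omega_H}\oplus\graph{\pi_V}$. The only difference is that you spell out the verifications of isotropy, maximal rank, fiber non-degeneracy, and the mutual inverse property, whereas the paper leaves these implicit.
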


\begin{proof}
We have seen above how to associate to a fiber non-degenerate almost Dirac structure $L$ geometric data $(\pi_V,\Gamma,\omega_H)$. Conversely, given a geometric data $(\Gamma_L,\omega_H,\pi_V)$ on a fibration $p:E\to B$, we can define an almost Dirac structure $L$ by setting:
\begin{equation}
\label{eq:Dirac:components}
 L:=\set{(X+\pi_V^\sharp(\al),i_X\omega_H+\al): X\in\Hor, \al\in\Hor^0}.
\end{equation}
Notice that using the identifications $\Hor^0=\Ver^*,\Ver^0=\Hor^*$, we obtain simply $L=\graph{\omega_H}\oplus\graph{\pi_V}$,
which will prove to be a meaningful way of presenting $L$ later on.
\end{proof}

Given a fiber non-degenerate almost Dirac structure $L$, with associated geometric data $(\Gamma,\omega_H,\pi_V)$, we now express the conditions on this data which will guarantee that $L$ is a Dirac structure, i.e., that it is closed under the Courant bracket.

Let us first introduce some notations associated with the connection $\Gamma$. For a vector field $v\in\X(B)$ we denote by $h(v)\in\X(E)$ its horizontal lift. The exterior covariant differential $\d_\Gamma: \Omega^{k}(B)\otimes C^\infty(E)\to \Omega^{k+1}(B)\otimes C^\infty(E)$ is given by:
\begin{multline*} 
\d_\Gamma \omega(v_0,\dots,v_k)=\sum_{i=0}^ k (-1)^i \Lie_{h(v_i)} \omega(v_0,\dots,\hat{v}_i,\dots, v_k)+\\
+\sum_{i<j} (-1)^{i+j}\omega([v_i,v_j],v_0,\dots,\hat{v}_i,\dots,\hat{v}_j,\dots,v_k)
\end{multline*}
The curvature of $\Gamma$ will be denoted by $\Curv_\Gamma\in \Omega^2(B,\Ver)$ and is given by:
\[ \Curv_\Gamma(v,w)=[h(v),h(w)]-h([v,w])\quad (v,w\in\X(B)). \]
The curvature measures the failure of $\Hor$ in being involutive or, equivalently, the failure of $\d_\Gamma$ being a differential since we have:
\[ \d^2_\Gamma f(u,v)=\Lie_{\Curv(u,v)}f, \]
for any $f\in C^\infty(E)$ and $u,v\in\X(B)$. We can now state:

\begin{prop} 
\label{prop:obstr:Dirac}
  Let $(\pi_V,\Gamma,\omega_H)$ be the geometric data determined by a
  fiber non-degenerate almost Dirac structure $L$ on a fiber bundle
  $p:E\to B$. Then $L$ is a Dirac structure \emph{iff} the following conditions hold:
  \begin{enumerate}[i)]
    \item $\pi_V$ is a vertical Poisson structure: \[[\pi_V,\pi_V]=0.\]
    \item parallel transport along $\Gamma$ preserves the vertical Poisson structure: for any $v\in\X(B)$,
      \[ \Lie_{h(v)}\pi_V=0.\]
    \item the horizontal 2-form $\omega_H$ is $\Gamma$-closed: 
     \[ \d_\Gamma\omega_H=0.\] 
    \item the curvature is Hamiltonian: for any $u,v\in\X(B)$
      \begin{equation}
      \label{eq:curvature} 
      \Curv(u,v)=\pi_V^\#(\d i_{h(u)}i_{h(v)}\omega_H).
      \end{equation}
    \end{enumerate}
\end{prop}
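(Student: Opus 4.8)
The plan is to verify that $L$ closed under the Courant bracket is equivalent to conditions (i)--(iv) by evaluating the bracket on a convenient spanning set of sections of $L$. From Proposition \ref{prop:1-1fibernondeg:couplingD}, a fiber non-degenerate almost Dirac structure splits as $L=\graph(\omega_H)\oplus\graph(\pi_V)$, so sections of $L$ are generated by two families: \emph{horizontal sections} of the form $\sigma_h(v)=(h(v),\,i_{h(v)}\omega_H)$ for $v\in\X(B)$, and \emph{vertical sections} of the form $\sigma_v(\al)=(\pi_V^\sharp(\al),\al)$ for $\al\in\Hor^0=\Ver^*$. Since closure under $\llbracket~,~\rrbracket$ is a pointwise $C^\infty(E)$-linear condition on the bracket (as is standard for Dirac structures, the Courant bracket of two sections of $L$ lands in $\Gamma(L)$ iff it does so on a local frame), it suffices to compute the three brackets $\llbracket\sigma_v(\al),\sigma_v(\be)\rrbracket$, $\llbracket\sigma_h(v),\sigma_v(\al)\rrbracket$, and $\llbracket\sigma_h(u),\sigma_h(v)\rrbracket$ and demand that each lies in $L$.

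First I would treat the purely vertical bracket $\llbracket\sigma_v(\al),\sigma_v(\be)\rrbracket$. Because everything here is tangent to the fibers, this reduces to the well-known computation that $\graph(\pi_V)$ is involutive along the fibers iff $\pi_V$ is a Poisson tensor; this yields condition (i). Next, the mixed bracket $\llbracket\sigma_h(v),\sigma_v(\al)\rrbracket$ measures how the Lie derivative $\Lie_{h(v)}$ interacts with $\pi_V$: expanding the vector part gives $[h(v),\pi_V^\sharp(\al)]$, and requiring the result to again be of the form $\sigma_v(\cdot)$ (i.e.\ to have vanishing horizontal component and to be compatible with $\graph(\pi_V)$) forces parallel transport to preserve $\pi_V$, which is condition (ii). The most delicate computation is the horizontal bracket $\llbracket\sigma_h(u),\sigma_h(v)\rrbracket$. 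Its vector part is $[h(u),h(v)]$, which by definition of the curvature equals $h([u,v])+\Curv(u,v)$, so it has a horizontal piece governed by $[u,v]$ \emph{and} a genuinely vertical piece $\Curv(u,v)$. For the bracket to lie in $L=\graph(\omega_H)\oplus\graph(\pi_V)$, the horizontal piece must pair correctly with $\omega_H$ --- this is precisely the $\d_\Gamma$-closedness condition (iii) --- while the vertical piece $\Curv(u,v)$ must be realized as $\pi_V^\sharp$ of some covector; tracking which covector, using the formula \eqref{eq:2:form:alt} for $\omega_H$ and the Courant bracket \eqref{eq:Courant:bracket}, produces exactly the Hamiltonian-curvature identity \eqref{eq:curvature}, condition (iv).

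I expect the main obstacle to be bookkeeping in this last bracket: one must carefully separate horizontal and vertical components of both the vector and the covector part of $\llbracket\sigma_h(u),\sigma_h(v)\rrbracket$, and the $\d\langle~,~\rangle_-$ term in the Courant bracket will contribute cross-terms that have to be matched against $\d_\Gamma\omega_H$ and against $i_{\Curv(u,v)}(\cdot)$. The key technical tool is the identity $\d^2_\Gamma f=\Lie_{\Curv}f$ recalled just before the statement, which lets me convert the obstruction to $\d_\Gamma$ being a differential into the curvature term. A subtlety worth flagging is that conditions (ii) and (iii) are not fully independent of (iv) during the computation: the vertical projection in the horizontal bracket only simplifies to \eqref{eq:curvature} once one already knows $\Lie_{h(v)}\pi_V=0$, so I would establish (i) and (ii) first and then use them to clean up the horizontal bracket, reading off (iii) and (iv) simultaneously from the horizontal and vertical components respectively. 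Finally I would note that all four conditions are manifestly necessary, since each arises as the requirement that a specific bracket lie in $L$, so the equivalence is genuinely an ``iff''.
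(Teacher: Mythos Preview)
The paper does not actually prove this proposition: immediately after the statement it says ``A proof of Proposition \ref{prop:obstr:Dirac} can be found in \cite{BrFe}.'' So there is no in-paper argument to compare against directly.

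That said, your approach is correct and is the standard one; it is also exactly the computation the paper implicitly relies on later. In Proposition \ref{prop:splitbrackets} the authors compute the restricted Courant bracket on the same generating sections $h^*(v)=(h(v),i_{h(v)}\omega_H)$ and $\alpha\in\Gamma(\Ver^*)\simeq\graph(\pi_V)$, obtaining the formulas \eqref{eq:splitbracket1}--\eqref{eq:splitbracket3}, and dismiss the proof as a ``straightforward computation using \eqref{eq:Courant:bracket} and the identifications $\Hor^*\simeq\Ver^0$, $\Ver^*\simeq\Hor^0$.'' Your plan is precisely that computation run as an equivalence: the vertical-vertical bracket closes iff (i), the mixed bracket closes iff (ii), and the horizontal-horizontal bracket closes iff (iii) and (iv) together. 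One minor wording issue: you write that closure is a ``$C^\infty(E)$-linear condition on the bracket,'' which is not literally true of the Courant bracket; what you mean (and what is standard) is that the \emph{obstruction} $\langle\llbracket\cdot,\cdot\rrbracket,\cdot\rangle_+$ restricted to $\Gamma(L)^3$ is tensorial, so checking on a local frame suffices.
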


A proof of Proposition \ref{prop:obstr:Dirac} can be found in \cite{BrFe}. We shall refer to \eqref{eq:curvature} as the \textbf{curvature identity}.
 
\begin{defn}
We call $L$ a \textbf{coupling Dirac structure} on the total space of a fibration $p:E\to B$ if $L$ is fiber non-degenerate and Dirac. Equivalently, if the associated geometric data $(\pi_V,\Gamma_L,\omega_H)$ satisfies all four conditions of Proposition \ref{prop:obstr:Dirac}. 
\end{defn}

\subsection{Examples}
\label{sub:sec:examples}

The notion of coupling Dirac structure contains as special cases the notion of \textbf{coupling form} for symplectic fibrations (see, e.g., \cite{GLS}) and the \textbf{coupling Poisson tensor} considered by Vorbjev in \cite{Vor}. We now recall these examples as well as other ones.


\subsubsection{Coupling forms}                                                                                                                          %
\label{sub:sec:examples:1}                                                                                                                            %

Let $\omega$ be a closed 2-form on the total space of a fibration $p:E\to B$. The associated Dirac structure $L:=\graph(\omega)$ is fiber non-degenerate if and only if the pull-back of $\omega$ to each fiber is a non-degenerate 2-form. In this case, the fibration with the restriction of $\omega$ to the fibers becomes a symplectic fibration. The geometric data $(\pi_V,\Gamma,\omega_H)$ associated to $L$ has a non-degenerate vertical Poisson structure $\pi_V$ which coincides with the inverse of the restriction of $\omega$ to the fibers. 

The converse is also true: a fiber non-degenerate Dirac structure $L$ for which the the geometric data $(\pi_V,\Gamma,\omega_H)$ has a non-degenerate vertical Poisson structure $\pi_V$ is determined by a presymplectic form $\omega$. In fact, it follows from \eqref{eq:Dirac:components} that $L$ is the graph of the closed 2-form:
\[ \omega=\omega_H\oplus(\pi_V)^{-1}.\]

Hence, fiber non-degenerate presymplectic forms are the same as coupling forms for symplectic fibrations \cite{GLS}.

\subsubsection{Coupling Poisson structures}											                %
\label{sub:sec:examples:2}                               												        %

Let $\pi$ be a Poisson structure on the total space of a fibration $p:E\to B$. The Dirac structure $L=\graph(\pi)$ is fiber non-degenerate if and only if $\pi$ is horizontal non-degenerate in the sense of Vorobjev \cite{Vor}, i.e., if the bilinear form $\pi|_{\Vert^0}:\Vert^0\times\Vert^0\to\Rr$ is non-degenerate. This is precisely the condition for the fibers to be \emph{Poisson transversals}, so there is an induced Poisson structure on the fibers for which the fibration becomes a Poisson fibration. In terms of the associated geometric data $(\pi_V,\Gamma_L,\omega_H)$ the Poisson structure on the fibers is $\pi_V$ and $\omega_H$ is non-degenerate: in fact, $\pi$ induces an isomorphism $\Vert^0\to\Hor$ and, using this  isomorphism, we see that $\omega_H$ coincides with the restriction $\pi|_{\Vert^0}$.

The converse is also true: a fiber non-degenerate Dirac structure $L$ for which the horizontal 2-form $\omega_H$  is non-degenerate, is given by a Poisson structure $\pi$. In fact, it follows from
\eqref{eq:Dirac:components} that: 
\[ \pi=(\omega_H)^{-1}\oplus \pi_V.\]
Hence, fiber non-degenerate Poisson structures are the same thing as
the horizontal non-degenerate Poisson structures of Vorobjev \cite{Vor}.

\subsubsection{Neighborhood of a pre-symplectic leaf}										        %
\label{sub:sec:examples:3}                               												%

Let $L$ be any Dirac structure on a manifold $M$ and fix a presymplectic leaf $(S,\omega)$ of $L$. Then the restriction of $L$ to any sufficiently small tubular neighborhood $p:\nu(S)\to S$ of the leaf is a coupling Dirac structure. To see this, one observes that along $S$:
\[ L_x\cap (\nu(S)_x\oplus \nu_x(S)^0)=\{0\}, \quad \forall x\in S. \] 
It follows that $L$ is fiber non-degenerate on a sufficiently small neighborhood of the zero section. 

This shows that in a neighborhood of a presymplectic leaf the Dirac structure takes a special form and we can associate to it geometric data  $(\pi_V,\Gamma,\omega_H)$. The Poisson structure $\pi_V$ is the transverse Poisson structure along $S$, while $S$ (viewed as the zero section) is an integral leaf of $\Hor$ and the 2-form $\omega_H$ restricted to this leaf coincides with $\omega$. Note however that, in general, the distribution $\Hor$ fails to be integrable at other points.

\comment{oli: added this example, because it is the historical one:}
\subsubsection{Reduction of canonical bundles}

Let $P\to M$ be a principal $G$-bundle. The action of $G$ naturally lifts to an hamiltonian action of $G$ on $(T^*\!P,\omega_\can)$. Clearly, $T^*\!P$ is itself a principal $G$-bundle, sometimes called a \textbf{canonical bundle}, and it follows that the base manifold $T^*\!P/G$ has an induced Poisson structure $\pi$. 

Each choice of a principal bundle connection $\theta:TP\to \g$ induces a projection map $p_\theta:T^*\!P/G\to T^*\!M$. It is easy to check that, for any choice of connection,  the Dirac structure $L=\graph(\pi)$ on $E=T^*\!P/G$ is a coupling Dirac structure over $B=T^*\!M$.

\subsubsection{Finite dimensional Yang-Mills-Higgs phase spaces}					    			                                 %
\label{sub:sec:examples:4}                               												%

There is a construction using principal bundles and hamiltonian actions which leads to an important class of coupling Dirac structures.

\begin{defn}
A classical Yang-Mills-Higgs setting is a triple $(P,G,F)$ where $P\to B$ is a principal $G$-bundle and $(F,\pi_F)$ is a $G$-hamiltonian Poisson manifold with equivariant moment map $\mu_F:F\to\g^*$. 
\end{defn}

\begin{prop}
Let $(P,G,F)$ be a classical Yang-Mills-Higgs setting. Each choice of a principal bundle connection $\theta:TP\to \g$ determines a coupling Dirac structure on the associated fiber bundle $E:=P\times_G F\to B$.
\end{prop}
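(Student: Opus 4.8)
The strategy is to read off geometric data $(\pi_V,\Gamma,\omega_H)$ on the associated bundle $E=P\times_G F\to B$ directly from the triple and the connection, and then to apply Propositions~\ref{prop:1-1fibernondeg:couplingD} and~\ref{prop:obstr:Dirac}. For the vertical bivector I take the fibrewise Poisson structure $\pi_V$ obtained by transporting $\pi_F$ to each fibre; this is well defined precisely because the $G$-action preserves $\pi_F$. For the Ehresmann connection $\Gamma$ I take the one induced on the associated bundle by the principal connection $\theta$, so that the $\Gamma$-horizontal lift $h(v)$ is the image in $E$ of the $\theta$-horizontal lift on $P$, with the $F$-coordinate held fixed. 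Finally, writing $\Omega_\theta$ for the curvature $2$-form of $\theta$, I define the horizontal $2$-form on horizontal lifts by
\begin{equation*}
 \omega_H\bigl(h(u),h(v)\bigr):=\langle\mu_F,\Omega_\theta(u,v)\rangle,
\end{equation*}
the pointwise pairing at $[p,f]\in E$ of the moment map $\mu_F(f)\in\g^*$ against the $\g$-valued curvature $\Omega_\theta(u,v)$. Since $\mu_F$ and $\Omega_\theta$ are equivariant for the coadjoint and adjoint representations, this pairing is $G$-invariant and descends to a smooth $\omega_H\in\Omega^2(\Hor)$. By Proposition~\ref{prop:1-1fibernondeg:couplingD} these data determine a fiber non-degenerate almost Dirac structure $L$, and it remains to verify the four conditions of Proposition~\ref{prop:obstr:Dirac}.

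Conditions (i) and (ii) are immediate. That $\pi_V$ is Poisson, $[\pi_V,\pi_V]=0$, is inherited fibrewise from $[\pi_F,\pi_F]=0$. That parallel transport preserves $\pi_V$, i.e.\ $\Lie_{h(v)}\pi_V=0$, holds because $\Gamma$-parallel transport acts on the fibres through the $G$-action (this is how the induced connection on an associated bundle moves points), and the $G$-action preserves $\pi_F$.

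The substantive conditions are (iii) and (iv), and both hinge on the moment map. For the curvature identity (iv), I use the standard fact that the Ehresmann curvature $\Curv_\Gamma(u,v)$ of the induced connection is the fundamental vector field on the fibre generated by the principal curvature $\Omega_\theta(u,v)\in\g$. Because $(F,\pi_F)$ is Hamiltonian with moment map $\mu_F$, this fundamental vector field equals the Hamiltonian vector field $\pi_V^\#\bigl(\d\langle\mu_F,\Omega_\theta(u,v)\rangle\bigr)$; by the very definition of $\omega_H$ this is $\pi_V^\#\bigl(\d\,i_{h(u)}i_{h(v)}\omega_H\bigr)$, which is exactly \eqref{eq:curvature} (the signs are fixed by the moment-map convention, which one absorbs into the sign of $\omega_H$). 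For condition (iii), $\d_\Gamma\omega_H=0$, I note that under $\Gamma$-parallel transport the fibre coordinate $f$ is held fixed, so the factor $\mu_F$ in $\langle\mu_F,\Omega_\theta\rangle$ is covariantly constant and $\Lie_{h(u)}$ differentiates only the curvature, covariantly. Consequently the three Lie-derivative terms and the three bracket terms in the definition of $\d_\Gamma\omega_H$ assemble precisely into $\langle\mu_F,\d_\theta\Omega_\theta\rangle$, the pairing of $\mu_F$ with the covariant exterior derivative of the curvature, which vanishes by the Bianchi identity.

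The main obstacle is organizational rather than deep: one must set up carefully the dictionary expressing the induced Ehresmann connection on $E=P\times_G F$ and its curvature in terms of the principal connection $\theta$ and its curvature $\Omega_\theta$, and track how the $G$-equivariance of $\mu_F$, the defining relation $X_\xi=\pi_V^\#(\d\langle\mu_F,\xi\rangle)$ for the moment map, and the Bianchi identity interact. Once this dictionary is in place, conditions (iii) and (iv) follow from the Bianchi identity and the moment-map property respectively, essentially by the design of $\omega_H$.
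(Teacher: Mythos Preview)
Your proof is correct and follows a route that the paper itself acknowledges but does not detail. The paper's primary construction proceeds by Hamiltonian reduction: using the connection $\theta$ to embed $(\Ker\d p)^*$ into $T^*P$ equivariantly, pulling back the canonical Hamiltonian $G$-space $(T^*P,\omega_{\can},\mu_{\can})$, forming the product with $(F,\pi_F,\mu_F)$, and then identifying the Hamiltonian quotient $((\Ker\d p)^*\times F)/\!/G$ with $E=P\times_G F$; the reduced Dirac structure is the coupling $L$. Only afterwards does the paper write down the geometric data $(\pi_V,\Gamma,\omega_H)$---exactly as you did---and remark that ``one can also check easily that this triple satisfies the conditions in Proposition~\ref{prop:obstr:Dirac}''. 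Your argument \emph{is} that check.

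The comparison: the reduction approach has the virtue of exhibiting $L$ as an honest Dirac quotient, so closedness under the Courant bracket is inherited rather than verified axiom by axiom; it also situates the construction within the Dirac-reduction framework used elsewhere in the paper. Your direct verification is more elementary and self-contained, and makes transparent exactly which structural ingredients drive each coupling condition---most notably that (iv) is the moment-map identity in disguise and (iii) is the Bianchi identity paired against $\mu_F$. One small caveat: your claim that ``under $\Gamma$-parallel transport the fibre coordinate $f$ is held fixed'' is a statement in a local trivialization $P\times F$ rather than on $E$ itself, so the computation for (iii) should be phrased upstairs on $P\times F$ (with $h(u)$ lifted to $(\tilde u,0)$) and then descended by $G$-invariance; once said, the argument goes through as you wrote it.
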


The construction is well known (see \cite{BrFe2,Wein2,Wa}) so it will only be sketched here. 

First of all, the connection $\theta:TP\to \g$ gives a $G$-equivariant embedding $i_\theta:(\Ker \d p)^*\hookrightarrow T^*\! P$, where $p:P\to B$ is the principal bundle projection. This allows to pullback the hamiltonian $G$-space $(T^*\! P,\omega_\can,\mu_\can)$, where $\mu_\can:T^*\!P\to \gg^*$ is the dual of the infinitesimal action $\gg\to TP$, to obtain a Hamiltonian $G$-space $((\Ker \d p)^*,L_\theta,\mu_\theta))$. Here $L_\theta:=\graph(i_\theta^*\omega_\can)$ and $\mu_\theta:(\Ker \d p)^*\to\gg^*$ is the composition $\mu_\can\circ i_\theta$.

Next, we can combine the Hamiltonian $G$-spaces $((\Ker \d p)^*,L_\theta,\mu_\theta)$ and $(F,L_{\pi_F},\mu_F)$, where $L_{\pi_F}=\graph(\pi_F)$, to obtain a Hamiltonian $G$-space $((\Ker \d p)^*\times F,L_\theta\times L_{\pi_F},\mu_\theta+\mu_F)$, where $G$ acts diagonally on $(\Ker \d p)^*\times F$.

Finally, observe that the Hamiltonian quotient:
\[ ((\Ker \d p)^*\times F)/\!/G:=\{(v,f)\in (\Ker \d p)^*\times F: \mu_\theta(u)+\mu_F(f)=0\}/G \]
is isomorphic to $E:=P\times_G F$: the map $[(v,f)]\mapsto [(u,f)]$, where $v\in \Ker\d_u p$, gives the desired isomorphism. It follows that $E$ has a quotient Dirac structure $L$ and one checks that this is indeed a coupling Dirac structure for the fibration $E\to B$.

The associated coupling Dirac structure can be described as follows. Since $G$ acts on $F$ by Poisson automorphisms, the associated bundle $E:=P\times_G F$ has an induced vertical Poisson structure $\pi_V$ with typical fiber $(F,\pi_F)$. The induced connection $\Gamma$ on $E$ is a Poisson connection. If one denotes by  $\omega_\theta\in\Omega^2(B,\g)$ the curvature of the principal connection $\theta:TP\to\g$, one obtains a well defined horizontal $2$-form $\omega_H$ on $E$ by setting:
\[ \omega_H(h(v_1),h(v_2)):=\langle \mu_F,\omega_\theta(v_1,v_2)\rangle. \]
The triple $(\pi_V,\Gamma,\omega_H)$ is the geometric data associated with $L$. One can also check easily that this triple satisfies the conditions in proposition \eqref{prop:obstr:Dirac}.

Dirac structures obtained in this way are sometimes called classical \textbf{Yang-Mills-Higgs phase spaces}. We will be interested in the problem of integrability of such structures. In particular, the integrability of $(F,\pi_F)$ is not enough to ensure the integrability of the associated  bundle, as shown in the following simple example below.

\begin{ex}\label{ex:Hopf}
Consider the Hopf fibration $P=\mathbb{S}^3\to \mathbb{S}^2$, seen as a $\mathbb{S}^1$-principal bundle. One can choose a principal connection $\theta$ whose curvature is given by $\omega_\theta=p^*\omega$, where $\omega$ is the standard symplectic form on $\mathbb{S}^2$. Consider furthermore $F=\mathbb{R}$ endowed with the trivial Poisson structure $\pi_F=0$, and let $\mathbb{S}^1$ act trivially on $F$ with momentum map $f:F\to \mathbb{R}$ any smooth function.

Then the associated bundle is trivial $E=P\times_{\mathbb{S}^1} F=\mathbb{S}^2\times \mathbb{R}$, and it is easily checked that the induced coupling Dirac structure has presymplectic leaves $(\mathbb{S}^2\times\{x\}, f(x).\omega)$. Here, the associated gemetric data is given by $(\pi_V,\Hor,\omega_H)$, where $\pi_V=0$, $\Hor$ is the flat connection given by the trivialization,  and $\omega_H:=p^*\omega\cdot f$.

Although $\pi_V$ is integrable, it follows that the coupling Dirac structure $L$ is not integrable whenever $f$ has some critical point (see \cite{CrFe1}).
\end{ex}

\subsection{Poisson Gauge Theory}					    			                  %
\label{sub:sec:examples:5}                           	%
General coupling Dirac structures can be seen as infinite dimensional Yang-Mills-Higgs phase spaces. This was observed in \cite{BrFe} and often offers guidance on how to extend constructions which work for a Yang-Mills-Higgs phase space to a general coupling Dirac structure. We will use this principle later, so we will recall here this Poisson gauge theory. The discussion is mainly heuristic, since we do not want to deal with issues related with infinite dimensional Fr\'echet spaces.

Given a coupling Dirac structure $L$ on a fibre bundle $p:E\to B$ with associated geometric data $(\pi_V,\Gamma_L,\omega_H)$. We denote the fiber type by $(F,\pi_F)$ and we form the \textbf{Poisson frame bundle} over $B$, whose fiber over $b\in B$ is:
\[ P:=\set{ u:(F,\pi)\to (E_b,\pi_V): u\text{ is a Poisson diffeomorphism}}. \]
The group $G=\Diff(F,\pi_F)$ acts on (the right of) $P$ by pre-composition: 
\[ P\times G\to P:~(u,g)\mapsto u \circ g.\]
Then our original Poisson fiber bundle is canonically isomorphic to the associated fiber bundle: $E=P\times_G F$.

The Poisson connection $\Gamma_L$ determines a principal bundle connection on $P\to B$. To see this, observe that the tangent space $T_u P\subset C^\infty(u^*TE)$ at
a point $u\in P$ is formed by the vector fields along $u$, $X(x)\in T_{u(x)}E$, such that:
\[ d_{u(x)}p\cdot X(x)=\text{constant},\quad \Lie_X\pi_V=0.\]
The Lie algebra $\gg$ of $G$ is the space of Poisson vector fields: $\gg=\X(F,\pi)$. The infinitesimal action on $P$ is given by:
\[ \rho:\gg\to\X(P),\quad \rho(X)_u= \d u\cdot X,\]
so the vertical space of $P$ is:
\[ \Vert_u=\set{\d u\cdot X: X\in \X(F,\pi)}.\]
Now a Poisson connection $\Gamma_L$ on $p:E\to B$ determines a connection in $P\to B$ whose horizontal space is:
\[ \Hor_u=\set{\widetilde{v}\circ u: v\in T_bB},\]
where $u:F\to F_b$ and $\widetilde{v}:F_b\to T_{F_b }M$ denotes the horizontal lift of $v$.  Clearly, this defines a principal bundle
connection $\theta:TP\to\gg$ whose induced connection on the associated bundle $E=P\times_G F$ is the original Poisson connection $\Gamma_L$. 

Recall that the holonomy group $\Phi(b)$ with base point $b\in B$ is the group of holonomy transformations $\phi_\gamma:F_b\to F_b$,
where $\gamma$ is a loop based at $b$. Clearly, we have $\Phi(b)\subset\Diff(F_b,\pi_b)$. On the other hand, for $u\in P$ we
have the holonomy group $\Phi(u)\subset G=\Diff(F,\pi)$ of the corresponding connection in $P$ which induces $\Gamma_L$: it consist of
all elements $g\in G$ such that $u$ and $ug$ can be joined by a horizontal curve in $P$. These two groups are isomorphic, for if
$u:F\to F_b$ then:
\[ \Phi(u)\to \Phi(b),\ g\mapsto u\circ g\circ u^{-1},\]
is an isomorphism. 

The curvature of a principal bundle connection $\theta$ is a $\gg$-valued
2-form $\Theta$ on $P$ which transforms as:
\[ R_g^*\Theta=\Ad(g^{-1})\cdot \Theta,\quad (g\in G).\]
Therefore, we can also think of the curvature as a 2-form $\omega_\theta$
with values in the adjoint bundle $\gg_P:=P\times_G\gg$. In the case of the
Poisson frame bundle, the adjoint bundle has fiber over $b$ the space
$\X(F_b,\pi_b)$ of Poisson vector fields on the fiber. Hence the
curvature of our Poisson connection can be seen as a 2-form
$\omega_\theta:T_bB\times T_bB\to\X(F_b,\pi_b)$. The two curvature
connections are related by:
\begin{equation}
\label{eq:curv:relations}
\omega_\theta=\d u\circ \Theta\circ u^{-1}.
\end{equation}
Finally, using the curvature identity \eqref{eq:curvature}, we conclude that
\[ 
\Theta(v_1,v_2)_u=
\pi^{\#}\d (\omega_H(\widetilde{v}_1,\widetilde{v}_2)\circ u).
\]

If we fix $u_0\in P$, the Holonomy Theorem states that the Lie algebra
of the holonomy group $\Phi(u_0)$ is generated by all values
$\Theta(v_1,v_2)_u$, with $u\in P$ any point that can be connected
to $u_0$ by a horizontal curve. Hence, we can define a moment map
$\mu_F:F\to (\text{Lie}(\Phi(u_0))^*)$ for the action of $\Phi(u_0)$
on $F$ by:
\begin{equation}
\label{eq:moment:map}
\left\langle \mu_F(x),\Theta(v_1,v_2)_u\right\rangle
=\omega_H(\widetilde{v}_1,\widetilde{v}_2)_{u(x)}.
\end{equation}
This shows that the action of $\Phi(u_0)$ on $(F,\pi)$ is Hamiltonian.

We can now consider the principal bundle $\widetilde{P}\to B$ obtained from the Poisson frame bundle by reduction of the structure group $G$ to $\Phi(u_0)$. On
$\widetilde{P}\to B$ we have an induced connection $\omega_\theta$ and the action of $\Phi(u_0)$ on $(F,\pi)$ is hamiltonian with moment map $\mu_F:F\to (\text{Lie}(\Phi(u_0))^*)$. Then we obtain:

\begin{prop}
The Yang-Mills-Higgs phase space associated with the triple $(\widetilde{P},\Phi(u_0),F)$ is isomorphic to the original coupling Dirac structure $(E,L)$.
\end{prop}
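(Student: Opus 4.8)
The plan is to reduce everything to the $1$-$1$ correspondence of Proposition~\ref{prop:1-1fibernondeg:couplingD}: since both $(E,L)$ and the Yang-Mills-Higgs phase space associated with $(\widetilde{P},\Phi(u_0),F)$ are coupling Dirac structures on fibrations over $B$, it suffices to exhibit an isomorphism of the underlying fibrations under which the two sets of geometric data coincide. Concretely, I would show that the geometric data produced by the Yang-Mills-Higgs construction applied to $(\widetilde{P},\Phi(u_0),F,\mu_F)$ is exactly the original triple $(\pi_V,\Gamma_L,\omega_H)$, and then invoke the correspondence to conclude that the two Dirac structures agree.

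First I would identify the total spaces. Because $\widetilde{P}\to B$ is obtained from the Poisson frame bundle $P\to B$ by reducing the structure group from $G=\Diff(F,\pi_F)$ to the holonomy group $\Phi(u_0)$, we have $P\cong\widetilde{P}\times_{\Phi(u_0)}G$, whence the standard associated-bundle identity
\[
\widetilde{P}\times_{\Phi(u_0)}F\;\cong\;\widetilde{P}\times_{\Phi(u_0)}(G\times_G F)\;\cong\;P\times_G F\;=\;E
\]
gives a canonical isomorphism of fibrations over $B$. Under this identification the vertical Poisson structure of the Yang-Mills-Higgs construction is, by definition, the one induced by the $\Phi(u_0)$-action on $(F,\pi_F)$; since $\Phi(u_0)\subset G$ acts by Poisson automorphisms and $E=P\times_G F$ already carries $\pi_V$ as its fiberwise Poisson structure, the two agree. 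Likewise, the principal connection $\theta$ on $P$ was constructed precisely so that its induced connection on $E=P\times_G F$ is $\Gamma_L$; the reduced connection $\omega_\theta$ on $\widetilde{P}$ induces the same Ehresmann connection on the associated bundle with fiber $F$, so the connection datum is again $\Gamma_L$.

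It remains to match the horizontal $2$-forms. The Yang-Mills-Higgs construction produces $\omega_H^{\mathrm{YMH}}(h(v_1),h(v_2))=\langle\mu_F,\omega_\theta(v_1,v_2)\rangle$, where $\omega_\theta$ is the curvature viewed as a $2$-form with values in the adjoint bundle. Using the relation \eqref{eq:curv:relations} between $\omega_\theta$ and the principal curvature $\Theta$, together with the defining formula \eqref{eq:moment:map} of $\mu_F$, this pairing evaluates at a point $u(x)\in E$ to $\omega_H(\widetilde{v}_1,\widetilde{v}_2)_{u(x)}$, which is exactly the original horizontal $2$-form. Hence all three pieces of geometric data coincide and the two coupling Dirac structures are isomorphic.

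The main obstacle, and the step deserving the most care, is the well-definedness and equivariance of the moment map $\mu_F:F\to(\mathrm{Lie}\,\Phi(u_0))^*$ defined by \eqref{eq:moment:map}. By the Ambrose--Singer Holonomy Theorem the curvature values $\Theta(v_1,v_2)_u$ generate $\mathrm{Lie}\,\Phi(u_0)$, so \eqref{eq:moment:map} does prescribe a functional on a spanning set; what must be checked is that the prescription is consistent, i.e. that any linear relation among the curvature values is matched by the corresponding relation among the values $\omega_H(\widetilde{v}_1,\widetilde{v}_2)_{u(x)}$. This consistency is precisely what the curvature identity \eqref{eq:curvature} of Proposition~\ref{prop:obstr:Dirac}(iv) guarantees, since it expresses $\Curv_\Gamma$ as the Hamiltonian vector field of the function $i_{h(u)}i_{h(v)}\omega_H$. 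Equivariance of $\mu_F$ then follows from the transformation rule $R_g^*\Theta=\Ad(g^{-1})\Theta$ of the curvature. As throughout this subsection, the infinite-dimensional Fr\'echet issues attached to $G=\Diff(F,\pi_F)$ are handled only heuristically.
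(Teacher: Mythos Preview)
Your argument is correct and follows exactly the paper's approach: the paper's own proof is the single sentence ``By construction, the geometric data associated with the Yang-Mills-Higgs phase space of the triple $(\widetilde{P},\Phi(u_0),F)$ is precisely $(\pi_V,\Gamma_L,\omega_H)$,'' and you have simply unpacked what ``by construction'' means, matching each piece of the geometric data and then invoking Proposition~\ref{prop:1-1fibernondeg:couplingD}. Your additional discussion of the well-definedness and equivariance of $\mu_F$ via \eqref{eq:curvature} and the Holonomy Theorem makes explicit a point the paper leaves to the heuristic spirit of the subsection.
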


\begin{proof}
By construction, the geometric data associated with the Yang-Mills-Higgs phase space of the triple $(\widetilde{P},\Phi(u_0),F)$ is precisely $(\pi_V,\Gamma_L,\omega_H)$.
\end{proof}

Hence \emph{any} coupling Dirac structure on a fibre bundle can be seen as a Yang-Mills-Higgs phase space, provided we allow for infinite dimensional structure groups.

\subsection{Coupling Dirac structures as extensions}

The Yang-Mills-Higgs approach to coupling Dirac structures has the disadvantage that one must allow for infinite dimensional structure groups. Since our purpose is to integrate coupling Dirac structures to presymplectic groupoids, this is problematic. An alternative approach is to observe that coupling Dirac structures give rise to Lie algebroid extensions:

\begin{prop}
\label{prop:Dirac:extension}
Let $L$ be a coupling Dirac structure on the fibration $p:E\to B$. The morphism $p_*\circ \sharp:L\to TB$ induces a Lie algebroid extension:                                                                  %
\begin{equation}
\label{eq:Dirac:extension}    
\SelectTips{cm}{}
\xymatrix@C=15pt{ \graph{\pi_V}\ \ar@{^{(}->}@<-0.25pt>[r]& L \ar@{->>}[r]<-0.25pt> &   TB      .\quad\quad\quad}
\end{equation} 
Moreover, the decompositon \eqref{eq:Dirac:components} induces an Ehresman connection with horizontal space $\graph(\omega_H)$, namely we have:            %
\begin{equation}\label{eq:Dirac:components2}  L=\graph(\pi_V)\oplus \graph(\omega_H).\end{equation}  %
\end{prop}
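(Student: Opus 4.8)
The plan is to verify the three defining features of a Lie algebroid extension for the sequence \eqref{eq:Dirac:extension}—that it is exact as a sequence of vector bundles, that both arrows are Lie algebroid morphisms, and that $\graph(\pi_V)$ is the kernel and hence a subalgebroid—and then to read off the splitting \eqref{eq:Dirac:components2} from the explicit description \eqref{eq:Dirac:components}. Throughout I write $\phi:=p_*\circ\sharp\colon L\to TB$ and use the splitting $TE=\Hor\oplus\Ver$ together with the induced identifications $\Hor^0=\Ver^*$, $\Ver^0=\Hor^*$.

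First I would observe that $\phi$ is a Lie algebroid morphism over $p$ essentially for free: the anchor $\sharp\colon L\to TE$ is a Lie algebroid morphism over $\id_E$ (this is just the defining compatibility $\sharp\llbracket s_1,s_2\rrbracket=[\sharp s_1,\sharp s_2]$ of a Lie algebroid), while $p_*\colon TE\to TB$ is a Lie algebroid morphism over $p$ (since $p$-relatedness of vector fields is preserved by the Lie bracket), so the composite $\phi$ is a morphism over $p$. Exactness is then a direct computation with \eqref{eq:Dirac:components}: a general element of $L$ has the form $(X+\pi_V^\sharp(\al),\,i_X\omega_H+\al)$ with $X\in\Hor$ and $\al\in\Hor^0$, and applying $\phi$ annihilates the vertical part $\pi_V^\sharp(\al)\in\Ver=\ker p_*$ while sending $X\in\Hor$ isomorphically onto $T_bB$ via $p_*|_\Hor$. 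Hence $\phi$ is fibrewise surjective, and its kernel consists exactly of the elements with $X=0$, that is $\graph(\pi_V)=\set{(\pi_V^\sharp(\al),\al):\al\in\Hor^0}$.

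Next I would argue that $\graph(\pi_V)$ is a Lie subalgebroid. Being the kernel of the morphism $\phi$ and of constant rank $\dim F$, it is closed under the restricted bracket: every section of $\graph(\pi_V)$ is $\phi$-related to the zero section of $TB$, so the bracket of two such sections is again $\phi$-related to $0$ and therefore lies in $\graph(\pi_V)$. With anchor $\sharp|_{\graph(\pi_V)}=\pi_V^\sharp$ and the induced bracket it is precisely the vertical cotangent Lie algebroid of the Poisson fibres, which is well defined because $[\pi_V,\pi_V]=0$ by Proposition \ref{prop:obstr:Dirac}(i). The one point that needs care here, and which I regard as the main obstacle, is that $\phi$ covers the submersion $p$ rather than the identity: running the Leibniz rule $[s,ft]_L=f[s,t]_L+(\sharp(s)\cdot f)\,t$ shows that $[\Gamma(\graph\pi_V),\Gamma(L)]$ need \emph{not} land in $\Gamma(\graph\pi_V)$, so $\graph(\pi_V)$ is an ideal only fibrewise and I would phrase the whole extension relative to $p$ rather than claim a strict ideal.

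Finally, for the Ehresmann connection I would verify the splitting \eqref{eq:Dirac:components2} directly. Writing $(X+\pi_V^\sharp(\al),\,i_X\omega_H+\al)=(X,i_X\omega_H)+(\pi_V^\sharp(\al),\al)$ exhibits every element of $L$ as a sum from $\graph(\omega_H)$ and $\graph(\pi_V)$, and the sum is direct: if $(X,i_X\omega_H)=(\pi_V^\sharp(\al),\al)$ then $X=\pi_V^\sharp(\al)$ lies in $\Hor\cap\Ver=\{0\}$, forcing $X=0$ and then $\al=i_X\omega_H=0$. A rank count ($\dim B+\dim F=\dim E$) confirms $L=\graph(\pi_V)\oplus\graph(\omega_H)$. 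Since $\phi$ restricts to an isomorphism $\graph(\omega_H)\xrightarrow{\sim}TB$, with inverse the splitting $v\mapsto(h(v),i_{h(v)}\omega_H)$ built from the horizontal lift, the complement $\graph(\omega_H)$ is a horizontal lift of $TB$ into $L$, i.e.\ an Ehresmann connection for the extension. Apart from the bookkeeping about morphisms over $p$ noted above, every step is a transparent linear-algebra computation from \eqref{eq:Dirac:components}.
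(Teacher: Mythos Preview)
Your proof is correct and follows essentially the same route as the paper: both argue that $p_*\circ\sharp$ is a composition of Lie algebroid morphisms, check surjectivity (the paper via $\Hor\subset\im\sharp$, you via the explicit parametrisation \eqref{eq:Dirac:components}), identify the kernel as $\graph(\pi_V)$, and then observe that \eqref{eq:Dirac:components2} provides a complementary subbundle. Your discussion of whether $\graph(\pi_V)$ is an ideal is a careful but unnecessary detour: the notion of Lie algebroid extension used here (from \cite{Br}) only requires a surjective Lie algebroid morphism covering a surjective submersion, so the kernel is automatically a subalgebroid and no ideal condition is needed.
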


\begin{proof}
The map $p_*\!\circ\! \sharp:L\to TB$ is clearly a Lie algebroid morphism, being the composition of algebroid morphisms. Since $\Hor\subset\im\sharp$, it follows that $p_*$ is surjective and covers the surjective submersion $p:E\to B$. By its very definition (see \cite{Br}), we obtain a Lie algebroid extension with kernel $\ker (\d p\circ\sharp)$. The fiber non-degeneracy condition, shows that: 
\[ \ker p_*\circ \sharp=\sharp^{-1}(\Ver)=\graph(\pi_V). \]
Hence, the kernel is exactly $\graph(\pi_V)\subset \Ver^*\oplus\Ver$. The decomposition \eqref{eq:Dirac:components2} gives a complementary vector subundle to this kernel, i.e., an Ehresmann connection in the sense of \cite{Br}.
\end{proof}

For a Lie algebroid extension which is split, as in \eqref{eq:Dirac:components2}, there is a natural decomposition of its Lie bracket \cite[lem.\,1.8]{Br}.  

Firstly, we may identify $\graph(\pi_V)$ with $\Ver^*$, so sections of $\graph(\pi_V)$ get identified with vertical forms. Vertical forms  $\alpha,\,\beta\in \Gamma(\Ver^*)$ come naturally equipped with a bracket and an anchor inherited from $\pi_V$:
\begin{align}\sharp_V(\alpha)&:=\pi_V^\sharp(\alpha),\label{eq:vertical:anchor}\\
[\al,\be]_{V}&:=\Lie_{\sharp_V(\al)}\be-\Lie_{\sharp_V(\be)}\al-\d_V\pi_V(\al,\be),\label{eq:vertical:bracket}
\end{align}
where $\d_V:C^\infty(E)\to \Gamma(\Ver^*)$ denotes the vertical De Rham differential. Since $\pi_V$ is Poisson, this makes $\Ver^*$ into a Lie algebroid.

Secondly, we have the horizontal lift of  $v\in \X(B)$: it is defined as the unique section $h^*(v)$ of $\graph(\omega_H)$ such that $\d p\circ \sharp(h^*(v))=v$. Since the restriction of $\sharp$ gives an isomorphism $\sharp:\graph(\omega_V)\to\Hor$, we first have to lift $v$ into $h(v)\in \Gamma(\Hor)$ and then invert' $\sharp$, which leads to:
\[h^*(v)=(h(v),i_{h(v)}\omega_H)\in \Gamma(\graph\,\omega_H). \]
We shall refer to $h^*:\X(B)\to \Gamma(\graph\omega_H)$ as the \textbf{co-horizontal lifting map}. Note that this is a $C^\infty(B)$-linear map.

Sections of $L$ are generated by sections $\alpha\in\Gamma(\Ver^*)$ and $h^*(v)$, for $v\in\X(B)$, so the Lie bracket on $L$ is entirely determined by its value on these two types of sections:

\begin{prop}[Splitting Brackets]
\label{prop:splitbrackets} 
Let $L$ be a coupling Dirac structure on $E\to B$. Under the decomposition \eqref{eq:Dirac:components2} the Lie bracket of $L$ satisfies:
\begin{align}
    \left[\alpha,\beta\right]_L&= \left[\alpha,\beta\right]_{_V}                 \label{eq:splitbracket1}\\
    \left[h^*(v),\alpha\right]_L&=\mathcal{L}_{h(v)}\alpha                          \label{eq:splitbracket2} \\
    \left[h^*(v),h^*(w)\right]_L&=h^*([v,w])+\d_V\omega_H(h(v),h(w))      \label{eq:splitbracket3}
\end{align}
while the anchor takes the form:
\begin{equation}
\label{eq:splitbracket0}
\sharp(h^*(v)+\alpha)=h(v)+\sharp_V(\alpha) 
\end{equation}
for any elements $v,w\in\X(B)$ and $\alpha, \beta\in \Gamma(\Ver^*)$.
\end{prop}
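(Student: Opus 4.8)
The plan is to compute the Courant bracket \eqref{eq:Courant:bracket} directly on the two types of generating sections --- the vertical forms $\alpha\in\Gamma(\Ver^*)$, identified with $(\pi_V^\sharp\alpha,\alpha)\in\graph(\pi_V)$, and the co-horizontal lifts $h^*(v)=(h(v),i_{h(v)}\omega_H)\in\graph(\omega_H)$ --- and then to project each result onto the splitting $L=\graph(\pi_V)\oplus\graph(\omega_H)$ of \eqref{eq:Dirac:components2}, reading off the $\graph(\pi_V)$-component as a vertical form and the $\graph(\omega_H)$-component as a co-horizontal lift. The anchor formula \eqref{eq:splitbracket0} is immediate: $\sharp$ is the projection onto $TE$, so $\sharp(\alpha)=\pi_V^\sharp\alpha=\sharp_V\alpha$ and $\sharp(h^*(v))=h(v)$, and $\sharp$ is linear. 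Throughout I would repeatedly exploit that $\omega_H$ is horizontal (it vanishes whenever one argument is vertical) and that vertical forms vanish on $\Hor$, which forces most of the pairing terms $\langle~,~\rangle_-$ appearing in \eqref{eq:Courant:bracket} to drop out.

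For \eqref{eq:splitbracket1}, I would first record that the pairing of $(\pi_V^\sharp\alpha,\alpha)$ with $(\pi_V^\sharp\beta,\beta)$ equals $-\pi_V(\alpha,\beta)$, so that the $1$-form part of the Courant bracket is $\Lie_{\pi_V^\sharp\alpha}\beta-\Lie_{\pi_V^\sharp\beta}\alpha-\d\,\pi_V(\alpha,\beta)$. Because condition (i) says $\pi_V$ is Poisson, the vector part $[\pi_V^\sharp\alpha,\pi_V^\sharp\beta]$ is again vertical, so the bracket lands in $\graph(\pi_V)$ and may be restricted to $\Ver$; there $\d$ becomes $\d_V$ and the whole expression is exactly the fiberwise Koszul bracket $[\alpha,\beta]_V$ of \eqref{eq:vertical:bracket}. (Equivalently, one invokes that $\graph(\pi_V)$ is the kernel of the extension of Proposition \ref{prop:Dirac:extension}, whose restriction to a fiber $F_b$ is the cotangent Lie algebroid of $(F_b,\pi_V|_{F_b})$.)

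For \eqref{eq:splitbracket2} I would check that the pairing of $h^*(v)$ with $\alpha$ vanishes --- both $i_{h(v)}\alpha$ and $\omega_H(h(v),\pi_V^\sharp\alpha)$ are zero by verticality and horizontality --- leaving the $1$-form part $\Lie_{h(v)}\alpha-\Lie_{\pi_V^\sharp\alpha}(i_{h(v)}\omega_H)$. Evaluating on a vertical vector shows the second term contributes nothing (again by horizontality of $\omega_H$ and integrability of $\Ver$), so the vertical part is $\Lie_{h(v)}\alpha$; using condition (ii), $\Lie_{h(v)}\pi_V=0$, the Leibniz rule gives $[h(v),\pi_V^\sharp\alpha]=\pi_V^\sharp(\Lie_{h(v)}\alpha)$, confirming the bracket lies in $\graph(\pi_V)$ and equals $\Lie_{h(v)}\alpha$ under the identification $\graph(\pi_V)\cong\Ver^*$.

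The main work --- and the step I expect to be the real obstacle --- is \eqref{eq:splitbracket3}. The vector part of the Courant bracket is $[h(v),h(w)]=h([v,w])+\Curv(v,w)$ and the pairing produces $\d\,\omega_H(h(v),h(w))$, so I must show the full $1$-form part equals $i_{h([v,w])}\omega_H+\d_V\omega_H(h(v),h(w))$ while simultaneously $\Curv(v,w)=\pi_V^\sharp(\d_V\omega_H(h(v),h(w)))$, this last being precisely the curvature identity \eqref{eq:curvature}, i.e.\ condition (iv). The form identity I would verify by testing against horizontal and vertical vectors separately: on a horizontal lift $h(u)$ it reduces, after expanding the Lie derivatives and discarding curvature terms killed by $\omega_H$, exactly to $\d_\Gamma\omega_H(u,v,w)=0$, which is condition (iii); on a vertical vector it collapses to $0=0$ once one uses the crucial fact that the flow of a horizontal lift carries fibers to fibers, so that $[h(v),Z]$ is vertical for every vertical $Z$ and is therefore annihilated by the horizontal form $\omega_H$. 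The delicate points are the bookkeeping in splitting $\d$ into horizontal and vertical parts --- so that the scalar curvature identity upgrades to the asserted \emph{form} identity --- and the correct reading of the right-hand sides of \eqref{eq:splitbracket2}--\eqref{eq:splitbracket3} through the identification $\graph(\pi_V)\cong\Ver^*$; invoking conditions (iii) and (iv) at the right moment then yields all three formulas.
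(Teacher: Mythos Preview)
Your proposal is correct and is precisely the ``straightforward computation using \eqref{eq:Courant:bracket} and the identifications $\Hor^*\simeq \Ver^0$, $\Ver^*\simeq\Hor^0$'' that the paper invokes as its entire proof. You have simply made explicit what the paper leaves implicit, including the r\^oles of conditions (i)--(iv) of Proposition~\ref{prop:obstr:Dirac} in the three bracket identities.
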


\begin{proof}
Straightforward computation using \eqref{eq:Courant:bracket} and the identifications $\Hor^*\simeq \Ver^0$, $\Ver^*\simeq\Hor^0$.
\end{proof}

In particular, we see from the equation \eqref{eq:splitbracket3} that the curvature of $\graph(\omega_H)$ as an Ehresmann connection on the extension $L$ is given by:
\[ ( \pi_V^\sharp\d_V\omega_H, \d_V\omega_H)\in \Omega^2(B,\Gamma(\graph(\pi_V))). \]
This is just another way of expressing the curvature identity \eqref{eq:curvature}.

\section{Integration of coupling Dirac structures I}%

As stated in the Introduction, our main aim in this paper is to understand the integration of coupling Dirac structures. 
In this section we take care of the symplectic geometry, showing that a source connected integration of any integrable coupling Dirac structure is a presymplectic groupoid whose 2-form is itself a coupling form. 

\subsection{The Weinstein groupoid and $A$-homotopies}                                                                      

We start by recalling some facts about the integrability of Lie algebroids that will be needed later on. In the sequel, we will denote by $\G\tto M$ a Lie groupoid, with source and target maps $\s,\t:\G\to M$, identity section $\iota:M\to \G$, $m\mapsto {\bf 1}_m$,
and inversion $i:\G\to\G$, $x\mapsto x^{{\bf -1}}$. The composition of two arrows, denoted by
$x\cdot y$, is only defined provided $\s(x)=\t(y)$.

We will denote by $p_A:A\to M$ a Lie algebroid with Lie bracket $[~,~]_A$ 
and anchor $\sharp:A\to TM$. Given a Lie groupoid $\G$, the corresponding Lie algebroid has underlying vector bundle
$A(\G):=\ker\d_{\iota(M)}\s$ and anchor $\sharp:=\d_{\iota(M)}\t$. The sections of 
$A(\G)$ can be identified with the right invariant vector fields on $\G$, and this 
determines the Lie bracket on sections of $A(\G)$. A groupoid that arises in this way is called \emph{integrable}.

Not every Lie algebroid $p_{\!\,A}:A\to M$ is integrable. However, there always exists  a topological groupoid 
$\G(A)$ with source 1-connected fibers, that formally 'integrates' $A$. It is called the \textbf{Weinstein
groupoid} of $A$. Then $A$ is integrable if and only if $\G(A)$ is smooth, in which case $A(\G(A))$ is canonically isomorphic to $A$.

Let us recall briefly the construction  of $\G(A)$. More details can be found in \cite{CrFe3}. An \textbf{$A$-path} is a path $a:I\to A$ such that:
\[ \#a(t)=\frac{\d}{\d t}p_A(a(t)). \]
We will denote by $P(A)$ the space of $A$-paths (up to reparameterization). We set $\s(a):=p_{\!A}\circ\, a(0)$ and $\t(a):=p_{\! A}\circ\, a(1)$, and we will think of $P(A)\tto M$ as an infinite dimensional groupoid with multiplication given by concatenation (although units and inverse are not well defined). On the space $P(A)$, there is an equivalence relation $\sim$, called $A$-homotopy, that preserves the multiplication. The quotient of $P(A)$ by $A$-homotopies, usually called the Weinstein groupoid, will be denoted by:
\[ \G(A):=P(A)/\!\sim . \] 
Given an $A$-path $a$, we usually denote its $A$-homotopy class by $[a]_A$, or simply $[a]$ when no confusion seems possible.

Let us recall how $A$-homotopies are defined. Suppose that we 
are given $\alpha^\epsilon$(t), a time dependent family of sections of $A$ depending on a parameter $\epsilon\in I:=[0,1]$, and $\beta^0(\epsilon)$ a time dependent section of $A$.  
Then there exists a unique solution $\beta=\beta^t(\epsilon)$ of the following \textbf{evolution equation}:
\begin{equation}
\label{evolution}
\frac{\d\alpha}{\d\epsilon}-\frac{\d\beta}{\d t}=[\alpha,\beta],
\end{equation}
with initial condition $\beta^0(\epsilon)$. In fact, it is easily checked that the following integral formula provides a solution:
\begin{equation}
\label{hintegral}\beta^t(\epsilon):=\int_0^t(\psi^{\alpha^\epsilon}_{t,s})_*\bigl(\,\frac{\d}{\d\epsilon}\alpha^\epsilon(s)\,\bigr)\d s+(\psi_{t,0}^{\alpha^\epsilon})_*(\beta^0(\epsilon)).
\end{equation}
Here $\psi^{\alpha^\epsilon}_{t,s}$ denotes the flow of the time dependent linear vector field on $A$ associated with the derivation $[\alpha^\epsilon,-]_A$ (see \cite{CrFe3} for more details). We emphasize the use of the indices and parameters in the notation: we think of $\alpha$ as an $\epsilon$-family of $t$-time dependent sections of $A$, while we think of $\beta$ as a $t$-family of $\epsilon$-time dependent sections of $A$ (see why below).

The notion of $A$-homotopy is defined as follows: a family $a^\epsilon:I\mapsto A,\ \epsilon\in I$ of $A$-paths, over $\gamma^\epsilon:I\mapsto B$ is called an \textbf{$A$-homotopy} 
if and only the unique solution $\beta$ of equation \eqref{evolution} with initial condition $\beta^0(\epsilon)=0$ satisfies: 
\begin{equation}\label{hcondition}
\beta^1(\epsilon)_{\gamma^\epsilon(1)}=0, \quad (\epsilon\in I).
\end{equation}
Here, $\alpha^\epsilon$ denotes any family of time-dependant sections of $A$ {\bf extending} $a$, that is, such that $\alpha^\epsilon(\gamma^\epsilon(t),t)=a^\epsilon(t)$. One checks
that this definition is independent of the choice of $\alpha$ (see \cite{CrFe2}). We will refer to \eqref{hcondition} as the {\bf homotopy condition}. Note that, if some $\alpha^0$ is fixed, and we are given $\beta$ with $\beta^0=\beta^1=0$, then equation \eqref{evolution} can also be considered as an evolution equation for $\alpha$, which turns out to induce an homotopy.

Sometimes, it is useful to consider evolution equations \eqref{evolution} with non-vanishing initial conditions as well, because they also lead to $A$-homotopies, as we explain now. Let us set $X:=\sharp\alpha$ and $Y:=\sharp\beta$, so that $X^\epsilon$ and $Y^t$ are families of time dependent vector fields on $M$. Obviously, $X$ and $Y$ satisfy an evolution equation such as \eqref{evolution} in the algebroid $TM$. As the the proof of the next proposition will show, this forces their time dependent flows $\phi^{X^\epsilon}_{t,0}, \phi^{Y^t}_{\epsilon,0}$ to be related as follows:
\begin{equation}\label{hflows}
\phi^{X^\epsilon}_{t,0}\circ\phi^{Y^0}_{\epsilon,0}=\phi^{Y^t}_{\epsilon,0}\circ\phi^{X^0}_{t,0}. 
\end{equation}
In particular, if we denote by $\gamma^\epsilon(t)$ any of these two equivalent expressions applied to some $m_0\in M$, we obtain two families of $A$-paths $a^\epsilon$ and $b^t$, defined by $a^\epsilon(t):=\alpha^\epsilon(t)_{\gamma^\epsilon(t)}$ and $b^t:\epsilon\mapsto \beta^t(\epsilon)_{\gamma^\epsilon(t)}$. We have:

\begin{prop}
\label{hgeom}
For any pair of time dependent sections $\alpha$ and $\beta$ satisfying the evolution equation \eqref{evolution} and any $m_0\in M$, the concatenations $a^1\cdot b^0$ and $b^1\cdot a^0$ of the $A$-paths defined above are $A$-homotopic.
\end{prop}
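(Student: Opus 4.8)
The plan is to package the pair $(\alpha,\beta)$ into a single Lie algebroid morphism $TI^2\to A$ and then to deduce the statement from the homotopy invariance of $A$-paths under such morphisms. First I would apply the anchor to the evolution equation \eqref{evolution}: since $\sharp$ is a morphism of brackets, the vector fields $X=\sharp\alpha$ and $Y=\sharp\beta$ satisfy $\frac{\d X}{\d\epsilon}-\frac{\d Y}{\d t}=[X,Y]$ on $M$. This is precisely the flatness condition that forces the two composite flows in \eqref{hflows} to agree: both $\phi^{X^\epsilon}_{t,0}\circ\phi^{Y^0}_{\epsilon,0}$ and $\phi^{Y^t}_{\epsilon,0}\circ\phi^{X^0}_{t,0}$ solve the same first order system describing the evolution of $m_0$ in the two independent directions, hence coincide and define a single smooth map $\gamma:I^2\to M$, $(t,\epsilon)\mapsto\gamma^\epsilon(t)$, with $t$-velocity $X$ and $\epsilon$-velocity $Y$. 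This legitimizes the definitions of $a^\epsilon(t)=\alpha^\epsilon(t)_{\gamma^\epsilon(t)}$ and $b^t(\epsilon)=\beta^t(\epsilon)_{\gamma^\epsilon(t)}$ and shows, via the identities $\sharp a=\frac{\d}{\d t}\gamma$ and $\sharp b=\frac{\d}{\d\epsilon}\gamma$, that they are genuine $A$-paths in their respective variables.

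Next I would verify that the assignment $\Phi:=a\,\d t+b\,\d\epsilon$, covering $\gamma:I^2\to M$, is a Lie algebroid morphism $TI^2\to A$. Using the standard characterization of morphisms out of the tangent bundle of a square (see \cite{CrFe2,CrFe3}), the morphism condition collapses to a single identity relating the mixed derivatives $\partial_\epsilon a$, $\partial_t b$ and the bracket of the extending sections; I would check that this identity is nothing but the pointwise restriction of the evolution equation \eqref{evolution} along $\gamma$, the term $[\alpha,\beta]$ accounting exactly for the structure-preserving part. This is the step I expect to demand the most care: one must restrict the sections to $\gamma$, track the flows $\psi^{\alpha^\epsilon}_{t,s}$ implicit in the integral formula \eqref{hintegral}, and pin down sign conventions so that the morphism condition and \eqref{evolution} become literally the same equation.

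Finally I would invoke homotopy invariance. After the usual reparameterization that smooths out the corner, the concatenation $a^1\cdot b^0$ is the image under $\Phi$ of the boundary path of $I^2$ going up the left edge and then along the top, while $b^1\cdot a^0$ is the image of the path going along the bottom and then up the right edge; both run from $(0,0)$ to $(1,1)$. These two paths are homotopic rel endpoints inside the contractible square $I^2$, and pulling $\Phi$ back along such a homotopy $H:I^2\to I^2$ yields a morphism $TI^2\to A$ whose $\epsilon$-component vanishes at the $t$-endpoints, which is exactly the homotopy condition \eqref{hcondition}. Hence $a^1\cdot b^0$ and $b^1\cdot a^0$ are $A$-homotopic, as claimed.
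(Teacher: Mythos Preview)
Your argument is correct and takes a genuinely different route from the paper. The paper argues on the groupoid side: assuming $A$ integrable, it promotes $\alpha,\beta$ to right-invariant vector fields on $\G(A)$, observes that the evolution equation \eqref{evolution} is exactly the statement that $\ri{\alpha}+\partial_t$ and $\ri{\beta}+\partial_\epsilon$ commute on $\G(A)\times I^2$, and then reads off the commutation of their flows applied to ${\bf 1}_{m_0}$. The two concatenations then appear as the two boundary paths of a square in a single $\s$-fiber, hence are homotopic there, hence $A$-homotopic; the non-integrable case is left as a direct check. You instead stay entirely on the algebroid side: you recognize \eqref{evolution} as precisely the bracket-compatibility condition for $a\,\d t+b\,\d\epsilon$ to be a Lie algebroid morphism $TI^2\to A$ over $\gamma$, and then transport the obvious homotopy in $I^2$ through this morphism. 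Your approach has the advantage of treating the integrable and non-integrable cases uniformly and of making no reference to $\G(A)$; the paper's approach is perhaps more vivid if one already thinks in terms of flows on the groupoid. One minor remark: the step you flag as most delicate is in fact the easiest here, since $\alpha$ and $\beta$ are \emph{already} global time-dependent sections satisfying \eqref{evolution} on all of $M$, so no extension is needed and the morphism condition is immediate from the characterization in \cite{CrFe2,CrFe3}.
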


\begin{proof}
We will assume that $A$ is integrable. In the case where $A$ is not integrable, one easily checks that the formulas given below still satisfy the necessary conditions to be an $A$-homotopy.

Let us denote by $\ri{\alpha}^\epsilon$ and $\ri{\beta}^t$ the (time dependent) right invariant vector fields on $\G(A)$ induced by $\alpha^\epsilon$ and $\beta^t$ respectively. Then the evolution equation \eqref{evolution} exactly means that $\ri{\alpha}+\partial_t$ and $\ri{\beta}+\partial_\epsilon$ commute
 as vector fields on $\G(A)\times I\times I$. Therefore, their respective flows:
\[
\begin{array}{cccc}
 \psi_u^\alpha(g,t,\epsilon)&=&(\phi^{{\alpha}^\epsilon}_{t+u,t}(g),t+u,\epsilon)\\
 \psi_v^\beta(g,t,\epsilon)&=&(\phi^{{\beta}^t}_{\epsilon+v,\epsilon}(g),t,\epsilon+v),
\end{array}
\]
commute. Writing dowm explicitly the commutation relation, we easily obtain that the corresponding time dependent flows satisfy:
\[
\phi^{{\alpha}^{\epsilon+v}}_{t+u,t}\circ\phi^{{\beta}^t}_{\epsilon+v,\epsilon}=\phi^{{\beta}^{t+u}}_{\epsilon+v,\epsilon}\circ \phi^{{\alpha}^{\epsilon}}_{t+u,t}.
\]
In particular, taking $t=\epsilon=0$, and then switching the roles of $u,v$ with the ones of $t,\epsilon$, we obtain the following relation: 
\[
\phi^{{\alpha}^{\epsilon}}_{t,0}\circ\phi^{{\beta}^0}_{\epsilon,0}({\bf 1}_{m_0})=
\phi^{{\beta}^{t}}_{\epsilon,0}\circ \phi^{{\alpha}^{0}}_{t,0}({\bf 1}_{m_0}).
\]
Now, observe that the path in $\G(A)$ corresponding to the concatenation $a^1\cdot b^0$ is 
the concatenation of the following paths in the source-fibers:
\[ 
\epsilon\mapsto  \phi^{{\beta}^0}_{\epsilon,0}({\bf 1}_{m_0}),\quad t\mapsto\phi^{{\alpha}^{1}}_{t,0}\circ\phi^{{\beta}^0}_{1,0}({\bf 1}_{m_0}),
\]
while the one corresponding to $b^1\cdot a^0$ is the concatenation of the paths:
\[
t\mapsto\phi^{{\alpha}^{0}}_{t,0}({\bf 1}_{m_0}),\quad \epsilon\mapsto\phi^{{\beta}^{1}}_{\epsilon,0}\circ \phi^{{\alpha}^{0}}_{1,0}({\bf 1}_{m_0}).
\]
These are clearly homotopic (in the $\s$-fibers) since they define the boundary of a square given by:
\[
(t,\epsilon)\mapsto \phi^{{\alpha}^{\epsilon}}_{t,0}\circ\phi^{{\beta}^0}_{\epsilon,0}({\bf 1}_{m_0})=\phi^{{\beta}^{t}}_{\epsilon,0}\circ \phi^{{\alpha}^{0}}_{t,0}({\bf 1}_{m_0}).
\]
We conclude that $a^1\cdot b^0$ and $b^1\cdot a^0$ are homotopic as $A$-paths.
\end{proof}

\begin{rem}
When $b^0=0$, we can also interpret Proposition \ref{hgeom} in terms of $A$-homotopies: 
it says that $b^1$ is a representative (up to $A$-homotopy) of $a^1\cdot (a^0)^{-1}$. 
Hence, $b^1$ is trivial if and only if $a^0$ is homotopic to $a^1$.
\end{rem}

\subsection{Presymplectic groupoids}
\label{sub:symp:grpds}
When a Lie algebroid arises from a geometric structure, the corresponding Lie groupoid usually inherits an extra geometric structure. We are interested in the case of Dirac structures $L$ and in this case the Weinstein groupoid $\G(L)$ comes equipped with a multiplicative presymplectic form, as we briefly recall here (see \cite{PoWa} or \cite{BCWZ} for more details).

\begin{defn} 
\label{defn:multiplicative:form}
A  $2$-form $\Omega\in\Omega^2(\G)$ defined on the space of arrows of a Lie groupoid is said to be multiplicative if the following relation holds:
 \[m^*\Omega=pr_1^*\Omega+pr_2^*\Omega,\]
where $m:\G^{(2)}\to\G$ is the multiplication of composable arrows and $pr_i:\G^{(2)}\to\G$ the projection on both factors. Ones calls a \textbf{presymplectic groupoid} a Lie groupoid endowed with a multiplicative $2$-form $\Omega$ such that:
\begin{equation}
\label{eq:non-degeneracy}
\ker \Omega_x\cap \ker(\d \s)_x\cap \ker(\d \t)_x=\{0\},\quad \forall x\in M.
\end{equation}
\end{defn}

Roughly speaking, the following result says that Dirac structures integrate to presymplectic groupoids.
\begin{prop}
 Let $L$ be a Dirac structure on a manifold $M$. If $L$ is integrable, then  $\G(L)$  has a naturally induced multiplicative presymplectic form such that the map $(\t,\s):(\G(L),\Omega)\to (M\times M, L\times L^\text{opp})$ is $f$-Dirac.
\end{prop}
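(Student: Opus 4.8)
The plan is to obtain $\Omega$ as the multiplicative $2$-form integrating the infinitesimally multiplicative (IM) $2$-form that $L$ carries as a Lie algebroid, and then to read off both the non-degeneracy condition \eqref{eq:non-degeneracy} and the $f$-Dirac property from the infinitesimal data. The candidate IM $2$-form is the bundle map
\[
\sigma:L\longrightarrow T^*\!M,\qquad \sigma(X,\al):=\al,
\]
i.e.\ the restriction to $L$ of the projection $\TM\to T^*\!M$.

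First I would verify that $\sigma$ satisfies the two defining IM conditions, and this is where I expect the Dirac hypothesis to enter. The skew-symmetry condition $\langle\sigma(a),\sharp b\rangle=-\langle\sigma(b),\sharp a\rangle$ for $a=(X,\al)$ and $b=(Y,\be)$ in $\Gamma(L)$ reads $i_Y\al=-i_X\be$, which is exactly isotropy of $L$ with respect to $\langle~,~\rangle_+$. For the bracket compatibility $\sigma(\llbracket a,b\rrbracket)=\Lie_{\sharp a}\sigma(b)-i_{\sharp b}\,\d\sigma(a)$ I would apply $\sigma$ to the Courant bracket \eqref{eq:Courant:bracket}, obtaining $\Lie_X\be-\Lie_Y\al+\d\langle a,b\rangle_-$; using isotropy to rewrite $\langle a,b\rangle_-=i_Y\al$ and Cartan's formula $\Lie_Y\al=i_Y\d\al+\d i_Y\al$, this collapses to $\Lie_X\be-i_Y\d\al$, as required. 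Note that closure of $L$ under $\llbracket~,~\rrbracket$ is what guarantees that $\llbracket a,b\rrbracket$ is again a section of $L$ and that the relevant bracket is the Lie algebroid bracket.

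Next I would invoke the integration of IM $2$-forms to multiplicative $2$-forms on source $1$-connected groupoids (\cite{BCWZ}, or the transgression construction of \cite{PoWa}). Since $\G(L)$ is source $1$-connected, $\sigma$ integrates to a unique closed multiplicative form $\Omega\in\Omega^2(\G(L))$, characterised by a contraction relation of the form $i_{\ri{a}}\Omega=\t^*\sigma(a)$ with right-invariant vector fields $\ri{a}$. Concretely, $\Omega$ is the transgression of $\sigma$ along the families of $L$-paths representing a homotopy class, and the fact that it descends from $P(L)$ to the quotient $\G(L)=P(L)/\!\sim$ is precisely the integrated form of the bracket compatibility checked above.

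Finally, I would extract the remaining two properties infinitesimally. The non-degeneracy \eqref{eq:non-degeneracy} reduces, along the units, to $\ker\sigma\cap\ker\sharp=\{0\}$ in $L$; but $(X,\al)\in L$ with $\sigma(X,\al)=\al=0$ and $\sharp(X,\al)=X=0$ is the zero vector, so the condition holds trivially. For the $f$-Dirac statement I would use the contraction relation together with the canonical splitting $T_{{\bf 1}_m}\G(L)\cong T_mM\oplus L_m$, under which $\d\s$, $\d\t$ and $\Omega$ are all expressed directly through $\sharp$ and $\sigma$; a direct computation then shows that the pushforward of $\graph(\Omega)$ under $(\t,\s)$ along the unit section is $L\times L^{\mathrm{opp}}$, and multiplicativity of both $\Omega$ and the $f$-Dirac relation propagates this to all of $\G(L)$. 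The main obstacle I anticipate is exactly this last step: making the transgressed form descend cleanly to $\G(L)$ and establishing the $f$-Dirac identity globally rather than merely at units --- which is where the genuine work of \cite{BCWZ} lies --- whereas everything else reduces to the short infinitesimal computations above.
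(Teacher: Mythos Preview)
The paper does not actually prove this proposition: it is stated as a known result, with a reference to \cite{PoWa} and \cite{BCWZ} for the construction, and the subsequent paragraph merely records some of its consequences (the contraction formulas for $\Omega$ with left- and right-invariant vector fields). Your sketch is essentially the argument of \cite{BCWZ}: exhibit the projection $\sigma:L\to T^*\!M$ as an IM $2$-form, integrate it to a closed multiplicative $2$-form on the source $1$-connected groupoid, and read off non-degeneracy and the forward Dirac property from the infinitesimal data. So there is nothing to compare against in the paper itself, and your outline is the standard route.

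One minor correction: in your last paragraph you say that multiplicativity ``propagates'' the $f$-Dirac relation from the units to all of $\G(L)$. This is not quite how the argument runs, since being $f$-Dirac is not a multiplicative identity that one can translate by arrows. Rather, one uses that $(\t,\s)$ is a submersion together with the contraction formulas $i_{\ri{a}}\Omega=\t^*\sigma(a)$ and $i_{\le{a}}\Omega=-\s^*\sigma(a)$, which hold at \emph{every} point of $\G(L)$, to compute the pushforward of $\graph(\Omega)$ directly; the rank count $\dim\G(L)=2\dim M$ then forces the inclusion to be an equality. This is still routine once the contraction formulas are in hand, but it is not a propagation-from-units argument.
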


The aforementioned multiplicative presymplectic form $\Omega$ on $\G(L)$ is related to sections of $L$ in the following way: for any any $X\in T\G$ and any pair of sections $\eta=(v,\alpha),\xi=(w,\beta)\in\Gamma(L)$, one has:
\begin{align*}
\Omega(\le{\eta},X)&=-\alpha(\s_*X),\\
\Omega(\ri{\xi},X)&=\beta(\t_*X),
\end{align*}
where we denoted by $\le{\eta}$ (resp. $\ri{\xi}$) the left (resp. right) invariant vector field on $\G(L)$ associated to $\eta$ (resp. $\xi$). Also, source and target fibers turn out to be presymplectically orthogonal: 
\[ \Omega(\le{\eta},\ri{\xi})=0. \]
Finally, if $(S,\omega_S)$ is the presymplectic leaf of $(M,L)$ through $x\in M$, then $\t:\s^{-1}(x)\to S$ is a principal $G_x$-bundle and one has:
\[ i_{s^{-1}(x)}^*\Omega=\t|_{s^{-1}(x)}^*\omega_S, \]
where $i_{s^{-1}(x)}:s^{-1}(x)\hookrightarrow \G(L)$ denotes the inclusion.

\begin{ex}
The Dirac structure $L=D\oplus D^0$ associated with a integrable distribution $D=T\F$ is always integrable:  $\G(L)=\Pi_1(\F)\ltimes\nu^*(\F)$ is the groupoid obtained from the linear holonomy action of the homotopy classes of paths in $\F$ on the conormal bundle. The presymplectic form on $\G(L)$ is $\Omega=\pi_2^*\omega$ where $\pi_2: \Pi_1(\F)\ltimes\nu^*(\F)\to \nu^*(\F)$ is the projection in the second factor, and $\omega$ is the pullback under the inclusion $\nu^*(\F)\hookrightarrow T^*M$ of the canonical symplectic form on the cotangent bundle.
\end{ex}

\begin{ex}
The Dirac structure $L=\graph(\omega)$ associated with a closed 2-form $\omega\in\Omega^2(M)$ is always integrable: $\G(L)=\Pi_1(M)=\widetilde{M}\times_{\pi_1(M)} \widetilde{M}$, where $\widetilde{M}$ is the universal covering space of $M$. The presymplectic form on $\G(L)$ is $\Omega=\t^*\omega-\s^*\omega$.
\end{ex}

\begin{ex}
The Dirac structure $L=\graph(\pi)$ associated with a Poisson structure may fail to be integrable (see \cite{CrFe1} for a geometric description of the obstructions). If $L$ happens to be integrable, then the multiplicative 2-form $\Omega$ on $\G(L)$ is actually a \emph{symplectic} form. 
\end{ex}

Note that given an integrable Dirac structure $(M,L)$ there can be other presymplectic groupoids $(\G,\Omega_\G)$ integrating $(M,L)$ besides $(\G(L),\Omega)$. 
However, if $(\G,\Omega_\G)$ has source connected fibers, then there is a covering Lie groupoid homomorphism $\Phi:(\G(L),\Omega)\to (\G,\Omega_\G)$ with $\Phi^*\Omega_\G=\Omega$.

\subsection{Couplings integrate to couplings}                                                                               
\label{int:coupling}  			                         			                                                    

\comment{oli: I just added the completeness discussion, remained vague about $\tilde{p}$ being a locally trivial submersion, see if that is ok and erase this comment}

Assume now that $L$ is an integrable coupling Dirac structure on a fibration $p:E\to B$. The anchor $\sharp:L\to TE$ is a Lie algebroid morphism that integrates to the groupoid morphism $\G(L)\to \Pi(E)$ which associates to the homotopy class of an $L$-path the homotopy class of its base path. On the other hand, $p_*:TE\to TB$ is a Lie algebroid morphism whose integration $\Pi(E)\to\Pi(B)$ is the morphism $[\gamma]\mapsto [p\circ \gamma]$. We will denote the composition of this two groupoid morphisms by $\tilde{p}:\G(L)\to \Pi(B)$. 

The morphism $\tilde{p}:\G(L)\to \Pi(B)$ integrates the Lie algebroid morphism $p_*\circ\sharp:L\to TB$, which is surjective on the fibers, by the coupling condition. Hence, $\tilde{p}:\G(L)\to \Pi(B)$ is a submersion, which however does not need to be surjective. 

\begin{prop}
If $L$ is an integrable coupling Dirac structure on a fibration $p:E\to B$ and the induced Ehresmann connection is complete, then $\tilde{p}:\G(L)\to \Pi(B)$ is surjective and so it is a fibration.
\end{prop}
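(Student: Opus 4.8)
The crucial first observation is a terminological one: at the start of Section~\ref{sec:Dirac:connections} a \emph{fibration} is defined to be merely a surjective submersion, and the text preceding the statement has already established that $\tilde p$ is a submersion. Hence the whole content of the proposition reduces to proving that $\tilde p\colon \G(L)\to\Pi(B)$ is \emph{surjective on arrows}. The plan is to realize an arbitrary arrow $[\eta]\in\Pi(B)$ as $\tilde p([a])$ for an $L$-path $a$ that I build explicitly out of the Ehresmann connection $\Gamma$ underlying $L$, using the co-horizontal lifting map $h^*$.

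Concretely, fix an arrow $[\eta]\in\Pi(B)$ represented by a path $\eta\colon I\to B$ from $b_0$ to $b_1$, and choose any point $e_0\in E_{b_0}$ in the fiber over the source. First I would horizontally lift $\eta$ through $e_0$ with respect to $\Gamma$, obtaining a path $\gamma\colon I\to E$ with $\gamma(0)=e_0$, $p\circ\gamma=\eta$, and $\dot\gamma(t)=h(\dot\eta(t))|_{\gamma(t)}$. Then, using the co-horizontal lifting map, I set
\[ a(t):=h^*(\dot\eta(t))|_{\gamma(t)}\in\graph(\omega_H)|_{\gamma(t)}\subset L_{\gamma(t)}. \]
By \eqref{eq:splitbracket0} (taking the vertical component to vanish) the anchor of $a(t)$ is $\sharp\bigl(h^*(\dot\eta(t))\bigr)=h(\dot\eta(t))|_{\gamma(t)}=\dot\gamma(t)$, so $a$ is indeed an $L$-path over the base path $\gamma$. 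By the very definition of $\tilde p$ its value on $[a]$ is the homotopy class of $p\circ\gamma=\eta$, that is $\tilde p([a])=[\eta]$. This exhibits $[\eta]$ in the image and hence proves surjectivity.

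The single nontrivial input — and the reason the completeness hypothesis appears — is the existence of the horizontal lift $\gamma$ on the \emph{entire} interval $I$: this is exactly what completeness of $\Gamma$ guarantees. Without it the horizontal lift of $\eta$ might only be defined on a subinterval $[0,\epsilon)$, the lift escaping to infinity inside the fibers, and one could not build an $L$-path covering all of $\eta$; the image of $\tilde p$ could then genuinely miss homotopy classes. I expect this to be the only real obstacle, the remaining verifications (that $a$ is an $L$-path and that $\tilde p([a])=[\eta]$) being immediate from $\sharp\circ h^*=h$ and the definition of $\tilde p$. Once surjectivity is secured, combining it with the already-established submersion property yields that $\tilde p$ is a fibration in the sense used throughout the paper.
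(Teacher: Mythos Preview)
Your proof is correct and follows essentially the same approach as the paper's: horizontally lift the given path in $B$ using completeness, then produce an $L$-path over that lift whose $\tilde p$-image is the original class. You are simply more explicit than the paper about the construction of the $L$-path, writing it down as $a(t)=h^*(\dot\eta(t))|_{\gamma(t)}$ via the co-horizontal lift, whereas the paper just invokes $\widetilde\gamma'(t)\in\Hor\subset\im\sharp$ to assert existence of some $L$-path over $\widetilde\gamma$.
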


\begin{proof}
Given $[\gamma]\in\Pi(B)$, where $\gamma:I\to B$ is a smooth path, completeness allows us to lift $\gamma$ to an horizontal path $\widetilde{\gamma}:I\to E$. Since $\widetilde{\gamma}'(t)\in\im\sharp$, we can find an $L$-path $a:I\to L$ with base path $\widetilde{\gamma}$. Then $\tilde{p}([a])=[\gamma]$.
\end{proof}

\begin{rem}
One can show that if a locally trivial fibration $p:E\to M$ admits a complete Ehresmann connection, then $p_*:\Pi(E)\to\Pi(B)$ is also locally trivial and carries an induced Ehresmann connection. It follows then that if $L$ is an integrable coupling Dirac structure on a fibration $p:E\to B$ and the induced Ehresmann connection is complete, then $\tilde{p}:\G(L)\to \Pi(B)$ is also a locally trivial fibration.
\end{rem}

From now one we will make the implicit assumption that our coupling Dirac structures have complete induced connections. This is the case, e.g., if the fibers are compact.

\begin{thm}
\label{thm:grpdcoupling}
Let $L$ be a coupling Dirac structure on $p:E\to B$. If $L$ is integrable, then the multiplicative presymplectic form $\Omega$ on $\G(L)$ is fiber non-degenerate for the fibration 
\begin{equation} 
\label{int:fibration}
\tilde{p}:\G(L)\to \Pi(B),
\end{equation} 
obtained by integrating the Lie algebroid morphism $p_*\circ \sharp:L\to TB$. 
\end{thm}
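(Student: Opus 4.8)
The statement asserts that the multiplicative form $\Omega$ is fiber non-degenerate for $\tilde p$, which by definition means $(\Ver_\G\oplus\Ver_\G^0)\cap\graph(\Omega)=\{0\}$, where I write $\Ver_\G:=\ker\tilde p_*$ for the vertical bundle of $\tilde p$. Unwinding this, an element $(X,i_X\Omega)$ of $\graph(\Omega)$ lies in $\Ver_\G\oplus\Ver_\G^0$ exactly when $X\in\Ver_\G$ and $\Omega(X,Y)=0$ for all $Y\in\Ver_\G$. So the whole theorem reduces to the single claim: \emph{the restriction of $\Omega$ to $\Ver_\G$ is non-degenerate}, i.e.\ if $X\in\Ver_\G$ satisfies $\Omega(X,Y)=0$ for every $Y\in\Ver_\G$, then $X=0$. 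This is the target I would aim at.

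The first task is to identify $\Ver_\G$ concretely. Since $\tilde p$ is a groupoid morphism covering $p\colon E\to B$, the source and target of $\Pi(B)$ satisfy $\s_{\Pi(B)}\circ\tilde p=p\circ\s$ and $\t_{\Pi(B)}\circ\tilde p=p\circ\t$; differentiating shows that $X\in\Ver_\G$ forces $\s_*X,\t_*X\in\Ver$. Conversely, because the isotropy groups of $\Pi(B)$ are discrete, $(\s,\t)\colon\Pi(B)\to B\times B$ is a local diffeomorphism, so $\ker\s_*\cap\ker\t_*=0$ on $\Pi(B)$; hence $\s_*X,\t_*X\in\Ver$ already force $\tilde p_*X=0$. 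This gives the clean description $\Ver_\G=\{X: \s_*X\in\Ver,\ \t_*X\in\Ver\}$. Two consequences I would record: first, via the identification $\graph(\pi_V)\cong\ker(p_*\circ\sharp)$ of Proposition~\ref{prop:Dirac:extension}, the invariant vector fields $\ri{\alpha}$ and $\le{\alpha}$ attached to sections $\alpha\in\Gamma(\graph\pi_V)\cong\Gamma(\Ver^*)$ lie in $\Ver_\G$; second, $\t_*(\Ver_\G|_g)=\Ver_{\t(g)}$, which I would prove by lifting $v\in\Ver_{\t(g)}$ through the submersion $\t$ and correcting its source component using $\s_*(\ker\t_*)=\im\sharp$ together with $\Ver+\im\sharp=TE$ (the latter holds because $\Hor\subseteq\im\sharp$ and $\Hor\oplus\Ver=TE$).

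The core is then a short computation with the structure equations for $\Omega$. Suppose $X\in\Ver_\G$ lies in the kernel of $\Omega|_{\Ver_\G}$. Testing against $\le{\alpha}$ with $\alpha\in\Gamma(\Ver^*)$, the identity $\Omega(\le{\alpha},X)=-\alpha(\s_*X)$ together with the perfect pairing between $\Hor^0\cong\Ver^*$ and $\Ver$ gives $\s_*X=0$. Thus $X$ is tangent to the $\s$-fiber and can be written $X=\ri{\xi}_g$ for a section $\xi$ of $L$ whose value $a:=\xi_{\t(g)}=(\sharp a,\alpha_0)\in L_{\t(g)}$ satisfies $\sharp a=\t_*X\in\Ver$ (since $X\in\Ver_\G$). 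Now for every $Y\in\Ver_\G$ the identity $\Omega(\ri{\xi},Y)=\alpha_0(\t_*Y)$ yields $\alpha_0(\t_*Y)=0$, and since $\t_*(\Ver_\G|_g)=\Ver_{\t(g)}$ this means $\alpha_0\in\Ver^0$. Therefore $a=(\sharp a,\alpha_0)\in(\Ver\oplus\Ver^0)\cap L_{\t(g)}$, which is $\{0\}$ by fiber non-degeneracy of $L$. Hence $a=0$, so $X=\ri{\xi}_g=0$, establishing the non-degeneracy of $\Omega|_{\Ver_\G}$.

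The step I expect to require the most care, and the main obstacle, is the explicit identification of $\Ver_\G$ — especially the converse inclusion, which hinges on $(\s,\t)\colon\Pi(B)\to B\times B$ being a local diffeomorphism — together with the surjectivity $\t_*(\Ver_\G|_g)=\Ver_{\t(g)}$; once these are in place the conclusion follows directly from the defining formulas for the multiplicative form. I would emphasize that this argument invokes only the fiber non-degeneracy of $L$ on $E$ and the structure equations relating $\Omega$ to sections of $L$, and not the presymplectic non-degeneracy condition~\eqref{eq:non-degeneracy} itself.
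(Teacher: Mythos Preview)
Your argument is correct, and it follows a genuinely different route from the paper's own proof. The paper works directly with pairs $(X,\alpha)\in(\Ver_\G\oplus\Ver_\G^0)\cap\graph\Omega$: it writes $\alpha=\s^*a_0-\t^*a_1$ with $a_i\in\Ver^0$, invokes the fact that $(\t,\s)$ is forward Dirac to conclude $(\s_*X,a_0)\in L_{\s(g)}$ and $(\t_*X,a_1)\in L_{\t(g)}$, applies fiber non-degeneracy of $L$ on \emph{both} ends to get $\alpha=0$ and $X\in\ker\s_*\cap\ker\t_*$, and then finishes with the presymplectic non-degeneracy condition~\eqref{eq:non-degeneracy}. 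Your approach instead reformulates the problem as non-degeneracy of $\Omega|_{\Ver_\G}$, uses the explicit contraction formulas $\Omega(\le\eta,\cdot)=-\alpha\circ\s_*$ and $\Omega(\ri\xi,\cdot)=\beta\circ\t_*$ together with the surjectivity $\t_*(\Ver_\G|_g)=\Ver_{\t(g)}$, and applies fiber non-degeneracy of $L$ only once, at $\t(g)$. The gain is that you never invoke~\eqref{eq:non-degeneracy} or the forward Dirac property; the cost is the extra lemma $\t_*(\Ver_\G|_g)=\Ver_{\t(g)}$, which you correctly derive from $\Hor\subset\im\sharp$. Your remark that the presymplectic non-degeneracy is not needed is a genuine observation: it shows that fiber non-degeneracy of $\Omega$ for $\tilde p$ is a consequence of fiber non-degeneracy of $L$ alone, independently of the stronger groupoid condition.
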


\begin{proof}
Let us denote by $\Vert_{\G_L}:=\ker \tilde{p}$. We only need to check that the non-degeneracy condition \eqref{eq:fiber:non:degnrt} holds: 
\[ (\Ver_{\G(L)}\oplus\Vert_{\G(L)}^0)\cap \graph{\Omega}=\{0\}. \]

First notice that since $\tilde{p}$ is obtained by composing the groupoid maps $\G(L)\to \Pi(E)\to \Pi(B)$, it follows that $(X,\alpha)\in T\G(L)\oplus T^*\G(L)$ lies in $\Ver_{\G(L)}\oplus\Vert_{\G(L)}^0$ if and only if it satisfies the following two conditions:
\[ (\s_*\times \t_*)(X)\in\Ver\times \Ver, \quad \alpha \in  (\s^*\times \t^*)(\Ver^0\times \Ver^0). \]
Let $g\in \G(L)$ be the base point of $(X,\alpha)$ and set $x:=\s(g)\in E$ and $y:=\t(g)\in E$. The second condition shows that 
$\alpha\in \s^*(\Ver^0_x)+\t^*(\Ver^0_x)$, so there exists $a_0\in \Ver^0_x$ and $a_1\in\Ver^ 0_y$ such that $\alpha=\s^*a_0-\t^*a_1$. It follows from the first condition that:
\[ (\s_*X,a_0)\in \Ver_x\oplus \Ver^0_x,\quad (\t_*X,a_1)\in \Ver_y\oplus \Ver^0_y. \] 

Therefore, for any $(X,\alpha)\in (\Ver_{\G(L)}\oplus\Vert_{\G(L)}^0)\cap \graph{\Omega}$ we must have $(\s_*X,a_0)\in L_x$
and $(\t_*X,a_1)\in L_y$, since $\s \times \t$ is a forward Dirac Dirac map. By the fiber non-deneracy condition of $L$, we conclude that $(\s_*X,a_0)=0$ and $(\t_*X,a_1)=0$.

It follows that $\alpha=\s^*a_0-\t^*a_1=0$ and that $X \in \ker \s_*\cap \ker \t_*$. Since $(X,\alpha)\in \graph{\Omega}$ we conclude that $X\in \ker \s_*\cap \ker \t_*\cap \ker \Omega$. The non-degeneracy condition of $\Omega$ (see Definition \ref{defn:multiplicative:form}) shows that we must also have $X=0$.
\end{proof}

\begin{rem}
\label{rem:integr:fibers}
For each $b\in B$ the fiber $\tilde{p}^{-1}({\bf 1}_b)$ is a Lie subgroupoid of $\G(L)$ over the fiber $E_b:=p^{-1}(b)$ and the 
restriction of $\Omega$ to the fiber is symplectic: it is a symplectic groupoid integrating the vertical Poisson structure $(E_b,\pi_b)$ (the fact that $\ker p_*\circ\sharp=\sharp^{-1}(\Ver)$ identifies with $\graph{\pi_V}=\Ver^*$ as a Lie algebroid, is a consequence of  Proposition \ref{prop:splitbrackets}). The kernel of $\tilde{p}$ is also a Lie subgroupoid of $\G(L)$ over $E$ of a special kind, called a \emph{fibered symplectic groupoid}, which we will study in Section \ref{sec:fibered:sympl:grpds}.
\end{rem}

If $(\G,\Omega_\G)$ is another presymplectic groupoid integrating $(E,L)$ with source connected fibers, then we claim that there is also a fibration $\bar{p}:\G\to \G_B$, where $\G_B$ is a certain Lie groupoid integrating $TB$: in fact, since  $\G$ has source connected fibers, there is a covering homomorphism $\Phi:\G(L)\to \G$, whose kernel $\NN\subset \G(L)$ is an embedded bundle of normal subgroups. Its image $\tilde{p}(\NN)\subset\pi_1(B)$ is also an embedded bundle of normal subgroups and so the quotient $\G_B:=\pi_1(B)/\tilde{p}(\NN)$ is another Lie groupoid integrating $TB$. Moreover, we obtain a groupoid morphism $\bar{p}:\G\to \G_B$ from $\tilde{p}:\G(L)\to \pi(B)$ by passing to the quotient. We then obtain as a corollary of Theorem \ref{thm:grpdcoupling}:

\begin{cor}
\label{cor:coupling}
Let $L$ be a coupling Dirac structure on $E\to B$. If $(\G,\Omega_\G)$ is any source connected presymplectic groupoid integrating $L$ then $\Omega_G$ is a coupling form relative to the fibration $\bar{p}:\G\to \G_B$, the unique Lie groupoid homomorphism integrating the Lie algebroid morphism $\sharp\circ p_*$. 
\end{cor}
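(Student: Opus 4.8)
The existence of $\G_B$ and of the homomorphism $\bar p:\G\to\G_B$ was already produced in the paragraph preceding the statement, so the plan is to establish the two remaining assertions — that $\bar p$ is the \emph{unique} groupoid homomorphism integrating $p_*\circ\sharp$, and that $\Omega_\G$ is a coupling form for $\bar p$ — by transporting the source $1$-connected case of Theorem~\ref{thm:grpdcoupling} along the covering homomorphism $\Phi:(\G(L),\Omega)\to(\G,\Omega_\G)$ furnished at the end of Subsection~\ref{sub:symp:grpds}, which satisfies $\Phi^*\Omega_\G=\Omega$. The first step is to record the commuting square
\[
\bar p\circ\Phi=q\circ\tilde p,
\]
where $q:\Pi(B)\to\G_B=\Pi(B)/\tilde p(\NN)$ is the quotient projection; this identity is exactly how $\bar p$ was defined. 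Since $\Phi$ and $q$ are coverings, hence local diffeomorphisms, and $q_*$ is injective, differentiating this relation shows that $\Phi_*$ carries $\ker\tilde p_*$ isomorphically onto $\ker\bar p_*$ at corresponding points and sends $\tilde p$-fibers onto $\bar p$-fibers by local diffeomorphisms.

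The heart of the argument is then to transfer fiber non-degeneracy. Fixing $g\in\G(L)$, let $i$ be the inclusion of the $\tilde p$-fiber through $g$ and $j$ the inclusion of the $\bar p$-fiber through $\Phi(g)$, so that $\Phi\circ i=j\circ\Phi_{\mathrm{fib}}$ with $\Phi_{\mathrm{fib}}$ a local diffeomorphism. Pulling back $\Omega_\G$ gives
\[
\Phi_{\mathrm{fib}}^*\,(j^*\Omega_\G)=i^*\Phi^*\Omega_\G=i^*\Omega,
\]
and by Theorem~\ref{thm:grpdcoupling} the right-hand side is symplectic on the $\tilde p$-fiber. As non-degeneracy of a $2$-form is preserved under pullback by a local diffeomorphism, $j^*\Omega_\G$ is symplectic; and since $\Phi$ is surjective, every $\bar p$-fiber is obtained in this way. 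Thus $\Omega_\G$ restricts to a symplectic form on each fiber of $\bar p$, and as $\Omega_\G$ is closed (being multiplicative presymplectic) this is precisely the assertion that $\graph(\Omega_\G)$ is fiber non-degenerate, i.e.\ that $\Omega_\G$ is a coupling form for $\bar p$ in the sense of Subsection~\ref{sub:sec:examples:1}.

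Uniqueness of $\bar p$ would then follow formally: any homomorphism $\psi:\G\to\G_B$ integrating $p_*\circ\sharp$ yields two homomorphisms $\psi\circ\Phi$ and $\bar p\circ\Phi=q\circ\tilde p$ out of the source $1$-connected groupoid $\G(L)$ integrating the same algebroid map, whence they agree, and surjectivity of $\Phi$ forces $\psi=\bar p$. The only genuinely delicate point in the whole argument is the compatibility recorded in the first step — that the coverings $\Phi$ and $q$ match the two vertical distributions $\ker\tilde p_*$ and $\ker\bar p_*$; once this local-diffeomorphism bookkeeping is in place, Theorem~\ref{thm:grpdcoupling} transports the symplectic non-degeneracy across verbatim and no further computation is required.
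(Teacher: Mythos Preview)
Your argument is correct and is precisely the deduction the paper has in mind: the preceding paragraph sets up the covering $\Phi$ and the quotient $q$ so that $\bar p\circ\Phi=q\circ\tilde p$, and the corollary is then obtained from Theorem~\ref{thm:grpdcoupling} by pushing fiber non-degeneracy across these local diffeomorphisms, exactly as you do. Your added verification of uniqueness via Lie~II for $\G(L)$ and surjectivity of $\Phi$ is a clean way to justify the adjective ``unique'', which the paper leaves implicit.
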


\subsection{Integration of the geometric data}                                                                              
\label{int:data}  			                         			                                                    

Let $L$ be an integrable coupling Dirac structure on $p:E\to B$, with associated geometric data $(\pi_V,\Gamma,\omega_H)$, and $(\G,\Omega)$ a source connected presymplectic groupoid integrating $L$. According to the results of the previous section, $\Omega$ is a coupling form relative to a fibration $\bar{p}:\G\to \G_B$, which is a Lie groupoid homomorphism integrating the the Lie algebroid morphism $p_*\circ\sharp$. We denote by $(\Omega_V,\tilde{\Gamma},\Omega_H)$ the corresponding geometric data. 

Our next result shows that one can obtain the geometric data of the coupling multiplicative 2-form $\Omega$ in terms of the geometric data of the coupling Dirac structure $L$:


\begin{prop}[Integration of the Geometric Data]\label{prop:int:geom:data}
The geometric data $(\Omega_V,\tilde{\Gamma},\Omega_H)$ for $\Omega$ is related to the geometric data $(\pi_V,\Gamma,\omega_H)$ for $L$ in the following way:
\begin{enumerate}[(i)]
\item $(\G_{E_b},\Omega_{E_b}):=(\tilde{p}^{-1}({\bf 1}_b),i_b^*\Omega_V)$ is a symplectic Lie groupoid over $E_b$, which integrates $\pi_V|_{E_b}$, where $i_b:\tilde{p}^{-1}({\bf 1}_b)\hookrightarrow \G$ is the inclusion.
\item The connection $\tilde{\Gamma}$ has horizontal lift given by:
  \begin{equation}\label{eq:HOR}\HOR(v,w)=\le{h}^{\!*}(v)-\ri{h}^{\!*}(w),\end{equation}
  where $h^*$ denotes the co-horizontal lift.
\item the horizontal form $\Omega_H$ is given by:
\begin{align}\Omega_H(H(v_1,w_1),H(v_2,w_2))&=\omega_H(h(v_1),h(v_2))\circ\t \nonumber\\&\label{eq:OMEGAH} \ \quad-\omega_H(h(w_1),h(w_2))\circ\s.\end{align}
\end{enumerate}
where we used the natural identification $T_g\G_B=T_{\t(g)}B\times T_{\s(g)}B$.
\end{prop}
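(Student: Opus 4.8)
The plan is to read all three items as an unpacking of the geometric data $(\Omega_V,\tilde\Gamma,\Omega_H)$ attached, via the dictionary of Section~\ref{sec:Dirac:connections}, to the fiber non-degenerate Dirac structure $\graph(\Omega)$ on the fibration $\tilde p:\G(L)\to\G_B$, combined with the multiplicativity formulas for $\Omega$ recalled in Subsection~\ref{sub:symp:grpds}. Two preliminary identifications drive everything. First, specializing \eqref{eq:hor:space} to $L=\graph(\Omega)$, the horizontal distribution of $\tilde\Gamma$ is the $\Omega$-orthogonal complement of the vertical one, $\Hor_{\G}=\Ver_{\G}^{\perp_\Omega}$, where $\Ver_{\G}=\ker\tilde p_*$. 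Second, by \eqref{eq:2:form} the horizontal $2$-form is just the restriction of $\Omega$ to horizontal vectors, $\Omega_H(X_1,X_2)=\Omega(X_1,X_2)$ for $X_1,X_2\in\Hor_{\G}$. I will also record, exactly as in the proof of Theorem~\ref{thm:grpdcoupling}, that any $V\in\Ver_{\G}$ satisfies $\s_*V\in\Ver$ and $\t_*V\in\Ver$, since $p_*\s_*V=p_*\t_*V=0$.

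Item (i) is essentially Remark~\ref{rem:integr:fibers}. By Proposition~\ref{prop:Dirac:extension} the kernel of $p_*\circ\sharp$ is $\graph(\pi_V)\cong\Ver^*$, and \eqref{eq:splitbracket1} in Proposition~\ref{prop:splitbrackets} identifies its bracket with the one induced by $\pi_V$; restricting to a fiber, $\graph(\pi_V)|_{E_b}$ is precisely the Lie algebroid of the Poisson manifold $(E_b,\pi_V|_{E_b})$. Hence the subgroupoid $\tilde p^{-1}(\mathbf 1_b)$ is a Lie groupoid over $E_b$ integrating $\pi_V|_{E_b}$. Closedness of $\Omega$ makes its pullback to this fiber closed, while fiber non-degeneracy of $\Omega$ (Theorem~\ref{thm:grpdcoupling}) makes it non-degenerate, so $i_b^*\Omega_V$ is symplectic and $(\G_{E_b},\Omega_{E_b})$ is a symplectic groupoid integrating $(E_b,\pi_V|_{E_b})$.

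For item (ii) I verify that the proposed lift $H(v,w)=\overleftarrow{h}^{*}(v)-\overrightarrow{h}^{*}(w)$ lies in $\Hor_{\G}=\Ver_{\G}^{\perp_\Omega}$ and projects correctly; uniqueness of horizontal lifts then finishes, while the $C^\infty(B)$-linearity of $h^*$ noted before the proposition guarantees the expression depends only on $v,w$. Since $h^*(v)=(h(v),i_{h(v)}\omega_H)$ with $i_{h(v)}\omega_H\in\Ver^0$, the multiplicativity formulas give, for any $V\in\Ver_{\G}$,
\[
\Omega(\overleftarrow{h}^{*}(v),V)=-(i_{h(v)}\omega_H)(\s_*V)=0,\qquad
\Omega(\overrightarrow{h}^{*}(w),V)=(i_{h(w)}\omega_H)(\t_*V)=0,
\]
because $\s_*V$ and $\t_*V$ are vertical while $i_{h(\cdot)}\omega_H$ annihilates $\Ver$; hence $\Omega(H(v,w),V)=0$. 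Computing the push-forwards $\s_*,\t_*$ of the left- and right-invariant vector fields and using $\sharp\,h^*=h$ from \eqref{eq:splitbracket0} then shows that $\tilde p_*H(v,w)$ is the element of $T_g\G_B$ determined by $(v,w)$ under the identification $T_g\G_B\cong T_{\t(g)}B\times T_{\s(g)}B$.

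Finally, item (iii) combines the two preliminary identifications: writing $\Omega_H(H(v_1,w_1),H(v_2,w_2))=\Omega(H(v_1,w_1),H(v_2,w_2))$ and expanding bilinearly, the two mixed terms $\Omega(\overleftarrow{h}^{*}(v_i),\overrightarrow{h}^{*}(w_j))$ vanish because source and target fibers are presymplectically orthogonal, $\Omega(\overleftarrow{\eta},\overrightarrow{\xi})=0$. The two surviving diagonal terms are evaluated from the multiplicativity formulas together with the push-forwards of the invariant vector fields and $i_{h(\cdot)}\omega_H\in\Ver^0$, producing $\omega_H(h(v_1),h(v_2))\circ\t$ and $-\omega_H(h(w_1),h(w_2))\circ\s$. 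I expect the main obstacle to be purely bookkeeping: tracking the base points $\s(g)$ versus $\t(g)$ and the signs coming from left- versus right-invariance, so that the diagonal terms land on the correct factor; once these are pinned down by the sign conventions of the multiplicativity formulas, the three stated formulas follow.
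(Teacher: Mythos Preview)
Your proposal is correct and follows essentially the same approach as the paper: for (i) you both defer to Remark~\ref{rem:integr:fibers}; for (ii) you both verify that the candidate lift lies in $\Ver_{\G}^{\perp_\Omega}$ and projects to $(v,w)$, then invoke uniqueness of horizontal lifts; for (iii) you both expand $\Omega(H(v_1,w_1),H(v_2,w_2))$ bilinearly, kill the mixed terms via $\Omega(\overleftarrow{\eta},\overrightarrow{\xi})=0$, and evaluate the diagonal terms from the multiplicativity formulas. Your explicit preliminary identifications $\Hor_{\G}=\Ver_{\G}^{\perp_\Omega}$ and $\Omega_H=\Omega|_{\Hor_{\G}}$ make the logic slightly more transparent than the paper's terse computation, but the substance is identical.
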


\begin{proof}
Item (i) was already discussed in Section \ref{int:coupling} (see Remark \ref{rem:integr:fibers}). 

To prove item (ii), consider an element $(v,w)\in T_g\G_B=T_{\t(g)}B\times T_{\s(g)}B$. Using the expression $h^*(v):=(h(v),i_{h(v)} \omega_H)$ for the co-horizintal lifts, one checks that the right hand term in \eqref{eq:HOR} projects onto $(v,w)$ and lies in $\Ver_G^{\perp\Omega_L}$. By uniqueness, it must coincide with $\HOR(v,w)$.

Finally, expression \eqref{eq:OMEGAH} for the horizontal  2-form follows by straightforward computation, using the general properties of multiplicative $2$-forms:
\begin{align*}
 \Omega_H(\HOR(v_1,w_1),\HOR(v_2,w_2))
&=\Omega\bigl(\le{h^*}(v_1),\le{h^*}(v_2)\bigr)+\Omega\bigl(\ri{h^*}(w_1),\ri{h^*}(w_1)\bigr)\\
&=\bigl\langle(h(v_1),\eta_{v_1}),(h(v_2),\eta_{v_2})\bigr\rangle_-\circ \s \\
&\quad-\bigl\langle(h(w_1),\eta_{w_1}),(h(w_2),\eta_{w_2})\bigr\rangle_-\circ \t \\
&=\omega_H(h(v_1),h(v_2))\circ\t-\omega_H(w_1,w_2)\circ\s.
\end{align*}
\end{proof}

\begin{rem}
Note that the groupoid geometric data $(\Omega_V, \tilde{\Gamma}, \Omega_H)$ has a \emph{multiplicative} nature:
\begin{itemize}
\item The fiberwise symplectic forms are multiplicative 2-form on the vertical groupoids $\ker\tilde{p}$.
\item The Ehresmann connection $\HOR\tto \Hor$ is a \emph{multiplicative distribution}, since it is a subgroupoid of $TG\tto TE$ over $\Hor\subset TE$.
\item Similarly, equation \eqref{eq:OMEGAH} indicates that $\Omega_H$ is a multiplicative 2-form. There are several ways of expressing this multiplicativity. For example, one may say that for any pair of
composable arrows $(v_1,w_1),(v_2,w_2)\in \Hor^{(2)}$, based at the same composable arrow $(g_1,g_2)\in\G^{(2)}$, one has: 
\[ \Omega_H(m_{\Hor}(v_1,w_1),m_{\Hor}(v_2,w_2))=\Omega_H(v_1,v_2)+\Omega_H(w_1,w_2).\]
One may also say that the composition $\Hor\to \Hor^*\to T^*G$ is a groupoid morphism, where the first map is contraction by $\Omega_H$ and the second one is the inclusion coming from the splitting $TG=\Ver\oplus\Hor$. 
\end{itemize}
\end{rem}

Observing that $\Omega$ is fiber non-degenerate for both $p\circ\s$ and $p\circ \t$, we obtain that:

\begin{cor}
For each $b\in B$, the presymplectic groupoid $(\G,\Omega)$ and the symplectic groupoid $(\G_{E_b}, \Omega_{E_b})$ are Morita equivalent presymplectic groupoids:
\[\SelectTips{cm}{}\xymatrix@C=7pt@R=15pt{                &(\mathcal{P},\Omega_\mathcal{P})\ar[dl]_{\t|_{\mathcal{P}}}\ar[dr]^{\s|_{\mathcal{P}}}& \\
            (\G,\Omega)&                                                     &(\G_{E_b},\Omega_{E_b})}\]
where $\mathcal{P}:=\s^{-1}(E_b)$ and $\Omega_\mathcal{P}:=i_\mathcal{P}^*\Omega$, with $i_\mathcal{P}:\mathcal{P}\hookrightarrow \G$ denoting the inclusion.
\end{cor}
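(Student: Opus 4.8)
The plan is to realize $\mathcal{P}=\s^{-1}(E_b)$ as a \emph{Morita bibundle} carrying the commuting actions of $(\G,\Omega)$ and $(\G_{E_b},\Omega_{E_b})$ by left and right multiplication, and then to check that the closed 2-form $\Omega_{\mathcal P}=i_{\mathcal P}^*\Omega$ promotes this bibundle to an equivalence of \emph{presymplectic} groupoids. The starting structural observation is that, since $p\circ\s$ factors as $\mathbf{s}_{\Pi(B)}\circ\tilde p$, one has $\mathcal{P}=(p\circ\s)^{-1}(b)$; that is, $\mathcal{P}$ is a single fibre of the submersion $p\circ\s\colon\G\to B$. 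Because $\Omega$ is fibre non-degenerate for $p\circ\s$, this fibre is a Dirac transversal and $\Omega_{\mathcal P}$ is exactly the induced (closed, leafwise non-degenerate) 2-form on it. Left multiplication by $\G$ preserves $\s$-fibres, hence preserves $\mathcal{P}$ and covers $\t|_{\mathcal P}$; right multiplication by $\G_{E_b}=\tilde p^{-1}({\bf 1}_b)$ preserves $\t$-fibres, hence preserves $\mathcal{P}$ and covers $\s|_{\mathcal P}$. These two actions manifestly commute.

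First I would dispose of the two moment maps and the ``easy'' side of principality. Surjectivity of $\t|_{\mathcal P}\colon\mathcal P\to E$ follows by the same lifting argument used above to show that $\tilde p$ is a fibration: given $y\in E$, completeness of the connection lets one lift a path in $B$ from $b$ to $p(y)$ to an $L$-path ending at $y$ whose source sits in $E_b$. Thus $\s|_{\mathcal P}$ and $\t|_{\mathcal P}$ are surjective submersions, being restrictions of $\s,\t$ to a fibre of $p\circ\s$. The left $\G$-action is free and transitive on the fibres of $\s|_{\mathcal P}$: if $\s(p)=\s(p')$ then $g:=p'\cdot p^{-1}$ is the unique arrow with $g\cdot p=p'$, so $\G\backslash\mathcal P\cong E_b$ through $\s|_{\mathcal P}$ and the left action is principal.

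The hard part will be right-principality, namely showing that the fibres of $\t|_{\mathcal P}$ are exactly the $\G_{E_b}$-orbits. Freeness is again immediate, so the content is transitivity: given $p,p'\in\mathcal P$ with $\t(p)=\t(p')$, the element $h:=p^{-1}\cdot p'$ is a well-defined arrow of $\G$ whose source and target both lie in $E_b$, and one must establish that $h\in\G_{E_b}$. Since $\tilde p(h)=\tilde p(p)^{-1}\cdot\tilde p(p')$, this is equivalent to the equality $\tilde p(p)=\tilde p(p')$ of the two base classes in $\Pi(B)$ — equivalently, to the assertion that $\s^{-1}(E_b)\cap\t^{-1}(E_b)$ coincides with $\tilde p^{-1}({\bf 1}_b)$. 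This is the heart of the argument and the step I expect to be delicate; it is presumably here that the second fibre non-degeneracy hypothesis, for $p\circ\t$, must be used, together with source-connectedness of $\G$ and the fibration property of $\tilde p\colon\G\to\Pi(B)$.

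Finally I would verify the 2-form compatibility that upgrades the groupoid bibundle to a Morita equivalence of presymplectic groupoids. The form $\Omega_{\mathcal P}=i_{\mathcal P}^*\Omega$ is closed because $\Omega$ is, and the required ``dual pair'' condition — that the $\Omega_{\mathcal P}$-orthogonal of the tangent spaces to the $\t|_{\mathcal P}$-fibres is, modulo $\ker\Omega_{\mathcal P}$, the tangent spaces to the $\s|_{\mathcal P}$-fibres — should be read off directly from the presymplectic orthogonality $\Omega(\le{\eta},\ri{\xi})=0$ of left- and right-invariant vector fields, combined with the non-degeneracy condition of Definition \ref{defn:multiplicative:form}. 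The same relations show that the left $\G$- and right $\G_{E_b}$-actions are by presymplectomorphisms and that $\t|_{\mathcal P}$ and $\s|_{\mathcal P}$ pull $\Omega$ and $\Omega_{E_b}$ back compatibly to $\Omega_{\mathcal P}$, which is the last ingredient needed for the stated equivalence. I expect this part to be routine once the transitivity step is settled.
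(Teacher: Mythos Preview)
Your proposal is a careful, essentially correct expansion of what the paper treats in one sentence: the paper's entire proof is the phrase preceding the corollary, ``Observing that $\Omega$ is fiber non-degenerate for both $p\circ\s$ and $p\circ\t$''. The authors are invoking the standard fact that a full transversal (here $E_b\subset E$) to the orbit foliation of a (pre)symplectic groupoid yields a Morita equivalence with bibundle $\s^{-1}(E_b)$, and they leave the routine bibundle verification implicit. Your outline supplies exactly those verifications and identifies the same key hypothesis.

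One correction to your diagnosis of the ``hard step''. Fiber non-degeneracy for $p\circ\t$ is not what resolves right-principality; it is what guarantees that $\Omega_{\mathcal P}$ and $\Omega_{E_b}$ are the correct $2$-forms (that $\mathcal P$ and $\G_{E_b}$ are genuine symplectic-type transversals in $\G$). Right-principality is purely groupoid-theoretic: for \emph{any} transversal $N$ meeting every orbit, $\s^{-1}(N)$ is a Morita bibundle between $\G$ and the full restriction $\G|_N:=\s^{-1}(N)\cap\t^{-1}(N)$, and transitivity of the right action is then tautological, since $p^{-1}p'$ lies in $\G|_{E_b}$ by construction.

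Your worry is therefore really about the identification $\G_{E_b}=\tilde p^{-1}(\mathbf 1_b)$ versus $\G|_{E_b}=\tilde p^{-1}(\pi_1(B,b))$, and that worry is legitimate: these can differ when $\pi_1(B,b)\neq 1$. The paper does not address this, and strictly speaking the clean statement uses $\G|_{E_b}$ (or one must argue separately that the two coincide, e.g.\ by source-connectedness of the restriction). This is a gap in the paper's formulation rather than in your argument; your instinct that something is delicate there is sound, but the remedy is to use the full restriction groupoid rather than to seek help from fiber non-degeneracy.
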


\section{Integration of the Yang-Mills-Higgs phase space}

In \cite{BrFe2} we have proposed an integration procedure for a Yang-Mills-Higgs phase space. This procedure consists in forming a certain Hamiltonian quotient which is hard to make sense for arbitrary coupling Dirac structures, since it will involve an infinite dimensional reduction. In this section, we give a different approach to integrating a Yang-Mills-Higgs phase space.

This new construction of the integration of a Yang-Mills-Higgs phase space $E=P\times_G F$ associated with a triple $(P,G,F)$ and a choice of connection $\theta:TP\to\gg$, involves the following steps:
\begin{enumerate}[(i)]
\item Integration of the Poisson structure on the fiber $(F,\pi_F)$ to a symplectic groupoid $\F\tto F$;
\item Integration of the vertical Poisson structure $\Ver^*$ to a \emph{fibered symplectic groupoid} $\G_V=P\times_G \F\tto E$;
\item Integration of the principal $G$-bundle $P\to B$ to the gauge groupoid $\G(P)\tto B$;
\item The gauge groupoid $\G(P)\tto B$ acts on the fibered groupoid $\G_V\tto E\to B$, yielding a semi-direct product groupoid $\G(P)\ltimes G_V\tto E$.
\item  Finally, the integration of the Yang-Mills phase space is a quotient 
\[ \G(L)=\G(P)\ltimes \G_V/\,\C, \] 
where $\C$ is a certain \emph{curvature groupoid}.
\end{enumerate}
The next paragraphs describe these constructions.

\subsection{Fibered symplectic groupoids} %
\label{sec:fibered:sympl:grpds}

We discuss the first two integration steps above. For this, we recall briefly from \cite{BrFe} a few notions about fibered symplectic groupoids.

\subsubsection{Fibered groupoids}

Let us fix a base $B$. We have the category $\mathbf{Fib}$ of fibrations over
$B$, where the objects are the fibrations $p:E\to B$ and the morphisms
are the fiber preserving maps over the identity.

A \textbf{fibered groupoid} is an internal groupoid in $\mathbf{Fib}$, \emph{i.e.},
an internal category where every morphism is an isomorphism. This means 
that both the total space $\G_V$ and the base $E$ of a fibered groupoid are 
fibrations over $B$ and all structure maps are fibered maps. For instance, the source
and the target maps are fiber preserving maps over the identity:
\[\SelectTips{cm}{}\xymatrix@R=15pt@C=15pt{\G_V\ar[dr]\ar@<0.5ex>[r]\ar@<-0.5ex>[r] & E\ar[d]\\ & B  }\]
It follows that any orbit of $\G_V$ lies in a fiber of $p:E\to B$. In fact, $\G_V|_{E_b}:=(p\circ\s)^{-1}(b)=(p\circ\t)^{-1}(b)$ is a Lie groupoid over $E_b$.

The infinitesimal version of a fibered Lie groupoid $\G_V\tto E\to B$ is a \textbf{fibered Lie algebroid} $A_V\to E\to B$. This means $\pi: A_V\to E$ is a Lie algebroid, the vector bundle projection is map of fibrations:
\[\SelectTips{cm}{}\xymatrix@R=15pt@C=15pt{A_V\ar[dr]\ar[r] & E\ar[d]\\ & B  }\]
and the image of the anchor $\sharp_V$ takes value in the vertical bundle $\Ver\subset TE$. There is an obvious Lie functor from fibered groupoids to fibered algebroids.

A elementary way to obtain a fibered groupoid/algebroid is by using a principal bundle whose structure group acts on a Lie groupoid/algebroid by automorphisms. Then we have the following:
\begin{prop}\label{prop:ass:fib:grpd}
 Consider a principal $G$-bundle $P$ and an action $\Ac:G\rightarrow \Aut(\F)$ of $G$ on a Lie groupoid $\F\tto F$ by Lie groupoid automorphisms.
The associated bundle $P\times_G \F$ carries a natural structure of a fibered Lie groupoid over $E=P\times_G F$. The corresponding fibered Lie algebroid is $P\times_G A(\F)\to E$.
 \end{prop}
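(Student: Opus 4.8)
The plan is to descend every structure map of $\F$ to the associated bundles, exploiting that each $\Ac(g)$ is a groupoid automorphism and is therefore $G$-equivariantly compatible with source, target, multiplication, unit and inversion. Writing $\G_V=P\times_G\F=(P\times\F)/G$ and $E=P\times_G F$ with the convention $[ug,\phi]=[u,\Ac(g)\phi]$ (and likewise on $F$), I would first define candidate source and target by $\tilde{\s}[u,\phi]:=[u,\s_\F(\phi)]$ and $\tilde{\t}[u,\phi]:=[u,\t_\F(\phi)]$. These are well defined precisely because a groupoid automorphism covers a base map intertwining $\s_\F$ and $\t_\F$, so that $\s_\F$ and $\t_\F$ are $G$-equivariant. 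Both $\tilde{\s}$ and $\tilde{\t}$ visibly cover the identity of $B$ under the projections $\G_V\to B$ and $E\to B$, which is exactly the assertion that $\G_V\tto E\to B$ is a fibered groupoid: every structure map preserves the $B$-fibers.

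The crucial observation for defining the multiplication is that composable arrows automatically lie over the same point of $B$. I would use freeness and fiber transitivity of the $G$-action on $P$ to represent any composable pair as $[u,\phi]$ and $[u,\psi]$ with a common $u\in P$; since $\phi\mapsto[u,\phi]$ is injective for fixed $u$, the condition $\tilde{\s}[u,\phi]=\tilde{\t}[u,\psi]$ then reduces to $\s_\F(\phi)=\t_\F(\psi)$, i.e.\ composability in $\F$. I would set $[u,\phi]\cdot[u,\psi]:=[u,m_\F(\phi,\psi)]$, together with units $[u,x]\mapsto[u,\iota_\F(x)]$ and inversion $[u,\phi]\mapsto[u,i_\F(\phi)]$. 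Here the automorphism hypothesis is used in full: changing the common lift from $u$ to $ug$ replaces both factors by $\Ac(g)^{-1}$ of themselves, and since $\Ac(g)^{-1}$ preserves composability, multiplication, units and inverses, the descended operations are independent of the representative. The groupoid axioms then follow because they hold fiberwise in $\F$.

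Smoothness I would verify in local trivializations of $P\to B$. Over a trivializing open $U\subset B$ the bundle restricts to the product groupoid $U\times\F\tto U\times F$, whose structure maps are the evident fibered versions of those of $\F$ and are hence smooth; the transition functions between charts act through $\Ac$ and its derivative, both smooth, so the globally descended structure maps are smooth. This is the most tedious but entirely routine step, and it also confirms that $\G_V\tto E$ is a genuine Lie groupoid.

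For the final claim I would differentiate. Since $\Ac$ takes values in groupoid automorphisms, the derivatives $\d\Ac(g)$ define an action of $G$ on $A(\F)=\ker(\d\s_\F)|_F$ by Lie algebroid automorphisms, so $P\times_G A(\F)\to E$ is itself a Lie algebroid by the same associated-bundle recipe. To identify it with $A(\G_V)$ I would again use locality of the Lie functor: over $U$ one has $A(\G_V)|_U\cong U\times A(\F)$, and the gluing cocycle of $A(\G_V)$ is $\d\Ac$ applied to the transition cocycle of $P$, which is exactly the cocycle defining $P\times_G A(\F)$; the induced anchors and brackets then agree. I expect this algebroid identification to be the main obstacle, since one must check that applying the Lie functor genuinely commutes with the associated-bundle construction, i.e.\ that $A(\G_V)$ is assembled from $\d\Ac$ and no higher-order data. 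Everything else is a formal consequence of $\Ac$ landing in $\Aut(\F)$.
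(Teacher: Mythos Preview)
Your proposal is correct and follows essentially the same approach as the paper: you descend the structure maps of $\F$ to the associated bundle by representing composable arrows with a common $u\in P$ and checking well-definedness via the automorphism property of $\Ac$. The paper's proof is in fact terser than yours---it writes down the same source, target, composition, unit and inverse formulas and leaves the verification of the groupoid axioms and the Lie algebroid identification to the reader---so your additional care with smoothness in local trivializations and with the cocycle argument for $A(\G_V)\cong P\times_G A(\F)$ only adds detail the paper omits.
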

 \begin{proof}  The associated bundle $P\times_G \F$ is defined as the space of equivalence classes $[u:a]$ of couples $(u,a)\in P\times \F$ under the relation: 
\[[u:a]=[u':a']\iff (u',a')=(ug^{-1},\Ac_g(a))\text{ for some }  g\in G.\] 
One defines a source and a target map $\s,\t: P\times_G \F \to  P\times_G F$  by: 
\begin{align*}\s[u:a]&:=[u:\s(a)],\\
              \t[u:a]&:=[u:\t(a)],
\end{align*}
and one easily checks that $\s,\t$ are well defined. Then we define a composition by setting $[u':a']\cdot[u:a]=[u, \Ac_g(a')\cdot a]$ where $g$ is the unique element of $G$ such that $u'=ug$. Once checked that it is well defined, observe that there is a more convenient formula for the composition, which is given by:
 \[[u:a']\cdot[u:a]=[u:a \cdot a'].\]
We leave it to the reader to check that these structure maps turn $P\times_G \F$ into a Lie groupoid over $P\times_G F$ with inverse and units given by $[u:a]^{-1}=[u:a^{-1}]$ and ${\bf 1}_{[u:x]}=[u:{\bf 1}_x]$.
\end{proof}

\begin{rem}
As a basic observation, note that each fiber of $P\times_G \F$ comes naturally equipped with the structure of a Lie groupoid over the corresponding fiber of $ P\times_G F$, clearly isomorphic to the model $\F\tto F$.
\end{rem}

\subsubsection{Poisson fibrations}
We now apply these constructions to integrate Poisson fibrations into fibered symplectic groupoids.

\begin{defn}
A \textbf{Poisson fibration} $p:E\to B$ is a locally trivial fiber
bundle, with fiber type a Poisson manifold $(F,\pi_F)$ and with
structure group a subgroup $G\subset \Diff_{\pi}(F)$. When $\pi$ is
symplectic the fibration is called a \textbf{symplectic fibration}.
\end{defn}

 The fibers $E_b:=p^{-1}(b)$ of a Poisson fibration come with an induced Poisson structure $\pi_{E_b}$ that glue to a Poisson structure
$\pi_V$ on the total space of the fibration, so that $\pi_{E_b}=\pi_V|_{E_b}.$

The bivector field $\pi_V$ is \emph{vertical} that is, it takes values in
$\wedge^2\Vert\subset\wedge^2TE$. Hence, the fibers $(E_b,\pi_{E_b})$ 
become Poisson submanifolds of $(E,\pi_V)$.

It is important to distinguish $\pi_V$ as a \emph{vertical} Poisson structure from its underlying Poisson structure on $E$.
In particular, the Lie algebroid structure associated to $\pi_V$ as a vertical Poisson structure is defined on the co-vertical bundle $\Ver^*$, rather than on $T^*\!E$. The corresponding bracket and anchor are given by \eqref{eq:vertical:bracket} and \eqref{eq:vertical:anchor}. Clearly, this is a fibered version of the usual construction, which we formalize as follows:

\begin{defn}
A \textbf{fibered symplectic groupoid} is a fibered Lie groupoid $\G_V$
whose fiber type is a symplectic groupoid $(\F,\omega)$. 
\end{defn}

Therefore, if $\G_V$ is a fibered symplectic groupoid over $B$, then 
$p\circ \s=p\circ\t:\G_V\to B$ is a symplectic fibration, and each symplectic fiber $\G_V|_{E_b}$ 
is a symplectic groupoid over the corresponding fiber $E_b$.

\begin{prop}
The base $E\to B$ of a fibered symplectic groupoid $\G_V\tto E\to B$ has a natural
structure of a Poisson fibration.
\end{prop}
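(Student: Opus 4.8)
The plan is to apply, fiber by fiber, the classical correspondence between symplectic groupoids and Poisson manifolds, and then to check that the resulting fiberwise Poisson structures assemble into an honest Poisson fibration. Recall that any symplectic groupoid $(\Sigma,\omega)\tto M$ endows its base $M$ with a unique Poisson structure for which the target $\t:\Sigma\to M$ is a Poisson map and the source $\s$ is anti-Poisson (see \cite{CrFe1}). I would take this as the starting point.

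By the definition of a fibered symplectic groupoid, for each $b\in B$ the restriction $\G_V|_{E_b}\tto E_b$ is a symplectic groupoid, with symplectic form the restriction of the fiberwise symplectic form of $\G_V$. Applying the correspondence above to each such fiber produces a Poisson structure $\pi_{E_b}$ on $E_b$. The first substantive step is to verify that these glue to a smooth vertical bivector field $\pi_V\in\X^2(\Ver)$ on $E$: since the fiberwise symplectic forms vary smoothly over $B$ and each $\pi_{E_b}$ is determined by $\omega_b$ through $\t$, the family $\{\pi_{E_b}\}_{b\in B}$ depends smoothly on $b$, yielding a bivector taking values in $\wedge^2\Ver$. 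Each $\pi_{E_b}$ is Poisson, so $[\pi_{E_b},\pi_{E_b}]=0$; as the vertical Schouten bracket is computed fiberwise, this gives $[\pi_V,\pi_V]=0$, so that $\pi_V$ is a vertical Poisson structure with $\pi_V|_{E_b}=\pi_{E_b}$.

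It remains to produce local trivializations exhibiting $E\to B$ as a Poisson fibration. Over a sufficiently small open set $U\subseteq B$, a local trivialization of $\G_V$ as a fibered symplectic groupoid identifies it with $U\times\F\tto U\times F$ and its fiberwise symplectic forms with $\omega$; the induced Poisson structures on the fibers are then all identified with the Poisson structure $\pi_F$ integrated by $\F$, giving a trivialization $E|_U\cong U\times F$ compatible with $\pi_V$. The key point, which I expect to be the main obstacle, is that the transition functions behave correctly: a change of trivialization is an automorphism of the symplectic groupoid $\F$, and by functoriality of the symplectic-groupoid/Poisson correspondence — such an automorphism intertwines targets and symplectic forms — it descends on the base to a Poisson diffeomorphism of $(F,\pi_F)$. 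Hence the structure group is contained in $\Diff_{\pi_F}(F)$, and $E\to B$ is a Poisson fibration with fiber type $(F,\pi_F)$, as claimed.
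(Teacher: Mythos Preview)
Your argument is correct and is exactly the natural one; the paper itself states this proposition without proof (the surrounding discussion defers to \cite{BrFe}), and the expected argument is precisely yours: apply the symplectic-groupoid/Poisson correspondence fiberwise, then use that the transition functions of a fibered symplectic groupoid are, by definition of the fiber type being $(\F,\omega)$, symplectic groupoid automorphisms and hence descend to elements of $\Diff_{\pi_F}(F)$. The only cosmetic improvement is that smoothness of $\pi_V$ is cleaner via the local trivialization $E|_U\cong U\times F$ (where $\pi_V$ is the constant family $\pi_F$) than via the sentence ``$\omega_b$ varies smoothly so $\pi_{E_b}$ does too.''
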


Conversely, a Poisson fibration whose fiber type is an integrable Poisson 
manifold, integrates to a fibered symplectic groupoid. In fact, standard facts about integration of Lie algebroids yield the following (see \cite{BrFe} for details):

\begin{thm}
Let $p:E\to B$ be a Poisson fibration with fiber type $(F,\pi_F)$ an
integrable Poisson manifold. There exists a unique (up to isomorphism)
source 1-connected fibered symplectic groupoid integrating $\Ver^*$. 
\end{thm}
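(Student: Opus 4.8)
The plan is to realize the desired groupoid as an associated bundle of the form $P\times_G\F$ and then invoke Proposition~\ref{prop:ass:fib:grpd}. Since $p:E\to B$ is a locally trivial Poisson fibration with fiber type $(F,\pi_F)$ and structure group $G\subset\Diff_\pi(F)$, I would first write $E=P\times_G F$ for the associated principal $G$-bundle $P\to B$. Using integrability of $(F,\pi_F)$, let $\F\tto F$ be its source $1$-connected symplectic groupoid; this is unique up to isomorphism and its Lie algebroid is the cotangent algebroid $A(\F)=T^*F$ of $\pi_F$, with bracket and anchor as in \eqref{eq:vertical:bracket} and \eqref{eq:vertical:anchor} (taken over a single fiber).

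The crux of the argument is to lift the structure group action to an action on $\F$ by symplectic groupoid automorphisms. Each $g\in G$ is a Poisson diffeomorphism of $(F,\pi_F)$ and therefore induces an isomorphism of cotangent Lie algebroids $T^*F\to T^*F$. By functoriality of the source $1$-connected integration, this algebroid isomorphism integrates to a \emph{unique} symplectic groupoid automorphism $\Ac_g\in\Aut(\F)$. Uniqueness of the integrating morphism then forces $\Ac_{gh}=\Ac_g\circ\Ac_h$ and $\Ac_e=\id$, so that $\Ac:G\to\Aut(\F)$ is a homomorphism; smooth dependence of the integration on parameters gives smoothness of $\Ac$. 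I expect this to be the main obstacle: the homomorphism property is purely formal (via uniqueness), but the smoothness of $g\mapsto\Ac_g$ — needed so that the transition cocycle $g_{\alpha\beta}$ of $P$ lifts to a well-defined groupoid cocycle $\Ac_{g_{\alpha\beta}}$ and the local pieces $U_\alpha\times\F$ glue — is the delicate point, and is where one leans on the standard parameter-smoothness results for integration.

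With $\Ac$ in hand, Proposition~\ref{prop:ass:fib:grpd} produces a fibered Lie groupoid $\G_V:=P\times_G\F\tto E$ over $B$. By the Remark following that proposition each fiber $\G_V|_{E_b}$ is isomorphic to the model $\F\tto F$, hence is a symplectic groupoid, so $\G_V$ is a fibered symplectic groupoid. Its fibered Lie algebroid is $P\times_G A(\F)=P\times_G T^*F$, which under $\Ver=\ker\d p\cong P\times_G TF$ is identified with $\Ver^*\cong P\times_G T^*F$; a routine check confirms that the induced bracket and anchor match \eqref{eq:vertical:bracket} and \eqref{eq:vertical:anchor}, so $\G_V$ integrates $\Ver^*$. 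The source fiber of $\G_V$ through $[u:{\bf 1}_x]$ is $\{[u:a]:a\in\s^{-1}(x)\}$, diffeomorphic to the source fiber of $\F$, so source $1$-connectedness of $\F$ passes to $\G_V$. Finally, for uniqueness I would appeal to the general uniqueness of the source $1$-connected integration of a Lie algebroid: any other such fibered symplectic groupoid is in particular a source $1$-connected Lie groupoid integrating $\Ver^*$, hence isomorphic to $\G_V$; one then observes that the isomorphism is automatically fibered over $B$ (since the anchor of $\Ver^*$ is vertical, it covers a fiber-preserving diffeomorphism of $E$) and symplectic on fibers (the fiber forms being determined infinitesimally by $\Ver^*$), yielding an isomorphism of fibered symplectic groupoids.
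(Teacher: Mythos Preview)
Your proposal is correct and follows exactly the approach the paper has in mind: the paper does not prove this theorem here but defers to \cite{BrFe}, and the construction you describe---lift the Poisson action on $F$ to a symplectic groupoid action on the source 1-connected $\F=\Sigma(F)$ by functoriality, then form $\G_V=P\times_G\F$ via Proposition~\ref{prop:ass:fib:grpd}---is precisely what is used later in Section~\ref{sub:sec:action:grpd} (and is the content of \cite{BrFe}). The one caveat worth flagging is that the structure group $G\subset\Diff_\pi(F)$ need not be a finite-dimensional Lie group, so strictly speaking you should phrase the gluing in terms of the transition cocycle $g_{\alpha\beta}:U_{\alpha\beta}\to\Diff_\pi(F)$ and its lift $\Ac_{g_{\alpha\beta}}:U_{\alpha\beta}\to\Aut(\F)$ rather than invoke Proposition~\ref{prop:ass:fib:grpd} literally; you already anticipate this when you isolate the smoothness of the lifted cocycle as the delicate point.
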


\begin{rem}
Notice that the integration of $\pi_V$ as a Poisson fibration differs from its integration as a mere Poisson manifold. In particular, $\G(\Ver^*)$ has only dimension $2\dim(F)+\dim(B)$.
\end{rem}

\subsection{Action groupoids}
\label{sub:sec:action:grpd}
Next we will discuss steps (iii) and (iv) in the integration of Yang-Mills phase spaces. We describe an action of the gauge groupoid of a principal bundle on an associated fibered groupoid, and the
resulting action groupoid.

\subsubsection{Action of a Lie groupoid on a fibered Lie groupoid}
\label{sub:sec:action:grpd:fibred:grpd}

For a Lie groupoid acting on a $fibered$ Lie groupoid, there is a natural notion of an \emph{action groupoid} \cite{HM} which we recall now. Note that we will only be interested in the case where $\G$ is transitive, although the discussion makes sense in general.

Given a fibered groupoid $\G_V\tto E\overset{p}{\rightarrow} B$, we call {\bf gauge groupoid} the following transitive (infinite dimensional) groupoid:
\[\Gau(\G_V):=\set{\G_V|_{E_b}\xrightarrow{g} \G_V|_{E_{b'}} : g \text{ is a Lie groupoid isomorphism}}\] 
with source $\s(g)=b$, target $\t(g)=b'$, and with the obvious composition.

\begin{defn}
 An {\bf action of a groupoid} $\G\tto B$ {\bf on a fibered groupoid} $\G_V\tto E\rightarrow B$ is a Lie groupoid homomorphism
$\Phi:\G\to \Gau(\G_V).$
\end{defn}
Given such an action, there is an associated groupoid $\G\ltimes\G_V\tto E$ which we will refer to as the semi-direct {\bf action groupoid} associated to the action of $\G\tto B$ on the fibered groupoid $\G_V\tto E\rightarrow B$. As a set, the space of arrows is defined as the fiber product of $\G$ and $\G_V$ over $B$:
\[\G\ltimes\G_V:=\G\, {}_\s\! \times_{p\circ\t} \G_V=\set{ (g,a)\in \G\times \G_V : a\in \G_V|_{E_\s(g)}}.\]
The source and target are given by: $\s(g,a):=\s(a),\ \t(g,a):=\t(\Phi_g(a))$, the units by ${\bf 1}_x=({\bf 1}_{p(x)},{\bf 1}_x)$, the inverses by $(g,a)^{-1}=(g^{-1}, \Phi_g(a)^{-1})$ and the composition is given by the following formula:
\begin{equation}\label{eq:semidirectproduct}
(g_2,a_2)\cdot(g_1,a_1)=(g_2\cdot g_1,\Phi_{g_1}^{-1}(a_2)\cdot a_1).
\end{equation}

When $\G_V$ is trivial (that is, when $\G_V$ has only units) we recover the usual action groupoid  associated to an action of a groupoid on a fibration over its space of objects.

In order to define the infinitesimal counterpart of this action groupoid, for a fibered Lie algebroid $A_V\to E$ we shall denote by:
\[\Der_B(A_V):=\bigl\{D\in\Der(A_V) : \text{symbol of }D \text{ is } p_*\text{-projectable}\bigr\}\]
There is a well defined map $\rho:\Der_B(A_V)\to \X(B)$, $D\mapsto p_*X$, where $X$ denotes the symbol of $D$. We will always assume that $A_V$ is locally trivial, so it follows that $\rho$ is surjective. Notice that $\Der_B(A_V)$ comes with a natural structure of a $C^\infty(B)$-module defined by the formula 
\[ (f.D)(\alpha):=fD(\alpha).\] 
Here, the fact that $f.D$ is indeed an element of $\Der_B(A_V)$ is a consequence of $\sharp_V$ taking values in the vertical bundle. Furthemore $f.D$ has symbol $fX_D$, where $X_D$ denotes the symbol of $D$. Also, one may check that $\Der_B(A_V)$ is a Lie algebra over $C^\infty(B)$ and $\rho$ a morphism of Lie algebras over $C^\infty(B)$.

\begin{defn} An \textbf{action of a Lie algebroid} $A\to B$ \textbf{on a fibered Lie algebroid} $A_V\to E\to B$ is a $C^\infty(B)$-linear Lie algebra morphism $\mathcal{D}:\Gamma(A)\to \Der_B(A_V)$ covering the anchor map that is, such that $\sharp_A=\rho\circ\mathcal{D}$.
\end{defn}

Given such an action,  we denote by $A\ltimes A_V$ the vector bundle $p^*A\oplus A_V$. It comes naturally equipped with the structure of a Lie algebroid over $E$. Since sections of $A$ generate sections of $p^*A$ as a $C^\infty(E)$-module, to define the bracket and anchor we  only need to specify them for sections of the form $(v,\alpha)$, where $v\in\Gamma(A),$ and $\alpha\in\Gamma(A_V)$. The anchor is given by the formula:
\[\sharp(v,\alpha):=X_{\mathcal{D}_v}+\sharp_V(\alpha)\]
where $X_{\mathcal{D}_v}\in \X(E)$ is the symbol of $\mathcal{D}_v$. The bracket takes the form:
\begin{equation*}\left.
\begin{aligned}
\left[v,w\right]_{}&:=\left[v,w\right]_{A}\!\\
\left[\al,\beta\right]_{}&:=\left[\alpha,\beta\right]_{A_V}\!\\
\left[v,\alpha\right]_{}&:=\mathcal{D}_v(\alpha)\\
\end{aligned}\right.
\end{equation*}
for any sections $\al,\beta \in\Gamma(A_V)$ and $v, w \in\Gamma(A)$, seen as sections of $A\ltimes A_V$. Then we extend the bracket and anchor to arbitrary sections of $A\ltimes A_V$.

\begin{defn}
Given an action $\mathcal{D}:\Gamma(A)\to \Der_B(A_V)$ of $A$ on $A_V$, we call \textbf{action Lie algebroid} the Lie algebroid $A\ltimes A_V$ described above.
\end{defn}

\begin{rem} 
\label{rem:algebrd:action:grpd}
If $A_V$ is the Lie algebroid of a fibered Lie groupoid $\G_V$, then $\Der_B(A_V)$ can be thought of as the Lie algebroid of $\Gau(\G_V)$. Then, if  $\Phi:\G\to \Gau(\G_V)$ is an action of $\G$ on a fibered groupoid, differentiation gives a Lie algebroid action $\mathcal{D}:A\to \Der_B(A_V)$. Moreover,  $A\ltimes A_V\to E$  is the Lie algebroid associated with the semi-direct action groupoid $\G\ltimes \G_V\tto E$. 

However, in order for an infinitesimal action $\mathcal{D}:\Gamma(A)\to\Der_B(A_V)$ to integrate to a groupoid action $\Phi:\G(A)\to \Gau(\G(A_V))$, we need to assume that $A$ acts by complete lifts, namely that the symbol of $\mathcal{D}_\alpha$ is a complete vector field on $E$ for any $\alpha \in \Gamma(A)$. 
 \end{rem}

\subsubsection{Actions of principal bundles on fibered Lie groupoids}

Given a principal $G$-bundle $P$, we recall that its {\bf gauge groupoid}, denoted by $\G(P)\tto B$, has spaces of arrows the associated bundle $P\times_{G}P$. Denoting by $[u_2:u_1]$ the equivalence class of a couple $(u_2,u_1)\in P\times P$, the source and target of $\G(P)$ are defined by:
\begin{align*} 
\s([u_2:u_1])&:=q(u_1),\\
\t([u_2:u_1])&:=q(u_2),
\end{align*}
and the composition by: 
\[ [u_4:u_3]\cdot[u_2:u_1]=[u_4g:u_1], \]
where $g$ is the unique element of $G$ such that $u_3g=u_2$. Once checked that the composition is well defined, one may use a more convenient formula, as follows:
 \[[w:v]\cdot[v:u]=[w:u].\]
Inverses and identities are given by $[v:u]^{-1}=[v:u]$ and ${\bf{1}}_x=[u:u]$, for any $x\in B$, and any choice of $u\in P_x$.

The groupoid $\G(P)\tto B$ is transitive and its isotropy groups fit into a Lie group bundle $\Iso_P\to B$. In fact, $\Iso_P$ canonically identifies to the associated bundle $P\times_G G$ and we have:
\[\SelectTips{cm}{}\xymatrix@C=20pt@R=1pt{
               \Iso_P=P\times_G G   \ \ar@{^{(}->}@<-0.5pt>[r] & \G(P)=P\times_G P \\
             [u:h ]   \  \ar@{|->}@<-0.25pt>[r]    & [uh:u ].
}\]
Note that the neutral connected component of the bundle of groups $\Iso_P=P\times_G P$ is $\Iso^\circ_P=P\times_G G^\circ$ and naturally injects in $\G(P)$ by the same formula as above. Here $G^\circ$ denotes the connected component of the identity in $G$.

We will be interested in the $\s$-simply connected goupoid $\widetilde{\G}(P)$ corresponding to $\G(P)$ rather than $\G(P)$ itself. The principal bundle corresponding to $\widetilde{\G}(P)$ has total space the universal cover $\widetilde{P}$ of $P$. When $G$ is connected, the structure group $\bar{G}$ of $\widetilde{P}$ fits into an exact sequence:
\[1\to \im \partial_2 \to \widetilde{G}\to \bar{G} \to 1,\]
\comment{oli: connected components}
where $\partial_2:\pi_2(B)\to \pi_1(G)$ is the boundary operator in the long exact sequence of homotopy groups induced by the projection $P\to B$. In other words, one can always assume that $\pi_1(G)=\im \partial_2$, provided one chooses to work with $\widetilde{\G}(P)$ instead of $\G(P)$.

The Lie algebroid associated to $\G(P)$ is usually denoted by $TP/G$. It is a vector bundle over $B$ whose sections are the $G$-invariant vector fields on $P$, and fits in the Attyiah sequence:
\[\SelectTips{cm}{}\xymatrix@C=15pt{     \ker\sharp  \ \ar@{^{(}->}@<-0.25pt>[r]&       TP/G            \ar@{->>}[r]<-0.25pt> &    TB     .}\]

\begin{prop}\label{prop:action:principal:fibered}
Let $P$ be a principal $G$-bundle and $\Ac:G\rightarrow \Aut(\F)$ an action of $G$ on a Lie groupoid $\F\tto F$ by Lie groupoid automorphisms. There is a natural action of the gauge groupoid $\G(P)$ on the associated fibered Lie groupoid $\G_V:=P\times_G \F$.
\end{prop}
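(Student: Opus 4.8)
The plan is to construct the action directly as a Lie groupoid homomorphism $\Phi\colon \G(P)\to\Gau(\G_V)$, exploiting the canonical trivialisations of the fibres of $\G_V$ by the model groupoid $\F$. Recall from the Remark following Proposition \ref{prop:ass:fib:grpd} that each point $u\in P$ with $q(u)=b$ determines a Lie groupoid isomorphism
\[ \iota_u\colon \F\To \G_V|_{E_b},\qquad a\longmapsto [u:a], \]
covering the diffeomorphism $F\to E_b$, $x\mapsto[u:x]$. The only structural fact I need about this family is its $G$-equivariance, namely that $\iota_{ug}=\iota_u\circ\Ac_g$ for every $g\in G$; this is immediate from the defining relation $[ug:a]=[u:\Ac_g(a)]$, and it is precisely here that the hypothesis that $\Ac$ acts by \emph{groupoid} automorphisms enters, guaranteeing that $\iota_u\circ\Ac_g$ is again a groupoid isomorphism.

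Given this, for an arrow $[u_2:u_1]\in\G(P)$ with $\s=q(u_1)=b$ and $\t=q(u_2)=b'$ I would set
\[ \Phi_{[u_2:u_1]}:=\iota_{u_2}\circ\iota_{u_1}^{-1}\colon \G_V|_{E_b}\To\G_V|_{E_{b'}},\qquad [u_1:a]\longmapsto[u_2:a]. \]
Being a composition of Lie groupoid isomorphisms, $\Phi_{[u_2:u_1]}$ is itself one, hence a legitimate element of $\Gau(\G_V)$ with the correct source and target. Well-definedness with respect to the chosen representative is the first thing to check: replacing $(u_2,u_1)$ by $(u_2g,u_1g)$ and using equivariance gives
\[ \iota_{u_2g}\circ\iota_{u_1g}^{-1}=\iota_{u_2}\circ\Ac_g\circ\Ac_g^{-1}\circ\iota_{u_1}^{-1}=\iota_{u_2}\circ\iota_{u_1}^{-1}, \]
so $\Phi_{[u_2:u_1]}$ depends only on the class $[u_2:u_1]$.

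It then remains to verify that $\Phi$ is a homomorphism of groupoids. The source and target maps are respected by construction. For composability I would represent two composable arrows in the convenient form $[u_3:u_2]$ and $[u_2:u_1]$ sharing the middle point $u_2$; then
\[ \Phi_{[u_3:u_2]}\circ\Phi_{[u_2:u_1]}=\iota_{u_3}\circ\iota_{u_2}^{-1}\circ\iota_{u_2}\circ\iota_{u_1}^{-1}=\iota_{u_3}\circ\iota_{u_1}^{-1}=\Phi_{[u_3:u_1]}, \]
which is exactly $\Phi$ applied to the composite $[u_3:u_2]\cdot[u_2:u_1]=[u_3:u_1]$. Units are preserved since $\Phi_{\mathbf 1_b}=\Phi_{[u:u]}=\iota_u\circ\iota_u^{-1}=\id$, and preservation of inverses then follows automatically.

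Finally, smoothness of $\Phi$ comes from the observation that the families $(u_2,u_1,a)\mapsto[u_1:a]$ and $(u_2,u_1,a)\mapsto[u_2:a]$ are smooth on $P\times P\times\F$ and descend to the diagonal $G$-quotient, so that $\Phi$ is smooth in the appropriate sense for the (infinite dimensional, transitive) gauge groupoid $\Gau(\G_V)$. I expect the only genuinely delicate point to be this last one, the formulation of smoothness for the infinite dimensional target $\Gau(\G_V)$; the algebraic verifications above are routine once the equivariance $\iota_{ug}=\iota_u\circ\Ac_g$ has been isolated as the single mechanism making everything consistent.
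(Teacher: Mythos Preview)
Your proof is correct and follows essentially the same route as the paper's: both define $\Phi_{[u_2:u_1]}$ by the formula $[u_1:a]\mapsto[u_2:a]$ and check it is a well-defined groupoid morphism into $\Gau(\G_V)$. Your framing via the trivialisations $\iota_u$ and the equivariance $\iota_{ug}=\iota_u\circ\Ac_g$ is somewhat more conceptual and makes the verifications cleaner, but the content is identical; the paper simply states the formula directly and leaves the checks to the reader.
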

\begin{proof}
We denote the associated fibered Lie groupoid by:
\[\SelectTips{cm}{}\xymatrix@R=15pt@C=0pt{ *+[r]{\G_V\!\!:=}\ar@<0.5ex>[d]\ar@<-.5ex>[d]             &\, P\times_G \F\ar@<0.5ex>[d]\ar@<-.5ex>[d]\\
                                    *+[r]{ E:= }                                 &\, P\times_G F,} \] 
and we define a map $\Phi:\G(P)\to\Gau(\G_V)$ by 
\[\Phi_{[u_2:u_1]}([u:a]):=[u_2g:a],\] 
where $g$ is the unique element of $G$ such that $u_1g=u$. It is easy to see that $\Phi$ is well defined. A more convenient formula for $\Phi$ is:
\[\Phi_{[v:u]}([u:a])=[v:a] \quad (u,v\in P,\ a\in \F).\]
This also shows that $\Phi$ takes values in $\Gau(\G_V)$ and is a groupoid morphism.
\end{proof}

\begin{prop}\label{prop:actiongrpd:formulas} In the conditions of the Proposition \ref{prop:action:principal:fibered}, the  action groupoid $\G\ltimes\G_V\tto E$ identifies with the quotient $(P\times P\times \F)/G$, where $G$ acts on $P\times P \times F$ diagonally:
\[[v:u:a]=[vg:ug:\Ac_g^{-1}(a)] \quad (g\in G).\]
 Under this identification, the structure maps are given as follows:
\begin{itemize}
\item the source and target map are given by:
\begin{align*}
\s[v:u:a]&=[u:\s(a)],\\
\t[v:u:a]&=[v:\t(a)],
\end{align*}
\item the unit at a point $[u:x]$, where $x\in F,\,u\in P$, is given by:
 \[{\bf 1}_{[u:x]}=[u:u:{\bf 1}_x]\] 
\item the inverses are given by the following formula:
 \[[v:u:a]^{-1}=[u:v:a^{-1}].\]
\item and the composition is given by:
\[[w:v:a']\cdot[v:u:a]=[w:u:a'\cdot a].\]
\end{itemize}
\end{prop}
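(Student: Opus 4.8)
The plan is to produce an explicit bijection $\Psi$ between the arrow space of the action groupoid $\G(P)\ltimes\G_V$ and the quotient $(P\times P\times\F)/G$, and then to check that $\Psi$ carries the structure maps of Section \ref{sub:sec:action:grpd} to the formulas in the statement. All of this is elementary once the arrows are put in a suitable normal form, and the composition is the only step that is not purely formal.

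First I would normalize a general arrow. By definition an arrow of $\G(P)\ltimes\G_V$ is a pair $(g,a)$ with $g=[v:u']\in P\times_G P$ and $a=[u'':b]\in P\times_G\F$ subject to the fiber-product constraint $a\in\G_V|_{E_{\s(g)}}$, i.e.\ $p(\s(a))=\s(g)$. Unwinding the source maps this reads $q(u'')=q(u')$, so $u''=u'k$ for a unique $k\in G$, and the defining relation of $P\times_G\F$ lets me rewrite $a=[u':\Ac_k(b)]$. Hence every arrow has a representative $([v:u],[u:a])$ in which $g$ and $a$ share the frame $u$, and the only remaining freedom is the simultaneous replacement $u\mapsto ug,\ v\mapsto vg,\ a\mapsto\Ac_g^{-1}(a)$, which is exactly the diagonal $G$-action on $P\times P\times\F$. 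This gives a well-defined map
\[
\Psi:\G(P)\ltimes\G_V\To(P\times P\times\F)/G,\qquad ([v:u],[u:a])\mapsto[v:u:a],
\]
with candidate inverse $[v:u:a]\mapsto([v:u],[u:a])$; the same frame-alignment computation shows the inverse is well defined, so $\Psi$ is a bijection.

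Next I would transport the structure maps. The source and target are immediate from $\s(g,a)=\s(a)$ and $\t(g,a)=\t(\Phi_g(a))$, using the convenient formula $\Phi_{[v:u]}([u:a])=[v:a]$ from Proposition \ref{prop:action:principal:fibered} together with $\s[u:a]=[u:\s(a)]$, $\t[u:a]=[u:\t(a)]$; they yield $\s[v:u:a]=[u:\s(a)]$ and $\t[v:u:a]=[v:\t(a)]$. The units and inverses are equally direct: from ${\bf 1}_x=({\bf 1}_{p(x)},{\bf 1}_x)$ one gets ${\bf 1}_{[u:x]}=([u:u],[u:{\bf 1}_x])\mapsto[u:u:{\bf 1}_x]$, and from $(g,a)^{-1}=(g^{-1},\Phi_g(a)^{-1})$ together with $[v:u]^{-1}=[u:v]$ and $[u:a]^{-1}=[u:a^{-1}]$ one gets $([v:u],[u:a])^{-1}=([u:v],[v:a^{-1}])$, which is already in normal form and maps under $\Psi$ to $[u:v:a^{-1}]$.

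The one genuinely non-formal step is the composition, and this is where I would be most careful about the interplay of the three $G$-actions (on $\G(P)$, on $\G_V$, and on the triple quotient) when realigning frames. Writing two composable arrows in normal form as $([w:v],[v:a'])$ and $([v:u],[u:a])$ and substituting into $(g_2,a_2)\cdot(g_1,a_1)=(g_2 g_1,\Phi_{g_1}^{-1}(a_2)\cdot a_1)$, the gauge part gives $[w:v]\cdot[v:u]=[w:u]$. For the fibered part, $\Phi_{g_1}^{-1}=\Phi_{[u:v]}$ moves the frame of $a_2$ from $v$ back to $u$, so $\Phi_{g_1}^{-1}(a_2)=[u:a']$; since $[u:a']$ and $a_1=[u:a]$ now share the frame $u$, the composition law of $P\times_G\F$ (Proposition \ref{prop:ass:fib:grpd}), specialized to the case $u''=u$ where the twist $\Ac_g$ is trivial, produces $[u:a'\cdot a]$. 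Applying $\Psi$ to $([w:u],[u:a'\cdot a])$ returns $[w:u:a'\cdot a]$, exactly as claimed. Once the normal form $([v:u],[u:a])$ is installed, every identity thus collapses to the convenient product formulas already recorded for $P\times_G P$ and $P\times_G\F$, so no further computation is needed.
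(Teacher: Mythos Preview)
Your proposal is correct and follows exactly the same approach as the paper: normalize an arrow $([v:u'],[u'':b])$ by aligning the frames so that $u'=u''$, observe that the residual ambiguity is precisely the diagonal $G$-action, and then read off the structure maps from the definitions in Section~\ref{sub:sec:action:grpd:fibred:grpd} and Proposition~\ref{prop:action:principal:fibered}. The paper's proof simply stops after the normalization and declares that the structure formulas follow, whereas you have carried out those verifications explicitly; nothing in your argument differs in substance from what the authors intend.
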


\begin{proof}
According to the construction of the action groupoid in Section \ref{sub:sec:action:grpd:fibred:grpd}, an arrow in $\G(P)\ltimes \G_V$ is a couple $([u_2:u_1],[u:a])$, where $q(u)=q(u_1)$. Since there exists a unique $g\in G$ such that $u_1g=u$, we can always assume that $u_1=u$, and we obtain an identification with $(P\times P\times \F)/G$ by writing the arrows as $([v:u],[u:a])\simeq[v:u:a]$. The formulas for the structure maps then follow from Proposition \ref{prop:action:principal:fibered} and the definition of the action groupoid.
\end{proof}

\subsubsection{Action groupoid of a Poisson fibration} 
Let $E=P\times_G F\to B$ be a Poisson fibration associated with a principal $G$-bundle $p:P\to B$ and an action of $G$ on an integrable Poisson manifold $(F,\pi_F)$ by Poisson diffeomorphisms. The results above show that one obtains an action groupoid as follows.

First, we consider the source connected symplectic groupoid $\F\tto F$ integrating $(F,\pi_F)$. The $G$-action on $F$ by Poisson diffeomorphisms lifts to Lie groupoid action $\Ac:G\to \Aut(\F)$ by groupoid automorphisms (see, e.g., \cite{FeOrRa}). Therefore, according to Propositions \ref{prop:action:principal:fibered} and \ref{prop:actiongrpd:formulas}, there is a natural action of the gauge groupoid $\G(P)\tto B$ on the associated fibered Lie groupoid $\G_V:=P\times_G \F\tto E\to B$, giving rise to an action Lie groupoid $\G(P)\ltimes \G_V\tto E$. 

According to the preceding discussion (see Remark \ref{rem:algebrd:action:grpd}), the Lie algebroid of the action Lie groupoid $\G(P)\ltimes \G_V\tto E$ has underlying vector bundle 
\[ p^*A\ltimes A_V=p^*(TP/G)\ltimes \Ver^*. \]
To determine the bracket and the anchor, we need to find the Lie algebra homomorphism $\mathcal{D}:\Gamma(TP/G)\to \Der_B(\Ver^*)$. Since $\Ver^*$ identifies naturally with the 
associated bundle $\Ver^*=P\times_G\, T^*\!F$ and since the action of $G$ on $T^*\! F$ is naturally lifted from the $G$-action on $F$, $\mathcal{D}$ associates to each $G$-invariant vector field $X$ in $P$
the Lie derivative of the vector field $X_E\in\X(E)$, induced by the natural action on $T^*\!P/G$ on $E$. In other words, $\mathcal{D}_v$ coincides with the Lie derivative of its own symbol:
\[ \mathcal{D}_X(\alpha)=\Lie_{X_E}\al, \quad(X\in\Gamma(TP/G),\, \alpha\in\Gamma(\Ver^*)) \]
where $X_E$ is the projection on $E=P\times_G F$ of the vector field $(X,0)\in\X(P\times F)$. It follows that if $X,Y$ denote $G$-invariant vector fields in $P$ and $\al,\be\in \Gamma(\Ver^*)$, then the anchor of $p^*(TP/G)\ltimes \Ver^*$ is given by the formula:
\begin{equation}
\label{eq:anchor:semi}
\sharp(X,\alpha):=X_E+\pi^\sharp_V(\alpha)
\end{equation}
and the bracket takes the form:
\begin{align}
\label{eq:brackets:semi}
\left[X,Y\right]_{A\ltimes A_V}&:=\left[X,Y\right],\notag\\
\left[\al,\beta\right]_{A\ltimes A_V}&:=\left[\alpha,\beta\right]_{\Vert^*},\!\\
\left[X,\alpha\right]_{A\ltimes A_V}&:=\Lie_{X_E}(\alpha).\notag
\end{align}

\subsection{Integrability of Yang-Mills-Higgs phase spaces}\label{sec:int:YMH2}

\comment{oli: this subsection was in the end of the section 4, I moved it here because most of the material needed was refering to the subsequent Section 5 ! this makes things a bit lengthy before arriving to the second integration of YMH, but I don't think we really have the choice}

We consider now the last steps in the construction of the integration of Yang-Mills-Higgs phase space. So now we assume that we have
\begin{itemize}
\item $p:P\to B$ a principal $G$-bundle;
\item $(F,\pi_F)$ a Poisson manifold;
\item $G\times F\to F$ a hamiltonian $G$-action on $(F,\pi_F)$ with equivariant moment map $J_F:F\to \g^*$.
\end{itemize}
Each choice of a principal connection $\theta:TP\to\gg$ yields a coupling Dirac structure on $E=P\times _G F$.

The fact that the action is hamiltonian implies that the $G$ action on the algebroid $T^*\! F$ is pre-hamiltonian, with pre-moment map (see Appendix A):
\begin{align*}\psi:\g\ltimes F &\longrightarrow   T^*\! F\\
                                     (\xi,m)& \longmapsto \d_m \left\langle J,\xi\right\rangle.
\end{align*}
Therefore, by Theorem \ref{int:ham:action}, $\psi$ integrates to a groupoid morphism:
\[\Psi:G^\circ\ltimes F \to \F,\]
where: 
\[ \F:=\Sigma(F)/\widetilde{\Psi}(\pi_1(G)\ltimes F).\] 
We will assume that $\widetilde{\Psi}(\pi_1(G)\times F)$ is embedded in $\Sigma(F)$, so that $\F$ is smooth. Clearly, $\G_V:=P\times_G \F$ is a symplectic groupoid integrating $\Ver^*=P\times_G T^*\!F$. 

The $G$-action on $F$ lifts to a Lie groupoid action $\Ac:G\to \Aut(\F)$, so we can apply the construction of the previous subsection: we obtain an action groupoid $\G(P)\ltimes\G_V\tto E$.  We now define a completely intransitive subgroupoid of $\G(P)\ltimes\G_V$, that we think as a model for the homotopy relation in the phase space.

\begin{defn}
The \textbf{curvature subgroupoid}, denoted by $\mathcal{C}\tto E$, is the subgroupoid $\C\subset \G(P)\ltimes\G_V$ given by:
\[\C:=\graph(\Psi_P\circ i) \subset \G(P)\ltimes\G_V,\]
where $\Psi_P:\Iso_P^\circ\times_B E\to\G_V$ is obtained by fibrating $\Psi:G^\circ\ltimes F \to \F$ along $P$ and $i:\Iso^\circ\to \Iso^\circ$ denotes the inversion.
\end{defn}

With the notations of Proposition \ref{prop:actiongrpd:formulas}, the curvature groupoid $\C$ is given explicitly by:
\begin{equation}
\label{curvaturegroupoid}
\C:=\set{[uh^{-1}:u:\Psi(h,x)]\in \G(P)\ltimes\G_V : u\in P,\, h\in G^\circ,\, x\in F}.
\end{equation}
This allows to prove the following.

\begin{prop}  The curvature groupoid $\C\tto E$ is a wide, normal, completely intransitive subgroupoid of $\G(P)\ltimes \G_V$. 
\end{prop}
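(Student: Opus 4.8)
The plan is to verify four properties in the coordinates $\G(P)\ltimes\G_V\cong(P\times P\times\F)/G$ supplied by Proposition \ref{prop:actiongrpd:formulas}: that $\C$ is a subgroupoid, and that it is wide, completely intransitive, and normal. The two facts I will lean on about the integrated pre-moment map $\Psi\colon G^\circ\ltimes F\to\F$ of Theorem \ref{int:ham:action} are that it is a groupoid morphism (so $\Psi(e,x)={\bf 1}_x$ and $\Psi(h_2,x)\cdot\Psi(h_1,h_1^{-1}\cdot x)=\Psi(h_2h_1,h_1^{-1}\cdot x)$), and that it is $G$-equivariant, $\Ac_g(\Psi(h,x))=\Psi(ghg^{-1},g\cdot x)$, inherited from the equivariance of $J$. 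That $\C$ is closed under product and inverse is then a short computation: composing $[uh_2^{-1}:u:\Psi(h_2,x)]$ with $[uh_1^{-1}:u:\Psi(h_1,x)]$ and applying the morphism identity returns an element of $\C$ with parameter $h_2h_1$, and dually for inverses — this is exactly the content of writing $\C=\graph(\Psi_P\circ i)$.

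\emph{Wide and completely intransitive.} Taking $h=e\in G^\circ$ gives $[ue^{-1}:u:\Psi(e,x)]=[u:u:{\bf 1}_x]={\bf 1}_{[u:x]}$, so $\C$ contains every unit and is wide. For intransitivity, the structure maps give, for $c=[uh^{-1}:u:\Psi(h,x)]$,
\[ \s(c)=[u:\s\Psi(h,x)]=[u:x],\qquad \t(c)=[uh^{-1}:\t\Psi(h,x)]=[uh^{-1}:h\cdot x]. \]
The defining relation of the associated bundle, $[u:x]=[uh^{-1}:h\cdot x]$, then forces $\s(c)=\t(c)$, so every arrow of $\C$ is an isotropy arrow and $\C$ is a bundle of subgroups of the isotropy bundle of $\G(P)\ltimes\G_V$.

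\emph{Normality.} For a completely intransitive wide subgroupoid this means the isotropy bundle is stable under conjugation by arbitrary arrows. So I fix $\gamma=[w:v:b]$ and $c\in\C$ based at $\s(\gamma)=[v:\s(b)]$. Using the $G$-freedom in the representative of $c$, the equivariance of $\Psi$, and the normality of $G^\circ$ in $G$ (so that conjugates of the parameter stay in $G^\circ$), I first normalise $c=[vh^{-1}:v:\Psi(h,\s(b))]$. A direct computation of the conjugate then yields
\[ \gamma\,c\,\gamma^{-1}=\bigl[\,w:wh:(b\cdot\Psi(h,h^{-1}\cdot\s(b)))\cdot\Ac_{h^{-1}}(b^{-1})\,\bigr], \]
and comparing with the standard form of a $\C$-element shows $\gamma c\gamma^{-1}\in\C$ to be \emph{equivalent} to the identity
\begin{equation}\tag{$\star$}
b\cdot\Psi(h,h^{-1}\cdot\s(b))\cdot\Ac_{h^{-1}}(b^{-1})=\Psi(h,h^{-1}\cdot\t(b)),\qquad b\in\F,\ h\in G^\circ.
\end{equation}

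I expect $(\star)$ to be the main obstacle. Rewritten as $\Ac_{h^{-1}}(b)=\Psi(h,h^{-1}\cdot\t b)^{-1}\cdot b\cdot\Psi(h,h^{-1}\cdot\s b)$, it asserts that the automorphism $\Ac_{h^{-1}}$ of $\F$ is implemented by conjugation by the bisection $x\mapsto\Psi(h,h^{-1}\cdot x)$ — the global shadow of the defining property of the pre-moment map, namely that $\Psi$ integrates $\psi(\xi,-)=\d\langle J,\xi\rangle$ whose Hamiltonian generators produce exactly the infinitesimal $G$-action underlying $\Ac$. I would prove $(\star)$ from the construction of $\Psi$ and $\Ac$ in Theorem \ref{int:ham:action} and Appendix A: it holds trivially for $h=e$, is multiplicative in $h$ by the morphism property of $\Psi$, so it suffices to check it infinitesimally and integrate, using source-connectedness of $\F$ and connectedness of $G^\circ$. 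Granting $(\star)$, all four properties hold and $\C$ is a wide, normal, completely intransitive subgroupoid.
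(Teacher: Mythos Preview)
Your proof is correct and follows essentially the same route as the paper: work in the coordinates of Proposition \ref{prop:actiongrpd:formulas}, check wideness and intransitivity directly, and reduce normality to a conjugation identity in $\F$.

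The one place where you take a detour is your identity $(\star)$. You describe it as ``the main obstacle'' and propose to prove it by an infinitesimal-plus-integration argument. But $(\star)$ is precisely equation \eqref{eq:int:ham:action1} of Theorem \ref{int:ham:action}, already established there. Indeed, applying \eqref{eq:int:ham:action1} with $h^{-1}$ acting on $b^{-1}$ gives
\[
\Ac_{h^{-1}}(b^{-1})=\Psi(h^{-1},\s(b))\cdot b^{-1}\cdot\Psi(h^{-1},\t(b))^{-1}
=\Psi(h,h^{-1}\cdot\s(b))^{-1}\cdot b^{-1}\cdot\Psi(h,h^{-1}\cdot\t(b)),
\]
using that $\Psi$ is a groupoid morphism; substituting this into the left side of $(\star)$ collapses it immediately. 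The paper simply invokes \eqref{eq:int:ham:action1} during the conjugation computation and arrives at the closed form $[vh^{-1}:v:\Psi(h,\t(a))]$ in one step. So your argument is sound, but you should recognise $(\star)$ as already proved rather than re-deriving it.
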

\begin{proof}
The result follows using expression \eqref{curvaturegroupoid} for $\C$, the compositions rules in Proposition \ref{prop:actiongrpd:formulas} and the equation \eqref{eq:int:ham:action1} in Theorem \ref{int:ham:action}.
The fact that $\C$ is a subgroupoid is rather straightforward. In order to see that it is normal, we pick any  $[u h^{-1}:u:\Psi(h,x)]\in\C$ and $[v:u:a]\in\G(P)\ltimes \G_V$ which are composable, i.e., such that $x=\s(a)$, and we find that:
\[ [v:u:a]\cdot [u h^{-1}:u:\Psi(h,x)]\cdot[v:u:a]^{-1}=[v h^{-1}:v:\Psi(h,x)],\]
is an element in $\C$.
\end{proof}

Finally, putting all together, we conclude that:

\begin{thm} 
\label{thm:Yang:Mills:integration}
Let $(P,G,F)$ be a classical Yang-Mills-Higgs setting and $\theta:TP\to \g$ a principal connection.  Let $L$ be the corresponding coupling Dirac structure on $E=P\times_G F$ and
assume that:
\begin{enumerate}[(i)]
\item the Poisson manifold $(F,\pi_F)$ is integrable;
\item the groupoid $\widetilde{\Psi}(\pi_1(G)\times F)$ is embedded in $\Sigma(F)$.
\end{enumerate}
Then the quotient groupoid $\G(P)\ltimes {\G_V} /\, \C$ integrates $(E,L)$.
\end{thm}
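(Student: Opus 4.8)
The plan is to verify the two defining properties of an integration: that $\G(P)\ltimes\G_V/\C$ is a smooth Lie groupoid over $E$, and that its Lie algebroid is isomorphic to $L$. For smoothness, assumption (i) produces the source-connected symplectic groupoid $\F\tto F$, while assumption (ii) guarantees that $\F=\Sigma(F)/\widetilde\Psi(\pi_1(G)\ltimes F)$ is smooth; hence $\G_V=P\times_G\F$ and the action groupoid $\G(P)\ltimes\G_V$ are smooth. By the preceding Proposition $\C$ is wide, normal and completely intransitive, and assumption (ii) makes it embedded through the explicit formula \eqref{curvaturegroupoid}. Passing to the quotient amounts to the right action of this embedded bundle of isotropy subgroups by translations $g\mapsto g\cdot c$; this action is free by cancellation and proper by embeddedness of $\C$, so $\G(P)\ltimes\G_V/\C$ is again a Lie groupoid over $E$.

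Next I would compute the Lie algebroids. By Remark \ref{rem:algebrd:action:grpd} the Lie algebroid of $\G(P)\ltimes\G_V$ is the action algebroid $p^*(TP/G)\ltimes\Ver^*$, with anchor \eqref{eq:anchor:semi} and brackets \eqref{eq:brackets:semi}. Since $\C$ is normal, $A(\C)$ is an ideal $\mathfrak{c}$, and since $\C$ is completely intransitive this ideal is supported on the isotropy direction. Differentiating \eqref{curvaturegroupoid} and using the formula \eqref{eq:int:ham:action1} for $\Psi$ identifies $\mathfrak{c}$ with the graph of (minus) the pre-moment map $\psi$ fibered along $P$, namely $\psi_P:\g_P\to\Ver^*$ composed with the inclusion $\g_P=\ker\sharp\hookrightarrow TP/G$, the sign arising from the inversion $i$ in $\C=\graph(\Psi_P\circ i)$. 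Consequently $A(\G(P)\ltimes\G_V/\C)=\bigl(p^*(TP/G)\ltimes\Ver^*\bigr)/\mathfrak{c}$, and it remains to identify this reduced algebroid with $L$.

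To build the isomorphism I would use the connection $\theta$ to split the Atiyah sequence as $TP/G=\Hor_\theta\oplus\g_P$, with $\Hor_\theta\cong TB$. Because $\mathfrak{c}$ is the graph of $-\psi_P$ over the $\g_P$-factor, every class has a unique representative $(h_\theta(v),\beta)$ with $v\in TB$ and $\beta\in\Ver^*$; explicitly $(\eta,0)\equiv(0,\psi_P(\eta))$ for $\eta\in\g_P$. This gives a vector bundle isomorphism onto $TB\oplus\Ver^*\cong\graph(\omega_H)\oplus\graph(\pi_V)=L$ sending $(h_\theta(v),\beta)\mapsto h^*(v)+\beta$. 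The anchors agree by comparing \eqref{eq:anchor:semi} with \eqref{eq:splitbracket0}, since the symbol $h_\theta(v)_E$ is exactly the $\Gamma$-horizontal lift $h(v)$. For the bracket I would check the three cases of Proposition \ref{prop:splitbrackets}; the relations $[\alpha,\beta]_L=[\alpha,\beta]_V$ and $[h^*(v),\alpha]_L=\Lie_{h(v)}\alpha$ follow immediately from \eqref{eq:brackets:semi}.

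The main obstacle is the remaining bracket $[h^*(v),h^*(w)]$. In the Atiyah algebroid one has $[h_\theta(v),h_\theta(w)]_{TP/G}=h_\theta([v,w])+\kappa(v,w)$, where $\kappa(v,w)\in\g_P$ is the curvature of $\theta$; reducing modulo $\mathfrak{c}$ replaces the $\g_P$-term by $\psi_P(\kappa(v,w))=\d_V\langle\mu_F,\omega_\theta(v,w)\rangle=\d_V\omega_H(h(v),h(w))$, the last equality being the defining formula of $\omega_H$ in the Yang--Mills--Higgs construction. This reproduces exactly \eqref{eq:splitbracket3}, so that the curvature identity \eqref{eq:curvature} is precisely the compatibility forcing the reduced bracket to coincide with the Lie bracket of $L$. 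The genuine work here is the careful bookkeeping of the signs coming from the inversion $i$ in the definition of $\C$ and from the curvature convention, which I expect to combine into the $+$ sign of \eqref{eq:splitbracket3}; once this matching is confirmed, the isomorphism of Lie algebroids is complete and $\G(P)\ltimes\G_V/\C$ integrates $(E,L)$.
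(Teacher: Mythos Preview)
Your proposal is correct and follows essentially the same route as the paper: identify the Lie algebroid of $\G(P)\ltimes\G_V$ with $p^*(TP/G)\ltimes\Ver^*$, recognise $A_\C$ as the graph of $-\psi_P$ over the isotropy factor $\g_P=\ker\sharp_A$, use the connection $\theta$ to split $TP/G\simeq TB\oplus\g_P$ so that the quotient becomes $TB\times_B\Ver^*$, and then match anchor and brackets against Proposition \ref{prop:splitbrackets} --- your explicit handling of the curvature term via the Atiyah bracket and the identity $\psi_P(\kappa(v,w))=\d_V\omega_H(h(v),h(w))$ is exactly the computation the paper summarises in one line. The only presentational difference is in the smoothness step: instead of invoking directly that the quotient by a closed embedded normal bundle of groups is a Lie groupoid, the paper recasts $\G(P)\ltimes\G_V/\C$ as an ``associated bundle'' $\G(P)\ltimes_{\Iso^\circ_P}\G_V$ for the right $\Iso^\circ_P$-action $(b,a)\cdot g=(bg^{-1},\lambda^P_g(a))$, where freeness and properness are inherited from the right multiplication of $\Iso^\circ_P$ on $\G(P)$; this is equivalent to your argument but makes properness manifest without appealing to closedness of $\C$.
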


\begin{proof}
As we saw above, the Lie algebroid of $\G(P)\ltimes {\G_V}$ is given by $A\ltimes A_V$, where $A=TP/G$ and $A_V=\Ver^*$. Furthermore, the principal connection induces a splitting of the Attiyah sequence, and we have an identification $TP/G\simeq TB\oplus \ker \sharp_{A}$. 
With this identification, the Lie algebroid $A_\C$ of $\C$ lies in $TP/G\ltimes \Ver^*\simeq (TB\oplus \ker\sharp_{A})\ltimes \Ver^*$  as: 
\[ A_\C=\set{\bigl(0,\xi,-\psi(\xi)\bigr)\in (TB\oplus \ker \sharp_A )\ltimes \Ver^* : \xi \in \ker \sharp_A},\]
and the quotient $(TB\oplus \ker \sharp_A )\ltimes \Ver^*/A_\C$ identifies with $TB\times_B \Ver^*$, with canonical projection given by $\pi(X,\xi,\al)=(X,\al+\psi(\xi))$. It now follows from expressions \eqref{eq:anchor:semi} and \eqref{eq:brackets:semi} for the anchor and the brackets that the Lie algebroid structure on $A\ltimes\Ver^*$ descends to a Lie algebroid structure on $TB\times_B A_V$ whose brackets and anchor are the same as those given in Proposition \ref{prop:splitbrackets}. Hence $A\ltimes \Ver^*\!/A_\C$ and $L$ are isomorphic as Lie algebroids.

For the smoothness of the quotient, we observe that $\G(P)\ltimes {\G_V} /\, \C$ can also be thought of as an ``associated bundle" $\G(P)\ltimes_{\Iso^\circ_P} \G_V$. Indeed, there is an action of the \emph{bundle of} Lie groups $\Iso^\circ_P$ on $\G(P)\ltimes {\G_V}$ which can be described as follows. On the one hand, the Lie groupoid morphism $\Psi$ induces an action $\lambda$ of $G^\circ$ on $\F$ by left multiplication: $\lambda_h(a):=\Psi(h,x) \cdot a,$ where $h\in G^\circ$, and $x:=\s(a))$. Fibering along $P$, we obtain an action of the bundle of Lie groups $\Iso_P^\circ$ on $\G_V$:
\[\lambda^P_{[u:h]}([u:a]):=[u:\Psi(h,x)\cdot a].\]
Here we use the identification $\Iso^\circ_P\simeq P\times_G G^\circ$, to write an element of $\Iso^\circ_P$ as a pair $[u:h]$. Note that this action is well defined by \eqref{eq:int:ham:action2}. On the other hand, $\Iso_P^\circ$ acts on $\G(P)$ by right multiplication, which is a proper and free action. The two actions together give a proper and free action of $\Iso^\circ_P$ on  $\G(P)\ltimes \G_V$:
\[ g\cdot (b,a):=(bg^{-1}:\lambda^P_g(a)). \]
and the quotient is the ``associated bundle" $\G(P)\ltimes_{\Iso^\circ_P} \G_V$. 

We claim that  $\G(P)\ltimes_{\Iso^\circ_P} \G_V$ can be identified with $\G(P)\ltimes {\G_V} \!/ \C$. This follows by observing that any 
$g\in \Iso_P^\circ$ can be written as $g=[u:u h]\in \G(P)$  so that (see Theorem \ref{int:ham:action}):
\[g\cdot(b,a)=(bg^{-1},\lambda_g^P(a))=(b,a)\cdot c\] 
where $c:=([u h^{-1}:u:\Psi(h)])\in \C$. The assignment $c\leftrightarrow g$ being 1-1, one concludes that the two quotients coincide.
\end{proof}

\begin{rem}
Consider the Hopf fibration $P=\mathbb{S}^3\to \mathbb{S}^2$, seen as a $\mathbb{S}^1$-principal bundle,  and $F=\mathbb{R}$ acted upon trivially  with momentum map $f:F\to \mathbb{R}$ any smooth function, as in Example \ref{ex:Hopf}. Then the second condition in Theorem \ref{thm:Yang:Mills:integration} fails if $f$ has a critical point as explained in Example \ref{ex:non-smooth-quotient}.
\end{rem}

Theorem \ref{thm:Yang:Mills:integration} shows that the groupoid structure of $\G(L)$ does not depend on the choice of principal $G$-bundle connection. In other words, two coupling Dirac structures associated with Yang-Mills data with the same principal $G$-bundle and hamiltonian $G$-action, but different principle bundle connections give rise to the same Lie groupoid. Note, however, that the presymplectic forms will be distinct, as it is clear from their geometric data given in Proposition \ref{prop:int:geom:data}. 

We can also give an explicit description of the presymplectic form $\Omega$ on $G(L)$, as follows. First, we use Proposition \ref{prop:actiongrpd:formulas} to identify $\G(P)\ltimes {\G_V}\simeq (P\times P\times \F)/G$. We then construct a presymplectic form $\widetilde{\Omega}$ on $\G(P)\ltimes {\G_V}$: we have a closed two form on $P\times P\times \F$ given by
\[ \widetilde{\Omega}:=p_\F^*\Omega_\F+\d\langle \theta,\overline{\mu}\rangle_1-\d\langle \theta,\overline{\mu}\rangle_2,\] 
where  $p_\F:P\times P\times \F\to \F$ is the projection, and $\d\langle \theta,\bar{\mu}\rangle_i$ denotes the closed 2-form in $\Omega^2(P\times P\times \F)$ obtained by differentiating the 1-form $\alpha_i\in \Omega^1(P\times P\times \F)$ given by
\[ \alpha_i|_{(u_1,u_2,g)}(v_1,v_2,w):= \langle \theta|_{u_i}(v_i),\overline{\mu}(g)\rangle. \]
Here $\overline{\mu}:\F\to\gg^*$ denotes the moment map for the lifted $G$-action on $\F$, so that $\overline{\mu}=\mu\circ\t-\mu\circ\s$.
One checks easily that the closed 2-form $\widetilde{\Omega}$ is basic for the the $G$-action on $P\times P\times \F$, so it descends to a multiplicative 2-form in the quotient $(P\times P\times \F)/G\simeq \G(P)\ltimes {\G_V}$. 

Finally, one checks that resulting multiplicative 2-form on $\G(P)\ltimes {\G_V}$ further descends to the quotient $\G(P)\ltimes {\G_V}/\C$, giving a closed, multiplicative 2-form $\Omega_\G$ satisfying the non-degeneracy condition \eqref{eq:non-degeneracy}. A more or less tedious computation shows that the target map $\t:(\G(L),\Omega_\G)\to (E,L)$ is a forward Dirac map. Summarizing this discussion, we have:

\begin{cor}
Under the conditions of Theorem \ref{thm:Yang:Mills:integration}, the presymplectic form on the groupoid $G(L)=\G(P)\ltimes {\G_V}/\,\C$ is the quotient of the closed 2-form:
\[ \widetilde{\Omega}:=p_\F^*\Omega_\F+\d\langle \theta,\overline{\mu}\rangle_1-\d\langle \theta,\overline{\mu}\rangle_2.\]
\end{cor}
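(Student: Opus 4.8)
The plan is to realize $\Omega_\G$ as the descent of $\widetilde{\Omega}$ through two successive quotients and then to identify the result with the canonical multiplicative presymplectic form of $\G(L)$ by appealing to uniqueness. Recall from Section~\ref{sub:symp:grpds} that on a source connected groupoid integrating $L$ the compatible multiplicative $2$-form is unique; since $\G(L)=\G(P)\ltimes\G_V/\C$ is source connected, it will suffice to exhibit \emph{some} closed multiplicative $2$-form on it that satisfies the non-degeneracy condition \eqref{eq:non-degeneracy} and for which $\t$ is a forward Dirac map onto $(E,L)$, and to check that this form is the descent of $\widetilde{\Omega}$.

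I would first note that $\widetilde{\Omega}$ is closed, since $p_\F^*\Omega_\F$ is closed ($\Omega_\F$ being symplectic) and the other two summands are exact. Next I would check that $\widetilde{\Omega}$ is basic for the diagonal action $(u_1,u_2,a)\mapsto(u_1g,u_2g,\Ac_g^{-1}(a))$ of $G$ on $P\times P\times\F$: invariance follows from the equivariance $R_g^*\theta=\Ad(g^{-1})\theta$ of the connection, the equivariance of $\overline{\mu}$, and the fact that each $\Ac_g$ preserves $\Omega_\F$ (being a symplectic groupoid automorphism), while horizontality follows by contracting with the fundamental vector field $(\xi_P(u_1),\xi_P(u_2),-\xi_\F(a))$ of $\xi\in\g$ and using the relations $\theta(\xi_P)=\xi$ and $i_{\xi_\F}\Omega_\F=\d\langle\overline{\mu},\xi\rangle$ to cancel all terms. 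This produces a descended $2$-form $\widehat{\Omega}$ on $(P\times P\times\F)/G\simeq\G(P)\ltimes\G_V$ via Proposition~\ref{prop:actiongrpd:formulas}. I would then verify multiplicativity of $\widehat{\Omega}$ on composable arrows $[w:v:a']\cdot[v:u:a]=[w:u:a'\cdot a]$: the $p_\F^*\Omega_\F$ term is multiplicative because $\Omega_\F$ is, and the two connection terms add up correctly since index $1$ records the target frame $w$, index $2$ the source frame $u$, and $\overline{\mu}=J_F\circ\t-J_F\circ\s$ is additive on composable pairs, $\overline{\mu}(a'\cdot a)=\overline{\mu}(a')+\overline{\mu}(a)$.

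The hard part will be the final descent, from $\G(P)\ltimes\G_V$ to the quotient by the curvature groupoid $\C$; equivalently, showing that $\widehat{\Omega}$ is basic for the free and proper $\Iso_P^\circ$-action $g\cdot(b,a)=(bg^{-1},\lambda^P_g(a))$ used in the proof of Theorem~\ref{thm:Yang:Mills:integration}. Writing $g=[u:h]$, the translation $\lambda^P_g$ multiplies the $\F$-factor by $\Psi(h,x)$, and the connection terms $\d\langle\theta,\overline{\mu}\rangle_i$ are precisely engineered so that the extra directions introduced by $\C$ lie in the kernel of $\widehat{\Omega}$ and are preserved by it. This is a reduction-type computation; I expect it to rely on the composition rules of Proposition~\ref{prop:actiongrpd:formulas}, on the integrated moment map relation \eqref{eq:int:ham:action1} of Theorem~\ref{int:ham:action}, and on the resulting identity $\overline{\mu}(\Psi(h,x))=J_F(h\cdot x)-J_F(x)$ together with the infinitesimal relation $\psi(\xi,m)=\d_m\langle J_F,\xi\rangle$ linking $\Psi$ to the connection. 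This step is the one most likely to require careful bookkeeping of the $G$- and $\Iso_P^\circ$-weights of each factor.

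Granting the descent, $\widetilde{\Omega}$ yields a closed multiplicative $2$-form $\Omega_\G$ on $\G(L)$, and to finish I would verify the non-degeneracy \eqref{eq:non-degeneracy} and that $\t$ is forward Dirac. The most economical way is to compute the geometric data of $\Omega_\G$ for the fibration $\tilde{p}:\G(L)\to\Pi(B)$ and match it against $(\Omega_V,\tilde{\Gamma},\Omega_H)$ in Proposition~\ref{prop:int:geom:data}: one expects the fiberwise form to restrict to $\Omega_\F$, the horizontal lift to be \eqref{eq:HOR}, and the horizontal $2$-form to be \eqref{eq:OMEGAH}, which together give both non-degeneracy and the forward Dirac property. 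Combined with the uniqueness invoked at the outset, this identifies the descent of $\widetilde{\Omega}$ with $\Omega_\G$ and proves the Corollary.
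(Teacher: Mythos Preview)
Your proposal is correct and follows essentially the same route as the paper. The paper does not give a formal proof of this Corollary; the discussion preceding it is the proof, and it consists of exactly the steps you outline: check that $\widetilde{\Omega}$ is closed and $G$-basic on $P\times P\times\F$, hence descends to a multiplicative form on $\G(P)\ltimes\G_V$; then check it further descends through the curvature groupoid $\C$; finally verify non-degeneracy and that $\t$ is forward Dirac. The only minor difference is in the last identification: the paper suggests checking directly that $\t:(\G(L),\Omega_\G)\to(E,L)$ is forward Dirac (``a more or less tedious computation''), whereas you propose matching the geometric data of $\Omega_\G$ against the description in Proposition~\ref{prop:int:geom:data} and then invoking uniqueness of the compatible multiplicative form on a source-connected integration. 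These are equivalent verifications, and your route has the advantage of reusing Proposition~\ref{prop:int:geom:data} rather than redoing the Dirac computation from scratch.
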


The integrability conditions in Theorem \ref{thm:Yang:Mills:integration} can be made more explicit. On the one hand, the integrability of the fiber type $(F,\pi_F)$ follows under the general theory developed in \cite{CrFe1}, and can be expressed in terms of monodromy maps $\partial: \pi_2(S,x)\to\G(\gg_x)$, where $S$ is the symplectic leaf of $F$ through $x$ and $\gg_x=\ker\pi_F^\sharp|_x$ is the isotropy Lie algebra at $x$. On the other hand, condition (ii) can be treated by the same methods as in \cite[Section 4.3]{BrFe2}, and one gets another monodromy type map $\pi_1(G)\to \F_m$ controlling (ii). This will be treated elsewhere.

\section{Integration of coupling Dirac structures II}
\label{gpd:construction}
A general coupling Dirac structure may not come from a principal bundle with structure group a finite dimensional Lie group. For instance, this is the case if the holonomy group induced by the connection (\emph{i.e.}, the group spanned by the holonomy along loops in the base) is not a finite dimensional subgroup of the Poisson automorphisms of the fiber. In such cases, one needs a formulation of the construction of Section \ref{sec:int:YMH2} which avoids infinite dimensional reductions. In this section, we will take advantage of the fact that $L$ fits into a Lie algebroid extension, to reformulate the construction given in Section \ref{sec:int:YMH2}, without any mentioning to to these infinite dimensional group quotients. We follow the ideas of \cite{Br} in order to describe $L$-paths and $L$-homotopies. 

Recalling that $\Ver^*=\graph(\pi_V)$, we know that $L$ is a Lie algebroid extension:
\[\SelectTips{cm}{}\xymatrix@C=15pt{  \Ver^*  \ \ar@{^{(}->}@<-0.25pt>[r]&   L                \ar@{->>}[r]<-0.25pt> &   TB ,}\]
which splits. The notion of holonomy makes sense for any Lie algebroid extension with a splitting (see \cite[Sec.\,2.1]{Br}), and in our situation, given a $TB$-path $\dot{\gamma}_B\in P(TB)$, the holonomy is a Lie algebroid morphism:
\[\Phi_{\gamma_B}:\Ver^*|_{E_{\gamma_B(0)}}\to \Ver^*|_{E_{\gamma_B(1)}}.\]
It will be useful to restrict $\gamma_B$ to a path $[0,t]\to TB$, where $t\in[0,1]$. The corresponding holonomy will then be denoted by:
\[\Phi^{\gamma_B}_{t,0}:\Ver^*|_{E_{\gamma_B(0)}}\to \Ver^*|_{E_{\gamma_B(t)}}.\]

In the case of a coupling Dirac structure, there is another notion of holonomy to be taken into account, namely, the one induced by the \emph{usual} Ehresmann connection $\Hor$. Given a path $\gamma_B:[0,1]\to B$ it gives rise to a holonomy map
\[ \phi^{\gamma_B}:E_{\gamma_B(0)}\to E_{\gamma_B(1)}. \] 
Again, restricting $\gamma_B$ to a path $[0,t]\to B$ the corresponding holonomy will be denoted 
\[ \phi^{\gamma_B}_{t,0}:E_{\gamma_B(0)}\to E_{\gamma_B(t)}.\]  

The two holonomies are related in a simple way:

\begin{prop}
\label{prop:holonomies}
 The holonomy $\Phi^{\gamma_B}_{t,0}$ induced by the connection $\graph{(\omega_H)}$ on $L$ is related to the holonomy 
 $\phi^{\gamma_B}_{t,0}$ induced by $\Hor$ on $TE$ by: 
 \[\Phi^{\gamma_B}_{t,0}=(\phi_{0,t}^{\gamma_B})^*.\]
\end{prop}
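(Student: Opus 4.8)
The plan is to reduce the statement to the general fact that the parallel transport of a Lie-derivative connection along a flow is the natural lift of that flow to the relevant tensor bundle. First I would record precisely what each holonomy is. On the one hand, $\phi^{\gamma_B}_{t,0}$ is by definition the parallel transport of the Ehresmann connection $\Hor$, hence the flow $\psi_{t,0}\colon E_{\gamma_B(0)}\to E_{\gamma_B(t)}$ of the time-dependent horizontal lift $X_t:=h(\gamma_B'(t))$; completeness guarantees this flow exists. On the other hand, the holonomy $\Phi^{\gamma_B}_{t,0}$ of the splitting $\graph(\omega_H)$ of the extension \eqref{eq:Dirac:extension} is, following \cite[Sec.\,2.1]{Br}, the parallel transport of the covariant derivative on the kernel $\Ver^*$ defined by $\nabla_v\alpha:=[h^*(v),\alpha]_L$. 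By the splitting bracket \eqref{eq:splitbracket2} of Proposition \ref{prop:splitbrackets}, this covariant derivative is simply
\[ \nabla_v\alpha=\Lie_{h(v)}\alpha,\qquad v\in\X(B),\ \alpha\in\Gamma(\Ver^*). \]
Since $h(v)$ is $p$-projectable, its flow maps fibers to fibers, so $\Lie_{h(v)}$ is a well-defined operator on vertical forms (it agrees, on vertical vectors, with the vertical Lie derivative), and $\nabla$ genuinely transports the restrictions $\Ver^*|_{E_{\gamma_B(0)}}$ to $\Ver^*|_{E_{\gamma_B(t)}}$.

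The second step is the core computation. A path of vertical forms $\alpha_t\in\Gamma(\Ver^*|_{E_{\gamma_B(t)}})$ is $\nabla$-parallel exactly when its covariant derivative along $\gamma_B$ vanishes; pulling everything back to the fixed fiber $E_{\gamma_B(0)}$ by the flow, the Lie-derivative formula for $\nabla$ gives $\psi_{t,0}^*(\nabla_{\gamma_B'}\alpha)=\frac{\d}{\d t}\bigl(\psi_{t,0}^*\alpha_t\bigr)$, so the parallel condition is equivalent to $\frac{\d}{\d t}\bigl(\psi_{t,0}^*\alpha_t\bigr)=0$. Hence $\alpha_t$ is the parallel transport of $\alpha_0$ if and only if $\psi_{t,0}^*\alpha_t=\alpha_0$, that is,
\[ \alpha_t=(\psi_{t,0}^{-1})^*\alpha_0=(\psi_{0,t})^*\alpha_0=(\phi^{\gamma_B}_{0,t})^*\alpha_0. \]
This is precisely the claimed identity $\Phi^{\gamma_B}_{t,0}=(\phi^{\gamma_B}_{0,t})^*$.

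Finally I would check consistency with the Lie-algebroid nature of $\Phi$: the covariant derivative $[h^*(v),-]_L$ acts by derivations of the kernel bracket (a consequence of the Jacobi identity in $L$), so $\Phi^{\gamma_B}_{t,0}$ must be a morphism of the Lie algebroid $\Ver^*=\graph(\pi_V)$. This matches the right-hand side, since $\phi^{\gamma_B}_{t,0}$ is a Poisson diffeomorphism between fibers (parallel transport preserves $\pi_V$ by condition (ii) of Proposition \ref{prop:obstr:Dirac}), so its cotangent lift $(\phi^{\gamma_B}_{0,t})^*$ is automatically an isomorphism of the cotangent algebroids $\Ver^*|_{E_{\gamma_B(0)}}\cong T^*E_{\gamma_B(0)}$ and $\Ver^*|_{E_{\gamma_B(t)}}\cong T^*E_{\gamma_B(t)}$.

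The main obstacle I expect is not the computation itself but the bookkeeping of definitions: one must match Brahic's abstract holonomy of a split extension with the concrete parallel transport of $\nabla_v=\Lie_{h(v)}$, keep the identification $\Ver^*\cong\Hor^0$ consistent, and track the direction of dualization carefully so that the inverse flow appears, i.e.\ so that one obtains $(\phi^{\gamma_B}_{0,t})^*$ rather than $(\phi^{\gamma_B}_{t,0})^*$.
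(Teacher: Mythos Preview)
Your proposal is correct and follows essentially the same approach as the paper's own proof, which simply invokes the identification $\Ver^*=\graph(\pi_V)$ and the bracket formula \eqref{eq:splitbracket2}. You have merely unpacked what the paper leaves implicit: that the abstract holonomy of the split extension is parallel transport for $\nabla_v\alpha=[h^*(v),\alpha]_L=\Lie_{h(v)}\alpha$, and that such parallel transport is the pullback by the inverse flow.
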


\begin{proof}
The result follows directly from the identification $\Ver^*=\graph(\pi_V)$ and from the particular form of the bracket \eqref{eq:splitbracket2}. 
\end{proof}

Recall that a Lie algebroid extension is called a \textbf{fibration} whenever the Ehresmann connection is complete \cite{BZ}. It follows from Proposition \ref{prop:holonomies} that \eqref{eq:Dirac:extension} is a fibration whenever the Ehresmann connection $\Hor$ is complete. In the sequel, we will always assume that it is the case.

\subsection{Splitting $L$-paths and $L$-homotopies}
\label{splitting:paths}

 We see from Proposition \ref{prop:Dirac:extension} that any $L$-path $a$ over $\gamma:=p_L\circ a$ decomposes uniquely as a sum:
               \begin{equation}\label{eq:splitpath0} a(t)=h^*(\dot{\gamma}_B(t))_{\gamma(t)}+a_V(t)\end{equation}
where $\gamma_B:=p\circ\gamma$.
In this decomposition, neither $t\mapsto h^*(\dot{\gamma}_B(t))_{\gamma(t)}$, nor $t\mapsto a_V(t)$ is an $L$-path in general. However, one may wonder if it is possible to somehow 'split' paths in $P(L)$ into horizontal and vertical parts. The next proposition shows how this can be done:

\begin{prop}[Splitting $L$-paths]\label{splitpath}
Let $L$ be a coupling Dirac structure on a fibration $p:E\to B$. If the associated connection $\Gamma$ is complete, then there is an isomorphism of Banach manifolds:
\[\begin{array}{ccc}
 P(L)   &\longrightarrow& P(TB)\, {}_\s\!\times_{\,\t\circ p} P(\Ver^*)\\
    a      &\longmapsto&(\dot{\gamma}_B,\tilde{a}),
\end{array}\]
where the couple $(\dot{\gamma}_B,\tilde{a})$ is defined in the following way:
\begin{equation}
\label{gammaB}                     
\dot{\gamma}_B :=\d p\circ\sharp a, \quad \tilde{a}_t:=a_V(t)\circ \d\phi^{\gamma_B}_{t,0},
\end{equation}
where $\phi^{\gamma_B}_{t,0}:E_{\gamma_B(0)}\to E_{\gamma_B(t)}$ denotes the holonomy along $\gamma_B$.
\end{prop}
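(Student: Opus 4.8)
The plan is to read off the two components directly from the canonical splitting of $L$-paths. By Proposition \ref{prop:Dirac:extension} and the decomposition \eqref{eq:Dirac:components2}, any $L$-path $a$ over $\gamma=p_L\circ a$ decomposes uniquely as in \eqref{eq:splitpath0}, $a(t)=h^*(\dot\gamma_B(t))_{\gamma(t)}+a_V(t)$ with $a_V(t)\in\Ver^*_{\gamma(t)}$ and $\gamma_B=p\circ\gamma$. The horizontal datum is then $\dot\gamma_B=\d p\circ\sharp a$, which is automatically a $TB$-path over $\gamma_B$, the anchor of $TB$ being the identity. The vertical datum is $\tilde a$ defined by \eqref{gammaB}; since the holonomy is $\phi^{\gamma_B}_{0,t}\colon E_{\gamma_B(t)}\to E_{\gamma_B(0)}$, the base path $c(t):=\phi^{\gamma_B}_{0,t}(\gamma(t))$ of $\tilde a$ stays inside the single fibre $E_{\gamma_B(0)}$. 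Consequently $\t\bigl(p(\tilde a)\bigr)=\gamma_B(0)=\s(\dot\gamma_B)$, so the pair $(\dot\gamma_B,\tilde a)$ indeed lands in the fibre product, and the assignment is well defined as a map of sets.

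The heart of the proof is to verify that $\tilde a$ is a genuine $\Ver^*$-path, i.e.\ that $\pi_V^\sharp(\tilde a_t)=\dot c(t)$. I would compute both sides separately. On the one hand, condition (ii) of Proposition \ref{prop:obstr:Dirac} ($\Lie_{h(v)}\pi_V=0$) says that parallel transport preserves $\pi_V$, so each $\phi^{\gamma_B}_{t,0}$ is a fibrewise Poisson diffeomorphism; this is exactly the content recorded in Proposition \ref{prop:holonomies}, and it yields the intertwining
\[
\pi_V^\sharp\bigl(a_V(t)\circ\d\phi^{\gamma_B}_{t,0}\bigr)=\d\phi^{\gamma_B}_{0,t}\bigl(\pi_V^\sharp(a_V(t))\bigr)=\d\phi^{\gamma_B}_{0,t}\bigl(\dot\gamma(t)-h(\dot\gamma_B(t))\bigr),
\]
where in the last step I use \eqref{eq:splitbracket0} to identify $\pi_V^\sharp(a_V(t))$ with the vertical part of $\dot\gamma(t)=\sharp a(t)$. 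On the other hand, differentiating $c(t)=\phi^{\gamma_B}_{0,t}(\gamma(t))$ and using that $\phi^{\gamma_B}_{t,0}$ is the flow of the horizontal lift $h(\dot\gamma_B)$ produces two terms: a ``frozen'' term $-\d\phi^{\gamma_B}_{0,t}\bigl(h(\dot\gamma_B(t))\bigr)$ coming from the $t$-dependence of the holonomy, and a term $\d\phi^{\gamma_B}_{0,t}(\dot\gamma(t))$ from moving along $\gamma$. The horizontal contributions cancel exactly, leaving $\dot c(t)=\d\phi^{\gamma_B}_{0,t}\bigl(\dot\gamma(t)-h(\dot\gamma_B(t))\bigr)$, which matches the previous display. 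Keeping track of the two differentiations of $c$ and of the correct direction of the Poisson intertwining is the step I expect to be the most delicate.

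To finish I would exhibit the inverse explicitly: given a matching pair $(\dot\gamma_B,\tilde a)$ with base path $c$ in $E_{\gamma_B(0)}$, set $\gamma(t):=\phi^{\gamma_B}_{t,0}(c(t))$, $a_V(t):=\tilde a_t\circ\d\phi^{\gamma_B}_{0,t}\in\Ver^*_{\gamma(t)}$, and $a(t):=h^*(\dot\gamma_B(t))_{\gamma(t)}+a_V(t)$. Reversing the computation above — now using $\pi_V^\sharp(\tilde a_t)=\dot c(t)$ together with the flow equation for $\phi^{\gamma_B}_{t,0}$ — shows $\sharp a(t)=\dot\gamma(t)$, so $a\in P(L)$; by construction the two assignments are mutually inverse. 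Finally, completeness of $\Gamma$ guarantees that $\phi^{\gamma_B}_{t,0}$ is defined for all $t\in I$ and all base paths, and the standard smooth dependence of the holonomy on $\gamma_B$ (solution of an ODE), together with the fibrewise-linear nature of the remaining operations, shows that both the map and its inverse are smooth for the Banach-manifold structures on $P(L)$, $P(TB)$ and $P(\Ver^*)$ (as in \cite{CrFe3,Br}); this upgrades the set-theoretic bijection to an isomorphism of Banach manifolds.
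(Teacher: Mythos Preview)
Your argument is correct. The paper's own proof is a one-line citation: it invokes the general splitting result for $A$-paths in a Lie algebroid extension with complete Ehresmann connection (\cite[Prop.~4.1]{Br}), and then uses Proposition~\ref{prop:holonomies} to translate the abstract extension-holonomy $\Phi^{\gamma_B}_{t,0}$ into the concrete pull-back $(\phi^{\gamma_B}_{0,t})^*$ by the usual parallel transport. What you have written is essentially that cited result, unpacked and verified directly in the case at hand: your use of $\Lie_{h(v)}\pi_V=0$ to see that parallel transport intertwines $\pi_V^\sharp$ is exactly the specialization of \cite[Prop.~4.1]{Br} to the extension $\Ver^*\hookrightarrow L\twoheadrightarrow TB$, and your flow computation for $\dot c(t)$ is the standard ODE identity underlying that proof. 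The advantage of your approach is that it is self-contained and makes the role of the Poisson-preserving holonomy completely transparent; the advantage of the paper's approach is that it places the statement in the general framework of Lie algebroid fibrations, where the same argument yields analogous splittings for arbitrary extensions, not just those coming from coupling Dirac structures.
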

\begin{proof}
This follows from \cite[Prop. 4.1]{Br} and Proposition \ref{prop:holonomies}.
\end{proof}

One should think of the couple $(\dot{\gamma}_B,\tilde{a})$ as a concatenation of $L$-paths of the form $h^*(\dot{\gamma}_B)\cdot\tilde{a}$. Here, $h^*(\dot{\gamma}_B)$ denotes the $L$-path defined by:
\begin{equation}\label{eq:path:horizontal:lift}h^*(\dot{\gamma}_B)(t):=h^*(\dot{\gamma}_B(t))_{\phi_{t,1}^{\gamma_B}(y)},
\end{equation}
 where $y=\s(a)$. Notice that the $L$-path \eqref{eq:path:horizontal:lift} is different from the horizontal component appearing in \eqref{eq:splitpath0} since the base paths are different. In particular, $h^*(\dot{\gamma}_B)$ as defined in \eqref{eq:path:horizontal:lift} is \emph{always} a $L$-path by construction.

 Then Proposition \ref{splitpath} can be illustrated in a simple way as follows:\vspace{5pt}
\[\xymatrix@R=10pt{
                    & \phi_{\gamma_B}^{-1}(y)  \ar@/^0.5pc/[rr]^{\, h^*(\dot{\gamma}_B)\, }&  &    *+[r]{ y}                                                                                                \\
                    &   &                             &                                                                       &L\ar[dd]_{} \\
                    & x \ar@/^0.5pc/[uu]^{\tilde{a}} \ar@/^0,5pc/@{->}[rruu]_{a}&
                        &                                             \\
                    & \gamma_B(0)   \ar@/^0.5pc/[rr]^{\, \dot{\gamma}_B\, }&
                        &     \gamma_B(1)                      &                                         TB    }\]
In fact, it can be proved that $a$ is $L$-homotopic to the concatenation $h^*(\dot{\gamma}_B)\cdot\tilde{a}$. However, for the sake of simplicity, in this work we shall simply think of the map $a\mapsto (\tilde{a},\dot{\gamma_B})$ as an mere identification.

Recall that for any $A$-path $a$, its \textbf{inverse path} is the $A$-path $a^{-1}$ defined by $a^{-1}(t):=-a(1-t)$.
Using Proposition \ref{splitpath}, one can express the concatenation and inverses of $L$-paths as follows

\begin{prop}\label{prop:split:conc}
Under the isomorphism of Proposition \ref{splitpath}, given two composable $L$-paths $a\simeq (\dot{\gamma}_B,\tilde{a})$ and $b\simeq (\dot{\delta}_B,\tilde{b})$, their concatenation is
\[(\dot{\delta}_B,\tilde{b})\cdot(\dot{\gamma}_B,\tilde{a}):=\bigl(\dot{\delta}_B\cdot\dot{\gamma}_B,\Phi_{\gamma_B}^{-1}(\tilde{b})\cdot \tilde{a}\bigr).\]
Moreover, the inverse path $a^{-1}$ of $a$ is: 
\[(\dot{\gamma}_B,\tilde{a})^{-1}:=\bigl(\dot{\gamma}_B^{-1},\Phi_{\gamma_B}(\tilde{a})^{-1}\bigr).\]
\end{prop}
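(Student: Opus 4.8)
The plan is to verify both identities by a direct computation under the splitting map of Proposition~\ref{splitpath}, treating the base $TB$-component and the vertical $\Ver^*$-component separately, and then invoking Proposition~\ref{prop:holonomies} to convert the Ehresmann holonomy pullbacks that appear into powers of the extension holonomy $\Phi_{\gamma_B}$. Throughout, all paths are considered up to reparameterization, so the usual rescalings involved in concatenation may be ignored. Alternatively, one may quote the abstract formulas for concatenation and inversion of $L$-paths valid for any split Lie algebroid extension from \cite{Br}; the only input specific to our situation is then Proposition~\ref{prop:holonomies}, which expresses the connection holonomy $\Phi^{\gamma_B}_{t,0}$ as the pullback $(\phi^{\gamma_B}_{0,t})^*$.

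For the concatenation, write $c:=b\cdot a$. Since $\dot\gamma_B=\d p\circ\sharp a$ is determined by the anchor alone, the base path of $c$ is the concatenation $\dot\delta_B\cdot\dot\gamma_B$, which gives the first component. For the vertical part $\tilde c_t=c_V(t)\circ\d\phi^{c_B}_{t,0}$, I would split the interval into the two halves carrying $a$ and $b$. On the first half $c_V$ coincides with $a_V$ and $\phi^{c_B}$ restricts to $\phi^{\gamma_B}$, so $\tilde c$ restricts to $\tilde a$. On the second half the cocycle property of the Ehresmann holonomy gives $\phi^{c_B}_{t,0}=\phi^{\delta_B}_{s,0}\circ\phi^{\gamma_B}_{1,0}$ (with $s$ the reparameterized time in the $b$-portion), whence $\tilde c_t=\tilde b_s\circ\d\phi^{\gamma_B}_{1,0}=(\phi^{\gamma_B}_{1,0})^*\,\tilde b_s$. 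By Proposition~\ref{prop:holonomies} one has $(\phi^{\gamma_B}_{1,0})^*=\Phi_{\gamma_B}^{-1}$, so the vertical part of $c$ is $\Phi_{\gamma_B}^{-1}(\tilde b)\cdot\tilde a$, as asserted.

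For the inverse, recall $a^{-1}(t)=-a(1-t)$, whose base is $\dot\gamma_B^{-1}$, giving the first component. For the vertical part, the holonomy along the reversed base path satisfies $\phi^{\gamma_B^{-1}}_{t,0}=\phi^{\gamma_B}_{1-t,1}$; substituting $(a^{-1})_V(t)=-a_V(1-t)$ and $a_V(1-t)=\tilde a_{1-t}\circ\d\phi^{\gamma_B}_{0,1-t}$ into the defining formula $\widetilde{a^{-1}}_t=(a^{-1})_V(t)\circ\d\phi^{\gamma_B^{-1}}_{t,0}$ and using the cocycle identity collapses the two holonomies to $\widetilde{a^{-1}}_t=-\tilde a_{1-t}\circ\d\phi^{\gamma_B}_{0,1}=-(\phi^{\gamma_B}_{0,1})^*\,\tilde a_{1-t}$. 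Proposition~\ref{prop:holonomies} identifies $(\phi^{\gamma_B}_{0,1})^*=\Phi_{\gamma_B}$, and since $\Phi_{\gamma_B}$ is fiberwise linear this equals $-\Phi_{\gamma_B}(\tilde a_{1-t})=(\Phi_{\gamma_B}(\tilde a))^{-1}$.

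The computation is essentially bookkeeping; the main point requiring care is the direction of each holonomy pullback and the way the Ehresmann holonomies compose under concatenation and reversal of base paths --- precisely the place where Proposition~\ref{prop:holonomies} is used to pass between the two notions of holonomy. A secondary subtlety is that the base path $\gamma$ of $a$ in $E$ need not be horizontal, so one must keep in mind that $a_V(t)\in\Ver^*_{\gamma(t)}$ while $\tilde a_t$ lives over a path in the single fiber $E_{\gamma_B(0)}$; the holonomy $\d\phi^{\gamma_B}_{t,0}$ is then evaluated at the corresponding point of that fiber.
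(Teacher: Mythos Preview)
Your proof is correct and follows essentially the same approach as the paper: the paper's one-line argument (``follows directly from \eqref{gammaB} and from the fact that the holonomy commutes with taking concatenation and inverse of $A$-paths'') is precisely what you have unpacked, with the cocycle property of the Ehresmann holonomy and the appeal to Proposition~\ref{prop:holonomies} making explicit the passage between $(\phi^{\gamma_B}_{\cdot,\cdot})^*$ and $\Phi_{\gamma_B}^{\pm 1}$. Your version simply writes out the bookkeeping that the paper leaves to the reader.
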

\begin{proof}
The result follows directly from \eqref{gammaB} and from the fact that the holonomy commutes with taking concatenation and inverse of $A$-paths.
\end{proof}

Notice the analogy between the formula for concatenation in the previous proposition and formula \eqref{eq:semidirectproduct} for the product in the action groupoid. In fact, if one thinks of $P(TB)$ as a groupoid over $B$, then the holonomy gives an action of $P(TB)$ on $P(\Ver^*)$ similarly to the action of $\G(P)$ on $\G_V$ discussed in Section \ref{sub:sec:action:grpd:fibred:grpd}. For this reason, one may think of the fibered product $P(TB)\, {}_\s\!\times_{\t\circ p} P(\Ver^*)$ as a semi-direct product  $P(TB)\ltimes P(\Ver^*)$. 

In general, because of the presence of curvature, there is no action of the fundamental groupoid $\Pi(B)$ on $P(\Ver^*)$. However, holonomy along any path $\gamma_B\in P(B)$ is a Lie algebroid morphism $\Phi_{\gamma_B}:\Ver^*|_{E_{\gamma_B(0)}}\to \Ver^*|_{E_{\gamma_B(1)}}$. Hence, it integrates to a groupoid morphism:
 \[\Phi_{\gamma_B}:\G(\Ver^*)|_{E_{\gamma_B(0)}}\to \G(\Ver^*)|_{E_{\gamma_B(1)}},\]
that we still denote  by $\Phi_{\gamma_B}$. Here, $\G(\Ver^*)$ denotes the Weinstein groupoid of $\Ver^*$. Finally, notice that the formulas in Proposition \ref{prop:split:conc} still make sense when replacing $\Ver^*$-paths by their homotopy classes,  therefore we will denote by $P(TB)\ltimes \G(\Ver^*)$ the fibered product $P(TB)\, {}_\s\!\times_{\t\circ p} \G(\Ver^*)$. 

The identification given by Proposition \ref{splitpath} also allows to describe $L$-homotopies in a nice way:

\begin{thm}\label{thm-2cocycle} Let $L$ be a coupling Dirac structure on $E\to B$. The source 1-connected groupoid $\G(L)$ integrating $L$ naturally identifies with equivalence classes in $P(TB)\ltimes_B \G(\Ver^*)$ under the following relation:
 \begin{itemize}
 \item $(\gamma_0,g_0)\sim(\gamma_1,g_1)$ if and only if $\exists$ a homotopy $\gamma_B:I\times I\to B$, $(t,\epsilon)\mapsto \gamma^\epsilon_B(t)$ between $\gamma_0$ and $\gamma_1$, such that $g_1=\partial (\gamma_B,\t(g_0)).g_0.$
 \end{itemize}
Here, $\partial(\gamma_B, x_0)$ is the element in $\G(\Ver^*)$ represented by the $\Ver^*$-path: 
\begin{equation}\label{partial:path}\epsilon\longmapsto (\d_V)_{\tilde{\gamma}^\epsilon}\Bigl(\,\int_0^1 (\phi^{\gamma_B^\epsilon}_{s,0})^* \omega_H(\gamma_B)_{s,\epsilon}\, \d s\Bigr)\in \Ver^*_{\tilde{\gamma}(\epsilon)} 
\end{equation}
where $\tilde{\gamma}(\epsilon):=\Phi^{-1}_{\gamma_B^\epsilon}\circ \Phi_{\gamma_B^0}(x_0)$ and:
\[\omega_H(\gamma_B)_{s,\epsilon}:=\omega_H\Bigl(\textstyle{h\bigl(\frac{\d\gamma_B}{\d t}(s,\epsilon)\bigr),h\bigl(\frac{\d\gamma_B}{\d\epsilon}(s,\epsilon)\bigr)}\Bigr)\in C^\infty(E_{\gamma_B(s,\epsilon)}).\]
\end{thm}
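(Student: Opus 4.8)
The plan is to transport the abstract characterization of $A$-homotopies via the evolution equation \eqref{evolution} through the splitting isomorphism of Proposition \ref{splitpath}, and to read off exactly what the homotopy condition \eqref{hcondition} becomes once an $L$-path is written as a pair $(\dot\gamma_B,\tilde a)$. Concretely, I would start with a homotopy $a^\epsilon$ of $L$-paths, apply Proposition \ref{splitpath} fiberwise in $\epsilon$ to obtain a pair $(\dot\gamma_B^\epsilon,\tilde a^\epsilon)$, and track how the base homotopy $\gamma_B:I\times I\to B$ and the vertical data $\tilde a^\epsilon$ interact. Because the base component $\dot\gamma_B^\epsilon$ is simply a homotopy of $TB$-paths, the content of the equivalence relation must live entirely in the $\G(\Ver^*)$-factor: two pairs are equivalent precisely when their base paths are homotopic \emph{and} the vertical classes differ by the correction term $\partial(\gamma_B,\t(g_0))$ coming from the curvature.

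\textbf{Key steps.} First I would fix a family $\alpha^\epsilon$ of time-dependent sections of $L$ extending $a^\epsilon$ and decompose it along \eqref{eq:Dirac:components2} as $\alpha^\epsilon = h^*(v^\epsilon) + \alpha_V^\epsilon$, with $v^\epsilon\in\X(B)$ and $\alpha_V^\epsilon\in\Gamma(\Ver^*)$. Next I would feed this decomposition into the evolution equation \eqref{evolution} and use the splitting brackets of Proposition \ref{prop:splitbrackets}, especially the curvature term $\d_V\omega_H(h(v),h(w))$ appearing in \eqref{eq:splitbracket3}, to split \eqref{evolution} into a horizontal equation (an ordinary $TB$-homotopy equation for $X=\sharp h^*(v)$) and a vertical equation for the $\Ver^*$-component of $\beta$. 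The horizontal part forces $\gamma_B$ to be a genuine homotopy in $B$. The vertical equation, after conjugating by the holonomy $\Phi^{\gamma_B}_{t,0}$ (using Proposition \ref{prop:holonomies} to identify the $\graph\omega_H$-holonomy with $(\phi^{\gamma_B}_{0,t})^*$), should integrate out via the variation-of-constants formula \eqref{hintegral} to produce exactly the $\Ver^*$-path \eqref{partial:path}: the integrand $\int_0^1(\phi^{\gamma^\epsilon_B}_{s,0})^*\omega_H(\gamma_B)_{s,\epsilon}\,\d s$ is precisely the accumulated curvature $\d_V\omega_H$ transported back to the starting fiber, and applying $\d_V$ yields the $\Ver^*$-valued defect. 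I would then interpret the homotopy condition \eqref{hcondition} $\beta^1(\epsilon)=0$ as saying that the vertical holonomy class $g_1$ must equal $\partial(\gamma_B,\t(g_0))\cdot g_0$, which is the stated relation. Finally, I would appeal to Proposition \ref{hgeom} (in the $b^0=0$ form noted in the Remark) to legitimize reading $\partial(\gamma_B,x_0)$ as the class representing the discrepancy between the two vertical endpoints, and check that the relation is genuinely an equivalence relation and compatible with the multiplication of Proposition \ref{prop:split:conc}.

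\textbf{Main obstacle.} The hard part will be the careful bookkeeping that converts the abstract evolution equation \eqref{evolution} into the explicit transgression integral \eqref{partial:path}. The difficulty is twofold: one must correctly commute the flow $\psi^{\alpha^\epsilon}_{t,s}$ appearing in \eqref{hintegral} with the geometric holonomy $\phi^{\gamma_B}_{t,0}$ and $\Phi^{\gamma_B}_{t,0}$, keeping straight which transport is the Ehresmann one on $E$ and which is the algebroid one on $\Ver^*$; and one must verify that the curvature term from \eqref{eq:splitbracket3} assembles, under $\d_V$, into $\omega_H(h(\partial_t\gamma_B),h(\partial_\epsilon\gamma_B))$ rather than some cohomologous expression. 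I expect that the cleanest route is to choose the extending sections $\alpha^\epsilon$ adapted to the homotopy $\gamma_B$ so that the horizontal evolution becomes trivial and the vertical equation reduces to a pure integration against the curvature $2$-form, after which \eqref{partial:path} falls out by inspection; the independence of the resulting class from the choice of extension is guaranteed by the general theory (the remark following \eqref{hcondition}). I would also need to confirm that $\partial(\gamma_B,x_0)$ depends only on the homotopy class of $\gamma_B$ rel endpoints when evaluated on spheres, which is exactly the monodromy statement used later for Theorems \ref{thm:2} and \ref{thm:3}.
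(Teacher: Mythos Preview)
Your strategy is sound and is essentially a fleshed-out version of what the paper's own proof consists of: the paper gives no independent argument here but simply states that the result follows immediately from \cite[Sec.\,4]{Br}. What you have sketched---decomposing an $L$-homotopy along the splitting $L=\graph(\omega_H)\oplus\graph(\pi_V)$, feeding the components into the evolution equation \eqref{evolution}, using the bracket formulas of Proposition~\ref{prop:splitbrackets} so that the curvature term $\d_V\omega_H(h(v),h(w))$ from \eqref{eq:splitbracket3} is the sole source of coupling between the horizontal and vertical parts, and then integrating via the variation-of-constants formula \eqref{hintegral} after conjugating by the holonomy---is precisely the content of that reference, specialized to the extension \eqref{eq:Dirac:extension}. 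In particular your identification of the main obstacle (the bookkeeping needed to convert the abstract flow $\psi^{\alpha^\epsilon}_{t,s}$ into the explicit pull-back by $\phi^{\gamma_B^\epsilon}_{s,0}$) is exactly the computation carried out in \cite[Prop.~4.4]{Br}; Proposition~\ref{prop:holonomies} is what makes that translation work here.

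One small caution: you write that you would ``check that the relation is genuinely an equivalence relation and compatible with the multiplication of Proposition~\ref{prop:split:conc}.'' In the cited framework this is not done by hand but is inherited automatically, since the relation is by construction the image of the $L$-homotopy relation under the bijection of Proposition~\ref{splitpath}; attempting a direct verification from formula \eqref{partial:path} alone (e.g.\ transitivity, which requires showing that $\partial$ behaves well under vertical concatenation of base homotopies) is possible but unnecessarily painful. Similarly, the dependence of $\partial(\gamma_B,x_0)$ on the homotopy class of $\gamma_B$ rel endpoints is not part of this theorem and need not be established here; it is addressed separately (and only for spheres) in Theorem~\ref{thm:transgression}.
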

\begin{proof}
The proof follows immediately from the results in \cite[Sec.\,4]{Br}
\end{proof}

One may illustrate the homotopy condition appearing in Theorem \ref{thm-2cocycle} in the following way:\vspace{10pt}
\begin{align*}
&\xymatrix@R=15pt@C=15pt@l{
                                   &&    &&*+[r]{x\quad\quad\quad\quad\quad }&                   \\
																	                                                    \\
                                   &&   & &      \\
																	 &&   & &{x_0}\ar@{<-}@/_0,5pc/[uuu]_{\ g_0}&                                                                       \\
                                   &\ y  \ar@{<-}@/_0,5pc/[rrrr]|>>>>>>>>>>>>{\,h^*(\gamma_1)\,}\quad
																	           \ar@{<-}@/_0,5pc/[rrru]|{\,h^*(\gamma_0)\,}
																						 &&&& x_1  \ar@{<-}@/_0,5pc/[uuuul]_<<<<<<<<<{\ g_1}
																						            \ar@/_0,5pc/@{<..}[ul]^{\partial(\gamma_B, x_0)}       }\\
&\xymatrix{                    \\  & \ \  b_0   \ar@{->}@/_1,5pc/[rrr]|>>>>>>{\,\overset{\ }{\gamma_0}\,}="d" 
                                                \ar@{->}@/^1,5pc/[rrr]|<<<<<<{\,\gamma_1\,}="e"
																							&\ &\ &     \quad {b_1}\quad\quad                  \ar@{=>}^-{\gamma_B}"d";"e"-<-1pt,4pt> }
\end{align*}
\vspace{10pt}

\begin{ex}\label{flat} 
The case where $\pi_V$ is the trivial Poisson structure occurs, for instance, if $L$ is the restriction of a regular Dirac structure to a tubular neighborhood $E\to B$ of one of its leaves $B$. Then $\G(\Ver^*)$ is a bundle of Lie groups that identifies with $\Ver^*$  with its additive structure. Furthermore, it follows from the curvature identity \eqref{eq:curvature} that the connection is flat, so we have a genuine action of the fundamental groupoid $\Pi(B)$ on  $\Ver^*$. Up to a cover of $B$, we may assume that $E$ is trivial as a representation of $\Pi(B)$.  This means that $E$ can be identified with $B\times F$ in such a way that the holonomy along any path is the identity:
\[\phi_{s,0}^{\gamma}=\text{id}_F:\{\gamma(0)\}\times F\longrightarrow\{\gamma(1)\}\times F. \]
It follows that the horizontal and vertical distributions are respectively given by $\Hor=TB\times F$ and $\Ver=B\times TF$ in the decomposition $TE=TB\oplus TF$. Hence $\omega_H$ can be seen as a family of $2$-forms on $B$ parametrized by $F$, and the  leaves of $L$ are of the form $B\times\{x\}$ with presymplectic form $\omega_H|_{B\times\{x\}}$, where $x\in F$.

The homotopy condition appearing in Theorem \ref{thm-2cocycle} can then be expressed as follows: two elements $(\gamma_0,g_0)$ and $(\gamma_1,g_1)$ in $P(TB)\times T_{x_0}^*\!F$ are homotopic if and only if there exists a $TB$-homotopy $\gamma_B:I^2\mapsto B$ between $\gamma_0$ and $\gamma_1$ such that:
\[g_1-g_0=(\d_V)_{x_0}\int_{\gamma_B} \omega_H,\]
where we integrate $\omega_H$ along $\gamma_B$ as a $2$-form with values in $C^\infty(F)$. In order to obtain the above formula, we simply replace $\phi_{0,s}^{\gamma_B^\epsilon}$ by $\text{id}_F$ in \eqref{partial:path} and then we use the fact that, $T^*\!F$ being a bundle of abelian groups, any path in $T_{x_0}^*\!F$ can be represented by a constant paths. This amounts in \eqref{partial:path} to average with respect to the $\epsilon$ variable. In particular, when $F=\mathbb{R}$, we recover the leafwise prequantization Lie algebroids and the homotopy condition appearing in \cite{Cr}. 

Notice also that, applying the resulting $1$-form to a vector $X_{x_0}\in TF$, one gets a geometrical interpretation of $g_1-g_0$ as the variation of the presymplectic area of $\gamma_B$ in the vertical directions:
\[\Bigl\langle\,\d_V\int_{\gamma_B} \omega_H\,, X_{x_0}\,\Bigr\rangle=\left.{\frac{\d}{\d t}}\right|_{t=0}\,\int_{\gamma_B\times\{\phi_t^{X}(x_0)\}} \omega_H,\]
where $\phi_t^X$ is the flow of any vector field $X\in\X(F)$ extending $X_{x_0}$. 
\end{ex}

\begin{rem}
The construction given in Theorem \ref{thm-2cocycle} can be interpreted as an infinite dimensional analogue of the construction given in Section 4 of the groupoid integrating a Yang-Mills phase space. 

For this interpretation, consider the Poisson frame bundle of Section \ref{sub:sec:examples:5}, so that we can view our coupling as an infinite dimensional Yang-Mills phase space. One needs first to reduce the structure group, from the group of Poisson diffeomorphisms between a fixed fiber $E_{b_0}$ and any other fiber, to the subgroup generated by the holonomy transformations  $\Phi_{\gamma_B}$ along any path $\gamma_B\in P(B)$, with $\gamma_B(0)=b_0$. If $P\to B$ denotes the resulting principal bundle, then one can ``identify" the corresponding gauge groupoid $\Gau(P)=P\times_B P$ with the ``groupoid" $P(TB)$. Moreover, the equivalence relation $\sim$ of Theorem \ref{thm-2cocycle} can be viewed as the equivalence relation associated with the corresponding curvature groupoid. 
\end{rem}

\subsection{The Monodromy Groupoid}\label{monodromy:grpd}  
We now use the constructions of \cite{Br,CrFe1} in order to obtain the obstructions to integrability of a coupling Dirac structure $L$.

Consider the short exact sequence of Lie algebroids:
\begin{equation}\label{exact:alg}
\SelectTips{cm}{}\xymatrix@C=20pt{  \Ver^*  \ \ar@{^{(}->}@<-0.25pt>[r]&   L                \ar@{->>}[r]<-0.25pt> &   TB      ,}
\end{equation}
We obtain by integration the sequence of groupoid morphisms: 
\begin{equation}\label{exactgrpd}
\SelectTips{cm}{}\xymatrix@C=20pt{ \G(\Ver^*) \ \ar[r]^-{j}&    \G(L)       \ar[r]^-{q} &   \Pi(B)     ,}
\end{equation}
where $\Pi(B)$ denotes the fondamental groupoid of $B$. Recall $j$ and $q$ are defined at the level of paths:
\begin{align*}
j([\tilde{a}]_{\scriptscriptstyle V})&:=[i \circ a]_{\scriptscriptstyle{L}}, \\
q([a]_{\scriptscriptstyle L})&:=[{p}_*\circ\sharp(a)]_{\scriptscriptstyle{TB}}. 
\end{align*}
for any $\Ver^*$-path $\tilde{a}:I\to \Ver^*$ and any $L$-path $a:I\to L$. Although the sequence \eqref{exact:alg} is exact, the sequence \eqref{exactgrpd}  might not be exact anymore, pointing out a lack of exactness of the integration functor. One can always ensure of the right exactness as follows:
\begin{prop}
Let $L$ be a coupling Dirac structure whose induced Ehresmann connection is complete. Then the sequence \eqref{exactgrpd} is surjective at $\Pi(B)$, and exact at $\G(L)$.
\end{prop}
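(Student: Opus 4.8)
The plan is to prove the two assertions separately: surjectivity of $q$ at $\Pi(B)$, and exactness at $\G(L)$, by which I mean the equality $\im j=\ker q$, where $\ker q:=q^{-1}(\{\mathbf{1}_b:b\in B\})$ is the preimage of the units.

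Surjectivity is the easiest part and is essentially the surjectivity of $\tilde p$ already obtained from completeness, since $q$ coincides with $\tilde p$: for an $L$-path $a$ with base path $\gamma=p_L\circ a$, the $TB$-path $p_*\circ\sharp(a)$ has base path $p\circ\gamma$, so $q([a]_L)$ is the $\Pi(B)$-class of $p\circ\gamma$. Thus, given $[\gamma_B]\in\Pi(B)$ represented by $\gamma_B:I\to B$, I would use completeness of $\Hor$ to lift $\gamma_B$ to a horizontal path in $E$ and form the co-horizontal lift $h^*(\dot\gamma_B)$ of Proposition \ref{prop:Dirac:extension}; this is an $L$-path whose image under $q$ is exactly $[\gamma_B]$.

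For the exactness, the inclusion $\im j\subseteq\ker q$ is immediate and I would clear it first. For a $\Ver^*$-path $\tilde a$ one has $\sharp(i\circ\tilde a)=\pi_V^\sharp(\tilde a)\in\Ver=\ker p_*$, so $p_*\circ\sharp(i\circ\tilde a)=0$ and the base path of $i\circ\tilde a$ stays in a single fiber; hence $q(j([\tilde a]_V))=[0]_{TB}$ is a unit. The substantive inclusion is $\ker q\subseteq\im j$, and here I would argue inside the model of Theorem \ref{thm-2cocycle}, where $\G(L)$ is realized as equivalence classes of pairs $(\gamma_B,g)$ in $P(TB)\ltimes_B\G(\Ver^*)$. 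Under this identification $q[(\gamma_B,g)]=[\gamma_B]\in\Pi(B)$, while $j(g)$ is precisely the class of the pair $(\mathbf{1}_b,g)$ with constant base path. So let $[(\gamma_B,g_0)]\in\ker q$; then $\gamma_B$ is a null-homotopic loop at some $b\in B$, and I pick a homotopy rel endpoints $\gamma^\epsilon_B$ from $\gamma_B$ to the constant loop $\mathbf{1}_b$. Feeding $\gamma^\epsilon_B$ into the equivalence relation of Theorem \ref{thm-2cocycle} yields $(\gamma_B,g_0)\sim(\mathbf{1}_b,g_1)$ with $g_1=\partial(\gamma^\epsilon_B,\t(g_0))\cdot g_0$; since $(\mathbf{1}_b,g_1)=j(g_1)$, this exhibits $[(\gamma_B,g_0)]$ as an element of $\im j$.

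The main obstacle I expect is the final step, namely making precise that everything lands where it should. One must check, using completeness, that all the holonomies $\phi^{\gamma^\epsilon_B}_{s,0}$ and $\Phi_{\gamma^\epsilon_B}$ entering the transgression \eqref{partial:path} are defined along the contracting homotopy, and that $\partial(\gamma^\epsilon_B,\t(g_0))$ indeed has target in the fiber $E_b$ (at $\epsilon=1$ the holonomy $\Phi_{\mathbf{1}_b}$ is the identity, so the target of the transgression lies over $b$), so that $g_1$ is a bona fide class of $\G(\Ver^*)$ over $E_b$ and $(\mathbf{1}_b,g_1)$ is a legitimate element of the fibered product. The independence of the construction from the chosen contracting homotopy is automatic: two such homotopies differ by a homotopy fixing both $\gamma_B$ and $\mathbf{1}_b$, so by Theorem \ref{thm-2cocycle} they determine the same class. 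All remaining verifications reduce to bookkeeping with the concatenation and inverse formulas of Proposition \ref{prop:split:conc} and the path-level definitions of $j$ and $q$.
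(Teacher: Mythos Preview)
Your proposal is correct and follows essentially the same route as the paper: surjectivity via the co-horizontal lift $h^*(\dot\gamma_B)$, the easy inclusion $\im j\subset\ker q$ from $\sharp(\Ver^*)\subset\Ver$, and the reverse inclusion by contracting the base loop and invoking the equivalence relation of Theorem~\ref{thm-2cocycle}. The only cosmetic difference is that the paper first splits the $L$-path via Proposition~\ref{splitpath} into $(\tilde a,\dot\gamma_B)$ with $\tilde a$ a $\Ver^*$-path and then applies Theorem~\ref{thm-2cocycle}, whereas you work directly in the quotient model $P(TB)\ltimes_B\G(\Ver^*)$; the content is identical.
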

\begin{proof}
One may see that $q$ is surjective, provided the connection is complete, by observing that, given $[\dot{\gamma}_B]_{TB}\in \Pi(B)$, the element $[h^*(\dot{\gamma}_B)]\in\G(L)$, defined by \eqref{eq:path:horizontal:lift} maps to $[\dot{\gamma}_B]$.
 
For the exactness at $\G(L)$ one observes that, by the definition, the elements of $\ker\, q$ are represented by $L$-paths whose projection on $TB$ is a contractile loop. Therefore, the inclusion $\im\  j\subset \ker\ q$ is obvious. Conversely, given an element  $[a]_L\in\ker\ q$, represented by some $L$-path $a$, we see that $a\sim(\tilde{a},\dot{\gamma}_B)$, under the identifications of Proposition \ref{splitpath}, where $\gamma_B$ is a contractible loop based at some $b\in B$. Consider a contraction $\gamma^\epsilon_B:I^2\to B$ between  $\gamma_B$ and the trivial path $0_b$. Then by Theorem \ref{thm-2cocycle}, we see that $(\tilde{a},\dot{\gamma}_B)$ is $L$-homotopic to $(\partial(\gamma_B)\cdot\tilde{a}, 0_b)$. Since $(\partial(\gamma_B)\cdot\tilde{a}, 0_b)$ represents a $\Ver^*$-path, we conclude that $[a]_L\in\im \ j$, as claimed. 
\end{proof}

It follows that \eqref{exactgrpd} can only fail to be exact because of the lack of injectivity of $j$. In order to measure this failure, we introduce the following:

\begin{defn}
The {\bf monodromy groupoid} associated with the fibration is the kernel of $j:\G(\Ver^*)\to \G(L)$, denoted by $\mathcal{M}$.
\end{defn}

Obviously, by construction, we have an exact sequence of groupoids:
\[
\SelectTips{cm}{}\xymatrix@C=15pt{ \M  \ \ar@{^{(}->}@<-0.25pt>[r]&  \G(\Ver^*)            \ar@{->>}[r]<-0.25pt> &  \ker\ q     .}
\]
and we can replace \eqref{exactgrpd} by the exact sequence of groupoids:
\[\SelectTips{cm}{}\xymatrix@C=15pt{ \G(\Ver^*)/\M  \ \ar@{^{(}->}@<-0.25pt>[r]&  \G(\Ver^*)            \ar@{->>}[r]<-0.25pt> &  \Pi(B)    .}\]
The kernel of this sequence $\ker q=\G(\Ver^*)/\M$ is a bundle of groupoids with typical fiber the neutral component of the restricted groupoid $\G(L)|_{E_{b_0}}$ to a fiber $E_{b_0}$. In particular, we see that if $\G(L)$ is integrable, then the monodromy groupoid $\M$ must be embedded in $\G(\Ver^*)$.

It remains to relate $\M$ to the global data associated with $L$ on $E$. 

\begin{thm}\label{thm:transgression}
Consider a coupling Dirac structure $L$ on a fibration $E\to B$, and assume that the induced Eheresmann connection is complete. Then there exists a homomorphism 
\[\partial:\pi_2(B)\times_{B} E\to \G(\Ver^*),\]
that makes the following sequence exact:
\[ \dots\to \pi_2(B)\ltimes E\to \G(\Ver^*)\to \G(L)\to\Pi(B).\]
\end{thm}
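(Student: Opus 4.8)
The plan is to work entirely inside the identification of $\G(L)$ furnished by Theorem \ref{thm-2cocycle}. The two rightmost exactness statements of the sequence — surjectivity of $q$ at $\Pi(B)$ and $\im j=\ker q$ at $\G(L)$ — have already been proved in the Proposition immediately preceding the statement. Consequently the entire content of the theorem is concentrated at the term $\G(\Ver^*)$: I must construct the transgression $\partial$ on $\pi_2(B)\times_B E$ and show that its image is exactly the monodromy groupoid $\M=\ker j$.

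First I would define $\partial$. Given $x_0\in E_b$ and a class $\sigma\in\pi_2(B,b)$ represented by $\gamma_B:I^2\to B$ with $\gamma_B(\partial I^2)=\{b\}$, I set $\partial(\sigma,x_0):=\partial(\gamma_B,x_0)$, the element of $\G(\Ver^*)$ represented by the $\Ver^*$-path \eqref{partial:path}. For a sphere both boundary paths are constant at $b$, so $\Phi_{\gamma_B^0}=\id$ and the base point $\tilde\gamma(\epsilon)=\Phi^{-1}_{\gamma_B^\epsilon}(x_0)$ satisfies $\tilde\gamma(0)=\tilde\gamma(1)=x_0$; hence $\partial(\gamma_B,x_0)$ lies in the isotropy group $\G(\Ver^*)_{x_0}$. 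Conceptually, $\partial(\gamma_B,x_0)$ integrates over the sphere the $\Ver^*$-valued curvature $2$-form $\d_V\omega_H$ of the connection $\graph(\omega_H)$ on the extension \eqref{exact:alg} (the curvature computed just after Proposition \ref{prop:splitbrackets}); this is precisely the transgression/monodromy construction of \cite{Br,CrFe1}.

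The delicate point — which I expect to be the main obstacle — is that $\partial(\gamma_B,x_0)$ depends only on the homotopy class $\sigma$. This cannot be extracted formally from transitivity of the relation $\sim$ in Theorem \ref{thm-2cocycle}, since that would only give equality in $\G(L)$, not in $\G(\Ver^*)$; it genuinely requires a $\Ver^*$-homotopy argument. The approach is to take a $3$-parameter family $\gamma_B^{\epsilon,r}$ of spheres interpolating between two homotopic representatives, differentiate the integrand of \eqref{partial:path} in the parameter $r$, and exhibit the resulting family of $\Ver^*$-paths as a $\Ver^*$-homotopy by checking the homotopy condition \eqref{hcondition}. This is a Bianchi-type identity for the curvature $\d_V\omega_H$, forced by the structure equations of Proposition \ref{prop:obstr:Dirac} — the $\Gamma$-closedness $\d_\Gamma\omega_H=0$, the invariance $\Lie_{h(v)}\pi_V=0$, and the curvature identity \eqref{eq:curvature}. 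I would either carry this out directly or, following the paper's practice for Theorem \ref{thm-2cocycle}, invoke the transgression machinery of \cite[Sec.\,4]{Br}. The homomorphism property is then easier: $\pi_2(B,b)$ is abelian with product given by stacking spheres, and the integral in \eqref{partial:path} is additive under stacking, so $\partial(\,\cdot\,,x_0):\pi_2(B,b)\to\G(\Ver^*)_{x_0}$ is a group homomorphism and, globally, $\partial$ is a morphism of bundles of groups over $E$.

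Finally I would establish exactness at $\G(\Ver^*)$, i.e. $\im\partial=\ker j=\M$. Under the identification of Theorem \ref{thm-2cocycle}, $j(g)=[(0_b,g)]$ where $0_b$ is the constant $TB$-path, and the unit of $\G(L)$ over $x\in E_b$ is $[(0_b,\mathbf{1}_x)]$. Hence $g\in\ker j$ iff $(0_b,\mathbf{1}_x)\sim(0_b,g)$, which by the relation in Theorem \ref{thm-2cocycle} holds iff there is a homotopy between the two constant paths at $b$ — that is, a sphere $\gamma_B$ at $b$ — with $g=\partial(\gamma_B,x)\cdot\mathbf{1}_x=\partial(\gamma_B,x)$. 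Since every such sphere represents some $\sigma\in\pi_2(B,b)$ and $\partial$ depends only on $\sigma$, this says exactly $\ker j=\im\partial$. Combined with the already-established exactness at $\G(L)$ and at $\Pi(B)$, the whole sequence is exact. The only remaining bookkeeping is smoothness of $\partial$ and the compatibility of its source and target with the bundle-of-groups structure $\pi_2(B)\ltimes E$, both of which follow from the smooth dependence of the holonomies $\phi^{\gamma_B}_{t,0}$ and $\Phi_{\gamma_B}$ on $\gamma_B$ guaranteed by the completeness assumption.
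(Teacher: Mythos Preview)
Your proposal is correct and follows essentially the same route as the paper: define $\partial$ on spheres via the formula \eqref{partial:path} from Theorem \ref{thm-2cocycle}, invoke \cite{Br} for the homotopy-invariance, and then identify $\im\partial$ with $\M=\ker j$. The paper's own proof is extremely terse---it simply cites \cite{Br,BZ} for both the well-definedness and the transgression identification---so your explicit argument for exactness at $\G(\Ver^*)$ via the equivalence relation of Theorem \ref{thm-2cocycle} is a welcome elaboration rather than a departure.
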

In other words, Theorem  \ref{thm:transgression} states that the monodromy groupoids of the fibration coincide with the image of the transgression map $\M=\im \partial|_{\pi_2(B)}$.
\begin{proof}
The map $\partial$ in Theorem \ref{thm-2cocycle}, when restricted to a sphere in $B$ based at some $b\in B$ (seen as a map $\gamma_B:I^2\to B$ such that $\gamma_B(\partial I^2)=\{b\}$) is independent of its homotopy class (see \cite{Br}). Then it follows from \cite{Br,BZ} that the restriction of the map $\partial$ to $\pi_2(B)$ corresponds precisely to the transgression map.
\end{proof}

Note the analogy between the monodromy groupoid described above and the monodromy groups that measure the integrability of an algebroid \cite{CrFe2}. In fact, when $E$ is a tubular neighborhood of a leaf $B\subset E$ in a Dirac structure, the restriction $\M|_B$ coincides, by construction, with the usual monodromy groups along $B$.

Finally, we can relate  the monodromy groupoid of a coupling Dirac structure with the problem of integrability as follows:

\begin{thm} \label{th:integrability}
Let $L$ be a coupling Dirac structure on $E\to B$ and assume that the associated connection $\Gamma$ is complete. Then $L$ is an integrable Lie algebroid \emph{iff} the following conditions hold:
\begin{enumerate}[(i)]
\item the typical Poisson fiber $(E_x,\pi_V|_{E_x})$ is integrable,
\item the injection $\mathcal{M}\hookrightarrow \G(\Ver^*)$ is an embedding.
\end{enumerate}
\end{thm}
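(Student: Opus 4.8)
The proof rests on the general principle that $L$ is integrable precisely when its Weinstein groupoid $\G(L)$ is smooth, combined with the exact sequence of groupoids
\[\M\longrightarrow \G(\Ver^*)\xrightarrow{\ j\ }\G(L)\xrightarrow{\ q\ }\Pi(B)\]
furnished by Theorem \ref{thm:transgression}, where $q=\tilde p$ integrates $p_*\circ\sharp$ and $\M=\im\partial|_{\pi_2(B)}$. The plan is to read the two conditions off from smoothness of $\G(L)$ for the forward implication, and to reconstruct $\G(L)$ as a smooth extension of $\Pi(B)$ for the converse. Completeness of $\Gamma$ is used throughout to guarantee that $q$ is a submersion, in fact a fibration, as established before Theorem \ref{thm-2cocycle}.

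For necessity, assume $\G(L)$ is smooth. To obtain (i) I would restrict to a single fiber: by Remark \ref{rem:integr:fibers} the subset $q^{-1}(\mathbf{1}_b)=\tilde p^{-1}(\mathbf{1}_b)$ is a symplectic groupoid over $E_b$ integrating $\pi_V|_{E_b}$, and since it is the preimage of a point under the submersion $q$ it is an embedded smooth Lie groupoid; hence the typical Poisson fiber is integrable. This in turn makes $\G(\Ver^*)$ smooth, by the integration theorem for Poisson fibrations. For (ii), the kernel $\ker q=q^{-1}(B)$ is an embedded submanifold of $\G(L)$, so it is smooth; but $\ker q=\G(\Ver^*)/\M$, and a quotient of the smooth manifold $\G(\Ver^*)$ by the normal subgroupoid $\M$ can be smooth only when $\M$ is closed and embedded. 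This is exactly the assertion that $\M\hookrightarrow\G(\Ver^*)$ is an embedding, as already observed in the discussion preceding Theorem \ref{thm:transgression}.

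For sufficiency, assume (i) and (ii). By (i) the fibered symplectic groupoid $\G(\Ver^*)$ is smooth, and by (ii) the monodromy groupoid $\M$ is a closed, embedded, normal bundle of groups, so the quotient $\G(\Ver^*)/\M$ is again a smooth Lie groupoid. It remains to upgrade the topological groupoid $\G(L)$ to a smooth one. Using the identification $\G(L)=(P(TB)\ltimes_B\G(\Ver^*))/\!\sim$ of Theorem \ref{thm-2cocycle}, the relation $\sim$ collapses the base-path factor to its homotopy class and divides the fiber factor by the transgression $\partial$, so that the fibers of $q:\G(L)\to\Pi(B)$ are modeled on $\G(\Ver^*)/\M$. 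Since $\Pi(B)$ is a smooth manifold and, by completeness, $q$ is a locally trivial fibration, one realizes $\G(L)$ as a smooth bundle over $\Pi(B)$ with smooth fiber $\G(\Ver^*)/\M$; the structure maps are smooth because the holonomy $\Phi_{\gamma_B}$ and the transgression $\partial$ depend smoothly on $\gamma_B$. Thus $\G(L)$ is smooth and $L$ is integrable.

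The delicate step is the converse: showing that the set-theoretic quotient $\G(L)$ genuinely carries a manifold structure with smooth structure maps, rather than being merely a topological groupoid. The crux is to produce smooth local trivializations of $q$ over $\Pi(B)$ that are compatible with the fiberwise quotient by $\M$, i.e.\ to control the smooth dependence on $\gamma_B$ of the cocycle assembled from $\Phi_{\gamma_B}$ and $\partial$ in Theorem \ref{thm-2cocycle}; completeness is precisely what makes $q$ a fibration and thereby supplies these trivializations. By comparison the forward direction is soft, reducing to standard facts about preimages under submersions and about quotients of Lie groupoids by embedded normal subgroupoids.
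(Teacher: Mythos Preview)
Your forward implication matches the paper's: both use that $q$ is a submersion to get smoothness of $\ker q$ (hence integrability of the fibers) and then infer that $\M$ is embedded from smoothness of $\G(\Ver^*)/\M=\ker q$.

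For the converse, however, you take a genuinely different route from the paper, and your argument has a real gap. You try to build a smooth structure on $\G(L)$ directly, by exhibiting it as a locally trivial bundle over $\Pi(B)$ with smooth fiber $\G(\Ver^*)/\M$. You yourself flag the delicate step: producing smooth local trivializations of $q$ compatible with the fiberwise quotient. But you never actually produce them; you only assert that holonomy and $\partial$ depend smoothly on $\gamma_B$. Even granting that, one still needs to check that the equivalence relation of Theorem~\ref{thm-2cocycle} has smooth, constant-rank graph so that the quotient is a manifold with smooth structure maps. This is exactly the kind of hard quotient construction that the Crainic--Fernandes machinery was designed to bypass.

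The paper's converse avoids this entirely by invoking the integrability criterion of \cite{CrFe2}: $L$ is integrable iff its monodromy groups $\mathcal{N}(L)\subset\ker\sharp$ are uniformly discrete. The argument is short: any $\xi\in\mathcal{N}(L)_x$ lies in $\ker\sharp_x\subset\Ver^*_x$, and since $[\xi]_L=\mathbf{1}_x$ one has $[\xi]_V\in\ker j=\M$. Thus a sequence $\xi_n\in\mathcal{N}(L)$ converging to $0_x$ gives $[\xi_n]_V\in\M$ converging to $\mathbf{1}_x$; since (i) makes $\G(\Ver^*)$ smooth and (ii) makes $\M$ embedded (hence discrete near the identity section), eventually $[\xi_n]_V=\mathbf{1}_{x_n}$, i.e.\ $\xi_n\in\mathcal{N}(\Ver^*)$, which is uniformly discrete by (i). Hence $\mathcal{N}(L)$ is uniformly discrete and $L$ is integrable. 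This is both shorter and rigorous; I would recommend replacing your direct construction by this monodromy argument.
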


\begin{proof}
First it is easily seen that since the associated Poisson fibration is locally trivial, $\Ver^*$ is integrable \emph{iff} the typical Poisson fiber $(E_x,\pi_V|_{E_x})$ is integrable.

Assume now that $L$ is integrable. Then the projection $q: \G(E)\to \Pi(B)$ is a smooth surjective submersion, therefore $\ker q$ is a Lie groupoid integrating $\Ver^*$, in particular the typical Poisson fiber is integrable. Furthermore, since $\ker q= \G(\Ver^*)/\mathcal{M}$ is smooth, $\mathcal{M}$ is necessarily embedded in $\G(\Ver^*)$.

Conversely, suppose that $\mathcal{M}$ is embedded in $\G(\Ver^*)$ and consider a sequence $(\xi_n)\subset \mathcal{N}(L)$ of monodromy elements of $L$ converging to a trivial path $0_x$. Since $\ker\sharp\subset \Ver^*$, one can consider the sequence $[\xi_n]_{V}\in \G(\Ver^*)$, where $\xi_n$ is considered as a constant path. By the definition of the monodromy groups
$\mathcal{N}(L)$ controlling the integrability of $L$ (see \cite{CrFe1}), $[\xi_n]_L\in \G(L)$ is a sequence of units $[\xi_n]_L={\bf 1}_{x_n}$, therefore $[\xi_n]_{V}\in \M$. In other words, there exists a neighborhood $U$ of the identity section in $\G(L)$ such that $\NN(L)\cap U\subset \M\cap U$. Since $\mathcal{M}$ is embedded in $\G(\Ver^*)$, it follows that there exists a neighborhood $V\subset U$ of the identity section in $\G(L)$ such that $\NN(L)\cap V$ coincides with the identity section. This shows that the obstructions to integrability of $L$ vanish.
\end{proof}

\begin{ex}[Hamiltonian symplectic fibrations]
Assume $L$ that is the graph of a presymplectic form. Then $L$ identifies with $TE$ as a Lie algebroid (using the anchor map). In particular $L$ is integrable and $\G(L)$ identifies with the fundamental groupoid of $E$. Let us see how one can recover this using the previous construction.
 
In that case, $\pi_V$ is the inverse of a symplectic (vertical) form, therefore $\Ver^*$ identifies with $\Ver$ as a Lie algebroid, and $\G(\Ver^*)\simeq\G(\Ver)$, which is just a fibered version of the fundamental groupoid. Therefore the transgression map becomes $\partial:\pi_2(B)\ltimes E\to \G(\Ver)$, and as easily checked, corresponds to the usual transgression map in the homotopy long exact sequence associated to the fibration $E\to B$. It follows that $\M_x$ lies in the fundamental group $\pi_1(E_{p(x)})$ of the fiber through $x\in E$, and $\M$ is locally trivial over $E$. Now Theorem \ref{th:integrability} shows that $L$ is integrable.
  
In fact, if the fibers are compact, one can even show that the transgression map vanishes. Indeed, given a sphere in $B$, it follows from \eqref{partial:path} that the loops representing the image of the transgression map are the so-called hamiltonian loops (see \cite{MaSa}). For compact symplectic manifolds, it is a well known fact that such hamiltonian loops are always contractile.
\end{ex}

\begin{ex}[Split Poisson Structures]\label{ex:split1}
 When a coupling Dirac structure $L$ is the graph of a Poisson structure $\pi$, the decomposition \eqref{eq:Dirac:components} corresponds to a splitting $\pi=\pi_V+\pi_H$, where $\pi_H$ is a bivector field (see \cite{Vor}).
 
One may check that the corresponding connection has vanishing curvature if and only if $\pi_H$ is Poisson. The characteristic foliation of $\pi_H$ is then given by the integrable distribution $\Hor$. Moreover, it follows from the curvature identity \eqref{eq:curvature} that the connection if flat if and only if $\omega_H$ takes values in the space of Casimirs of $\pi_V$.

Let us assume that $\pi_H$ is indeed Poisson and, for the sake of simplicity, assume that $E=B\times F$ is a trivial fibration. Then one can still interpret the elements of $\M$ in terms of variations of the symplectic area of spheres. First notice that $\tilde{\gamma}^\epsilon$ is necessarily a trivial path since the connection is trivial. Furthermore, the integral in \eqref{partial:path} involves $\omega_H(\frac{\d\gamma_B}{\d t},\frac{\d\gamma_B}{\d\epsilon})$, which are Casimirs of the vertical Poisson structure on $F$. The resulting element in $\Ver^*_{x_0}$ lies in the center of the isotropy algebra at $x_0$. Thus taking the corresponding $\Ver^*$-homotopy class amounts to integrate along the $\epsilon$ variable.
\end{ex}

\begin{ex}
As a particular case of Example \ref{ex:split1} consider the trivial Poisson fibration $E=\Ss^2\times \mathfrak{so}_3^*\to \Ss^2$, where $p$ is the projection on the first factor. Let $\pi_V$ be the linear Poisson structure on the fibers $\mathfrak{so}_3^*$ of the projection and let $\Hor_{(b,x)}=T_b S^2\times\{x\}$ be the trivial connection. Then  $\omega_H$ must necessarily be of the form 
$\omega_H=f \cdot\omega$, where $\omega$ denotes the standard symplectic form on $S^2$ and $f$ is a Casimir of $\mathfrak{so}_3^*$,  i.e., a smooth function of the radius $r\in C^\infty(\mathfrak{so}_3^*)$.

One knows (see, e.g., \cite{CrFe2}) that the (usual) monodromy groups of the vertical Poisson structure at some $(b,x)\in B\times\mathfrak{so}_3^*$ are given by:
 \[\mathcal{N}(\Ver^*)_{(b,x)}= 
\begin{cases} \ 4\pi  \mathbb{Z}.\d r&\mbox{if } x\neq 0,\\ 
                         \  \{0\}   &\mbox{if } x=0,
 \end{cases}\]	
Applying the integrability criterium of Theorem \ref{th:integrability}, we see that $L$ is integrable if and only if $4\pi f'(r)$ is a rational multiple of $4\pi$, for any $r$. By a smoothness, this means that
$f'$ must be constant, thus $f(r)=\alpha r + \beta$, where $\alpha\in \mathbb{Q}$ and $\beta\in \mathbb{R}$.

One can also recover this result using the prequantization Lie algebroids of \cite{Cr} associated with a product of presymplectic spheres: on each leaf, the restricted Lie algebroid 
$L|_{S^2\times S^2\times\{v\}}$ is the prequantization of a product $(S^2\times S^2, f'(v)\omega\times \omega)$ of a presymplectic spheres. It is well known that leaf wise, 
$f'(v)$ must be a rational multiple of $\int_{S^2}\omega=4\pi$.

This example shows how rigid the integrability conditions can be: in this example, the value and the derivative of $f$ at a point entirely determines the structure.
\end{ex}




\appendix

\numberwithin{equation}{section}

\section{Actions on Lie groupoids and Lie algebroids} 
\label{sub:sec:actions}

We will have to look at various actions of Lie groups and algebras on Lie groupoids and Lie algebroids. The following diagram summarizes the various possibilities:
\[
\xymatrix{
\txt<8pc>{Lie group action on a groupoid: $\Ac:G\to\Aut(\G)$}
\ar@2{->}[r]\ar@2{->}[d]
& \txt<8pc>{Lie group action on a algebroid $\ac:G\to\Aut(A)$}
\ar@2{->}[d]\\
\txt<8pc>{Lie algebra action on a groupoid $\Aci:\gg\to\X_\text{mult}(\G)$}
\ar@2{->}[r]
&\txt<8pc>{Lie algebra action on a algebroid $\aci_*:\gg\to\Der(A)$}
}
\]
where the four corners have the following precise meaning:
\begin{itemize}
\item \textbf{Action of a Lie group $G$ on a Lie groupoid $\G$:} This means a smooth action $\Ac:G\times \G\to\G$ such that for each $g\in G$ the map $\Ac_g:\G\to\G$, $x\mapsto gx$, is
a Lie groupoid automorphism. 
\item \textbf{Action of a Lie group $G$ on a Lie algebroid $A$:} This means a smooth action $\ac:G\times A\to A$ such that for each $g\in G$ the map $\ac_g:A\to A$, $a\mapsto ax$, is
a Lie algebroid automorphism.
\item \textbf{Action of a Lie algebra $\gg$ on a Lie groupoid $\G$:} This means a Lie algebra homomorphism $\Aci:\gg\to\X_\text{mult}(\G)$, where $\X_\text{mult}(\G)\subset\X(\G)$ denotes the multiplicative vector fields in $\G$.
\item \textbf{Action of a Lie algebra $\gg$ on a Lie algebroid $A$:} This means a Lie algebra homomorphism $\aci_*:\gg\to\Der(A)$, where $\Der(A)$ is the space of derivations of the Lie algebroid $A$.
\end{itemize}
Obviously, Lie group actions on Lie groupoids and algebroids cover ordinary Lie group actions on the base manifold. Similarly, Lie algebra actions on Lie groupoids and algebroids cover ordinary Lie algebra actions on the base manifold.

\begin{rem}
We will be using the following convention: for a vector bundle $V\to M$, $\gl(V)$ will denote the space of derivations of $V$ i.e., linear maps $D:\Gamma(V)\to\Gamma(V)$ such that there exists a vector field $X_D$ (called the symbol of the derivation) satisfying:
\[ D(fs)=fDs+X_D(f)s,\quad (f\in C^\infty(M),\ s\in\Gamma(V)). \]
Derivations are the same thing as fiberwise linear vector fields on $E$. So a derivation $D$ has a flow which is a one parameter group of vector bundle automorphisms of $V$, $\varphi_t^D\in\GL(V)$.

On the other hand, for a Lie algebroid $A$, we denote by $\Der(A)$ the space of Lie algebroid derivations of $A$, i.e. those derivations $D\in\gl(A)$ satisfying:
\begin{align*} D([\al,\be])=&[D\al,\be]+[\al,D\be], \\
               \sharp(D\alpha)=&[X_D,\sharp\alpha].
\end{align*}
The corresponding flow is a one paremeter group of Lie algebroid automorphisms of $A$: $\varphi_t^D\in\Aut(A)$.
\end{rem}

The arrows in the diagram above represent natural constructions:
\begin{itemize}
\item From a Lie group action on a groupoid $\Ac:G\to\Aut(\G)$ we can build a Lie group action
$\ac:G\to\Aut(A)$, where $A=A(\G)$: since each $\Ac_g:\G\to\G$ is a Lie groupoid automorphism
it induces a Lie algebroid automorphism $\ac_g:A\to A$:
\[\ac_g:=\d\Ac_g|_A, \]  here we identified $A_m$ with the tangent space to the $\s-$fibers at the identity $1_m$). This defines the action at the level of the Lie algebroid. The corresponding action on sections of $A$ will be denoted  $\ac_*$, that is \[(\ac_g)_*(\beta)_m:=\ac\circ\beta(g^{-1}m),\  (\beta\in \Gamma(A)).\]
\item From a Lie group action on a algebroid $\ac_*:G\to\Aut(A)$, we can build a Lie algebra action $\aci_*:\gg\to\Der(A)$ by differentiation:
\[ (\aci_\xi)_*(\beta):=\left.\frac{\d}{\d t}(\ac_{\exp(-t\xi)})_*(\beta)\right|_{t=0}\quad (\xi\in\gg, \beta\in\Gamma(A)). \]
The fact that the action is by Lie algebroid automorphisms, implies that $(\aci_\xi)_*$ is a Lie algebroid derivation.
 Moreover, the corresponding vector field on $A$ is given by:
\[ (\aci_\xi)_a:=\left.\frac{\d}{\d t}\ac_{\exp(-t\xi)}(a) \right|_{t=0}\quad (\xi\in\gg, a\in A).\]
\item From a Lie group action on a groupoid $\Ac:G\to\Aut(\G)$ we can also build a Lie algebra action $\Aci:\gg\to\X_\text{mult}(\G)$ by differentiation:
\[ \Aci(\xi)_x:=\left.\frac{\d}{\d t}\Ac_{\exp(-t\xi)}(x)\right|_{t=0}\quad (\xi\in\gg, x\in\G). \]
The fact that the action is by Lie groupoid automorphisms, implies that $\Aci(\xi)$ is a multiplicative vector field in $\G$.

\item Finally, from a Lie algebra action on a groupoid $\Aci:\gg\to\X_\text{mult}(\G)$ we obtain
a Lie algebra action $\aci_*:\gg\to\Der(A)$, where $A=A(\G)$: if we identify sections of $A$
with right invariant vector fiels in $\G$, then we can define $\aci$ by:
\[ (\aci_\xi)_*:=[\Aci(\xi),-]_A\quad (\xi\in\gg).\]
This is well-defined since the Lie bracket between a multiplicative and a right invariant vector field is a right invariant vector field.
\end{itemize}
With these precisions, the commutativity of the diagram becomes obvious.
\begin{rem}
A multiplicative vector field $X\in\X(\G)$ can be thought of
as a Lie groupoid homomorphism $\G\to T\G$. Its base map is just an ordinary vector field
$M\to TM$. In the last construction, for each $\xi\in\gg$, the symbol of the derivation $(\aci_\xi)_*$ is precisely the base map of the multiplicative vector field $\Aci_\xi$.
\end{rem}

Under apropriate assumptions one can also invert the arrows in the diagram above, namely:
\begin{itemize}
\item One can invert the horizontal arrows (integrate actions on Lie algebroids
to actions on Lie groupoids) if $\G=\G(A)$, the source 1-connected Lie groupoid 
integrating $A$.
\item One can invert the vertical arrows (integrate Lie algebra actions 
to Lie group actions) if $G=G(\gg)$, the source 1-connected Lie group 
integrating $\gg$, and if the infinitesimal actions are complete (the flows are 
defined for all $t\in\Rr$).
\end{itemize}
The reader should be able to fill in the details.

\subsection{Inner actions} 
\label{sub:sec:inner:actions}

For Lie group actions as introduced in the previous paragraph, there is a notion of \emph{inner actions} as we explain below. First recall the notion of bisection for a Lie groupoid $\G\tto M$. 
\begin{defn}
A bisection $b:M\to \gg$ is a smooth section of the source map such that $\t\circ b$ is a diffeomorphism of $M$. 
\end{defn}
The space $\text{Bis}(\G) $ of bisections has natural structure of a group, where the unit, mutliplication, inverses are given as follows: $1(x)={\bf 1}_x$, 
 $(b.b')(x)=b\circ\t\circ b'(x)\cdot b'(x)$, and
 $b^{-1}(x)={\bf i}\circ b\circ (\t\circ b)^{-1}(x)$. Here $b,b'\in \text{Bis}(\G)$, $x\in M$, ${\bf 1}: M\to \G$ denotes the unit map of $\G$, and ${\bf i}:\G\to \G$ the inverse map. Clearly, $\text{Bis}(\G)\to \text{Diff}(M),\ b\mapsto \t\circ b$ is a morphism of groups.

The notion of inner action for Lie groupoids follows immediately from the following definitions:
\begin{itemize}
\item An \textbf{inner Lie groupoid automorphism} is Lie groupoid automorphism $\Phi:\G\to\G$ of the form:
\[ \Phi(x)=b(\t(x))\cdot x\cdot b(\s(x))^{-1}. \]
for some bisection $b:M\to\G$. They clearly form a subgroup $\InnAut(\G)\subset\Aut(\G)$.
\item A \textbf{inner Lie algebroid automorphism} is a Lie algebroid automorphism $\phi:A\to A$ of the form:
\[ \phi=\varphi_{1,0}^{D_\al},\]
for some time dependent section $\al_t\in\Gamma(A)$. Here $t\mapsto \varphi_{t,0}^{D_\al}$ denote the flow of the time dependent derivation $D_{\al_t}:=[\al_t,-]$).
\comment{oli: I made $\alpha$  time-dependent here} They generate a subgroup $\InnAut(A)\subset\Aut(A)$.
\item A \textbf{mutiplicative exact vector field} is a multiplicative 
vector field $X\in\X_\text{mult}(\G)$ of the form:
\[ X=\ri{\al}-\le{\al},\]
where $\al$ is a section of $A=A(\G)$, and $\ri{\al}$ and $\le{\al}$ are the right and left invariant vector fields in $\G$ determined by $\al$. They form a Lie subalgebra $\X_\text{exact}(\G)\subset \X_\text{mult}(\G)$.
\item An \textbf{inner derivation} is a Lie algebroid derivation $D\in\Der(A)$ of the form:
\[ D=[\al,-]_A,\]
for some section $\al\in\Gamma(A)$. They clearly form a Lie subalgebra $\InnDer(A)\subset\Der(A)$.
\end{itemize}
Now, one can define inner actions in a more or less obvious fashion. We obtain a diagram as above:
\[
\xymatrix{
\txt<8pc>{inner Lie group action on a groupoid: $\Ac:G\to\InnAut(\G)$}
\ar@2{->}[r]\ar@2{->}[d]
& \txt<8pc>{inner Lie group action on a algebroid $\ac:G\to\InnAut(A)$}
\ar@2{->}[d]\\
\txt<8pc>{inner Lie algebra action on a groupoid $\Aci:\gg\to\X_\text{exact}(\G)$}
\ar@2{->}[r]
&\txt<8pc>{inner Lie algebra action on a algebroid $\aci_*:\gg\to\InnDer(A)$}
}
\]
In this work, we will mainly consider inner actions associated with a Lie groupoid morphism $\Psi:G\times M\to\G$ by the formula:
\begin{equation} 
\label{eq:inner:group:action}
\Ac_g(x)=\Psi(g,\t(x))\cdot x\cdot \Psi(g,\s(x))^{-1}\quad (g\in G,x\in \G).
\end{equation}
Notice that the map $\Psi$ covers the ordinary action $G\times M\to M$ on the base. Furthermore, one may check that $\Psi:G\times M\to\G$ is a Lie groupoid morphism \emph{iff} the map $G\to \text{Bis}(\G)$,\ $g\mapsto b^g(x):=\Psi(g, x)$ is a group morphism covering the usual Lie group action of $G$ on $M$.

Similarly, the inner Lie algebra actions on a Lie algebroid $\aci_*:\gg\to\InnDer(A)$ will come associated with a Lie algebroid morphism $\psi:\gg\times M\to A$ (covering the identity on $M$) such that:
\begin{equation} 
\label{eq:inner:algebra:action}
(\aci_\xi)_*=[\psi_*(\xi),-]_A,\quad (\xi\in\gg)
\end{equation}
where $\psi_*(\xi)\in\Gamma(A)$ is defined by $\psi_*(\xi)_m=\psi(\xi,m)$, for any $m\in M$.
 The map $\psi_*:\gg\to \Gamma(A)$ covers the ordinary
Lie algebra action $\gg\to\X(M)$ on the base. Moreover, $\psi_*$ is a Lie algebra morphism covering the infinitesimal action $\gg\to\X(M)$ \emph{iff} $\psi$ is a Lie algebroid.

\begin{rem}
One should note, however, that maps $\Psi$ and $\psi$ as above do not 
need to define morphisms from the action groupoid and algebroid into $\G$ and $A$ in order for \eqref{eq:inner:group:action} and \eqref{eq:inner:algebra:action} to induce inner actions. In this paper though, we will always assume that it is the case.
\end{rem}

One may now easily check that:

\begin{prop}
Let $G\times M\to M$ be an action of a Lie group on a manifold. Then any
homorphism $\Psi:G\ltimes M\to\G$ from the action Lie groupoid to a Lie groupoid $\G$ 
determines by the formula \eqref{eq:inner:group:action} a inner action of $G$ on $\G$ that
covers the action on $M$.

Similarly, let $\g\to\X(M)$ be an action of a Lie algebra on a manifold. Then any
homorphism $\psi_*:\gg\ltimes M\to A$ from the action Lie algebroid to a Lie algebroid $A$ 
determines by the formula \eqref{eq:inner:algebra:action} an inner action of $\gg$ on $A$ that
covers the infinitesimal action on $M$.
\end{prop}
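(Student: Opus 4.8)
The plan is to handle the two statements separately, in each case unwinding the relevant definition (inner automorphism, resp.\ inner derivation) so that everything reduces to one substantive point: the homomorphism property. For the group statement, the first thing I would record is that, since $\Psi:G\ltimes M\to\G$ covers the action, the map $b^g:=\Psi(g,\cdot)$ satisfies $\s\circ b^g=\id_M$ and $\t\circ b^g$ equals the diffeomorphism $m\mapsto g\cdot m$; hence each $b^g$ is a bisection of $\G$. With this, formula \eqref{eq:inner:group:action} reads $\Ac_g(x)=b^g(\t(x))\cdot x\cdot b^g(\s(x))^{-1}$, which is by definition an inner automorphism, so $\Ac_g\in\InnAut(\G)$. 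The composability of the three arrows and the multiplicativity $\Ac_g(x\cdot y)=\Ac_g(x)\cdot\Ac_g(y)$ then follow formally from $\s\circ b^g=\id_M$: when $\s(x)=\t(y)$ the middle factor $b^g(\s(x))^{-1}\cdot b^g(\t(y))$ collapses to a unit, and $\t(x\cdot y)=\t(x)$, $\s(x\cdot y)=\s(y)$ give the claim.

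The only nontrivial point is the homomorphism property $\Ac_{g'g}=\Ac_{g'}\circ\Ac_g$, and this is precisely where the hypothesis that $\Psi$ is a groupoid morphism enters. Expanding $\Ac_{g'}(\Ac_g(x))$ and using $\t(\Ac_g(x))=g\cdot\t(x)$, $\s(\Ac_g(x))=g\cdot\s(x)$, one meets the products $\Psi(g',g\cdot\t(x))\cdot\Psi(g,\t(x))$ and $\Psi(g,\s(x))^{-1}\cdot\Psi(g',g\cdot\s(x))^{-1}$. The morphism identity $\Psi(g'g,m)=\Psi(g',g\cdot m)\cdot\Psi(g,m)$ -- equivalently, that $g\mapsto b^g$ is a group morphism into $\text{Bis}(\G)$, as already noted before the statement -- collapses these to $\Psi(g'g,\t(x))$ and $\Psi(g'g,\s(x))^{-1}$, yielding $\Ac_{g'g}(x)$. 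Unit preservation $\Psi(e,m)=\mathbf{1}_m$ gives $\Ac_e=\id$, and smoothness of $G\times\G\to\G$ is inherited from that of $\Psi$ and of the groupoid operations. I expect the main obstacle to be purely bookkeeping: tracking the composability constraints and recording which factors cancel under the morphism identity.

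For the algebra statement the argument is the infinitesimal shadow of the above. Each $\psi_*(\xi)\in\Gamma(A)$ defined by $\psi_*(\xi)_m=\psi(\xi,m)$ is a genuine section, so $(\aci_\xi)_*=[\psi_*(\xi),-]_A$ is by definition an inner derivation, i.e.\ lies in $\InnDer(A)$. To see that $\xi\mapsto(\aci_\xi)_*$ is a Lie algebra morphism I would invoke that $\ad:\Gamma(A)\to\Der(A)$, $\alpha\mapsto[\alpha,-]_A$, is a Lie algebra homomorphism (this is the Jacobi identity for the algebroid bracket), together with the fact recorded just before the statement that $\psi$ is a Lie algebroid morphism iff $\psi_*:\gg\to\Gamma(A)$ is a Lie algebra morphism; then $[(\aci_\xi)_*,(\aci_\eta)_*]=[[\psi_*(\xi),\psi_*(\eta)]_A,-]_A=[\psi_*([\xi,\eta]_\gg),-]_A=(\aci_{[\xi,\eta]})_*$. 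Finally, since the symbol of an inner derivation $[\alpha,-]_A$ is the anchor $\sharp\alpha$, and $\psi$ covers the infinitesimal action, the symbol of $(\aci_\xi)_*$ is the vector field $\sharp\psi_*(\xi)$ assigned to $\xi$ by the infinitesimal action; hence the inner action covers the infinitesimal action on $M$, as claimed.
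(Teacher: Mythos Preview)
Your proof is correct. The paper does not supply a proof of this proposition at all: it is introduced with ``One may now easily check that:'' and the verification is left to the reader. Your argument spells out exactly that verification---identifying $b^g=\Psi(g,\cdot)$ as a bisection, reducing $\Ac_{g'g}=\Ac_{g'}\circ\Ac_g$ to the morphism identity $\Psi(g'g,m)=\Psi(g',g\cdot m)\cdot\Psi(g,m)$, and handling the infinitesimal case via the Jacobi identity for $[\,\cdot\,,\,\cdot\,]_A$ together with the equivalence (recorded just before the proposition) between $\psi$ being a Lie algebroid morphism and $\psi_*$ being a Lie algebra morphism---so there is nothing to compare beyond noting that you have supplied the details the authors omit.
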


The relevant notion for this work is the following:

\begin{defn}\label{prehamiltonian}
 A pre-hamiltonian action of a Lie group $G$ on a Lie algebroid $A$ with \textbf{pre-hamiltonian moment map} $\psi_*:\g\to \Gamma(A)$ is an action $\ac:G\to \Aut(A)$ such that:
\begin{itemize}
 \item $\left.\frac{\d}{\d t}(\ac_{\exp(-t\xi)})_*(\beta)\right|_{t=0}=[\psi_*(\xi),\beta]_A\quad (\xi\in\gg,\ \beta\in\Gamma(A)),$
 \item $\psi_*$ is a $G$-equivariant morphism of Lie algebras.
\end{itemize}
\end{defn}
Note that the $G$-equivariance condition is automatically satisfied whenever $G$ is connected.

\subsection{Integration of inner actions}

Let us now see in which circumstances one is able to invert arrows in the last diagram.

\begin{prop}\label{prop:int:action:ss}
Let $\gg\to\X(M)$ be a complete Lie algebra action and $\psi:\gg\ltimes M\to A$ a Lie algebroid morphism from the action Lie algebroid to a Lie algebroid $A$. For any Lie groupoid $\G$ integrating $A$, the associated inner action $\aci_*:\gg\to\InnDer(A)$ integrates to a
inner action $\Ac:G(\gg)\to\InnAut(\G)$, where $G(\gg)$ is the 1-connected Lie group integrating $\gg$.
\end{prop}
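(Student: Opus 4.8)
The plan is to reduce the whole statement to the integration of one Lie algebroid morphism, and then to feed the result into the description of inner actions via morphisms from Section~\ref{sub:sec:inner:actions}. Recall that, by \eqref{eq:inner:algebra:action}, the infinitesimal inner action $\aci_*$ is exactly the one attached to the Lie algebroid morphism $\psi:\gg\ltimes M\to A$; dually, by the Proposition in Section~\ref{sub:sec:inner:actions}, any Lie groupoid morphism $\Psi:G(\gg)\ltimes M\to\G$ covering the $G(\gg)$-action on $M$ produces, through \eqref{eq:inner:group:action}, an inner action $\Ac:G(\gg)\to\InnAut(\G)$. So it suffices to integrate $\psi$ to such a $\Psi$ and to check that the resulting $\Ac$ differentiates back to $\aci_*$.

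First I would construct the domain groupoid. Since the Lie algebra action $\gg\to\X(M)$ is complete, Palais' theorem integrates it to a smooth action of the $1$-connected group $G(\gg)$ on $M$, giving the action Lie groupoid $G(\gg)\ltimes M\tto M$, whose Lie algebroid is $\gg\ltimes M$. Its source fibers are copies of $G(\gg)$ and hence $1$-connected, so $G(\gg)\ltimes M$ is precisely the source $1$-connected integration of $\gg\ltimes M$.

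Next I would invoke the integration of morphisms (Lie's second theorem for algebroids, see \cite{CrFe3}): since the domain $G(\gg)\ltimes M$ is source $1$-connected and $\G$ integrates the target $A$, the morphism $\psi$ integrates to a unique Lie groupoid morphism $\Psi:G(\gg)\ltimes M\to\G$. I would stress that this step needs only the source $1$-connectedness of the \emph{domain} and no hypothesis on $\G$, which is exactly why the conclusion holds for an arbitrary integration $\G$ of $A$. Setting $\Ac_g(x):=\Psi(g,\t(x))\cdot x\cdot\Psi(g,\s(x))^{-1}$ as in \eqref{eq:inner:group:action} then yields, by the cited Proposition, an inner action $\Ac:G(\gg)\to\InnAut(\G)$ covering the action on $M$.

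It remains to check that $\Ac$ integrates $\aci_*$, which I would do by differentiating $\Ac$ at the identity. By the commutativity of the inner-action diagram of Section~\ref{sub:sec:inner:actions}, the inner Lie algebra action obtained from $\Ac$ is the one associated with the differential $\d\Psi$; and since $\Psi$ was built to integrate $\psi$, uniqueness gives $\d\Psi=\psi$, so this recovers $[\psi_*(\xi),-]_A=(\aci_\xi)_*$. The main obstacle is the morphism-integration step: it is where completeness is genuinely used --- both to produce the $G(\gg)$-action on $M$ and thereby the source $1$-connected action groupoid $G(\gg)\ltimes M$ --- and where one must be careful to apply the theorem with the $1$-connected groupoid as the source rather than the target.
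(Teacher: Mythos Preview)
Your proof is correct and follows essentially the same approach as the paper: use completeness to build the source $1$-connected action groupoid $G(\gg)\ltimes M$, integrate $\psi$ to a groupoid morphism, and apply formula \eqref{eq:inner:group:action}. The only cosmetic difference is that the paper first integrates $\psi$ to $\widetilde{\Psi}:G(\gg)\ltimes M\to\G(A)$ and then composes with the covering projection $\G(A)\to\G$, whereas you invoke Lie~II directly with target $\G$; both yield the same $\Psi$.
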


\begin{proof}
By the assumptions, we have a Lie group action $G(\gg)\times M\to M$ and the corresponding action groupoid $G(\gg)\ltimes M\tto M$ is source 1-connected.
Since every Lie algebroid morphism $\psi:\gg\ltimes M\to A$ integrates to 
a Lie groupoid morphism $\widetilde{\Psi}:G(\gg)\ltimes M\to\G(A)$. Denote by $\Psi$ the composition of $\widetilde{\Psi}$ with the natural projection $\G(A)\to \G$, then one obtains an inner action $\Ac:G(\gg)\to\InnAut(\G)$ by the formula \eqref{eq:inner:group:action}. As easily checked, it integrates the 
inner action $\aci_*:\gg\to\InnDer(A)$.
\end{proof}

\begin{rem}
Notice that the above result is slightly better than the integration of non inner actions we refered to in the end of the preceeding subsection: in general, in order to integrate a Lie algebra action of $\gg$ on a Lie algebroid $A$ to a Lie group action of $G$ on a Lie groupoid $\G$, we need \emph{both} $G$ to be 1-connected and $\G$ to be source 1-connected (and the action to be complete).

This is important for our purposes, as $G$ is the structure group of a principal bundle, thus its topology imposed. So we need to refine the Prop. \ref{prop:int:action:ss} to non simply connected, and non connected groups.
\comment{oli: not 'later on'}
\end{rem}

 Assume now that we want to integrate the inner action $\aci:\gg\to\InnDer(A)$ associated with $\psi:\gg\ltimes M\to A$ to an action of a connected (but not necessarily 1-connected) Lie group $G$ with Lie algebra $\gg$. Recall that any connected Lie group $G$ and its fundamental group fit into an exact sequence of Lie groupoids as follows:
\begin{equation}\label{ex:groups}
\SelectTips{cm}{}\xymatrix@C=15pt{ 1\ar[r]&\pi_1(G)\ar[r]& G(\gg)\ar[r]& G\ar[r]& 1.} 
\end{equation}
Then it is easy to prove the following:
\begin{prop}
Let $G$ be a connected Lie group acting on a manifold $M$, $\G$ a Lie groupoid over $M$, and a Lie groupoid morphism $\overline{\Psi}:G(\gg)\ltimes M\to\G$.

Then $\overline{\Psi}$ descends to a Lie groupoid morphism $\Psi:G\ltimes M\to\G$ if and only if it  
takes values in units when restricted to $\pi_1(G)\ltimes M\subset G(\gg)\ltimes M$, namely:
\[\overline{\Psi}(\pi_1(G)\ltimes M)={\bf 1}_M.\]
\end{prop}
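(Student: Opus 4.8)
The plan is to run the standard descent argument for a morphism through the covering quotient furnished by the exact sequence \eqref{ex:groups}, which realizes $G$ as the quotient of $G(\gg)$ by the discrete central subgroup $\pi_1(G)$. First I would record a structural observation: since the $G(\gg)$-action on $M$ is pulled back from the $G$-action along the projection $q\colon G(\gg)\to G$, every element $n\in\pi_1(G)=\ker q$ acts trivially on $M$. Consequently $\pi_1(G)\ltimes M$ is a wide normal subgroupoid of $G(\gg)\ltimes M$ whose arrows $(n,m)$ are all isotropy arrows at $m$, and $q$ induces a surjective groupoid morphism $\bar q\colon G(\gg)\ltimes M\to G\ltimes M$, $(g,m)\mapsto(q(g),m)$, with kernel $\pi_1(G)\ltimes M$. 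Both $\bar q$ and $\overline{\Psi}$ cover the identity on $M$, as in the preceding discussion of inner actions.

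The easy direction is ``only if''. If $\overline{\Psi}=\Psi\circ\bar q$ for some groupoid morphism $\Psi$, then for $(n,m)\in\pi_1(G)\ltimes M$ one has $\bar q(n,m)=(e,m)={\bf 1}_m$, so $\overline{\Psi}(n,m)=\Psi({\bf 1}_m)={\bf 1}_m$ because $\Psi$ sends units to units; hence $\overline{\Psi}(\pi_1(G)\ltimes M)={\bf 1}_M$.

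The content lies in the converse. Assuming $\overline{\Psi}(\pi_1(G)\ltimes M)={\bf 1}_M$, I would set $\Psi(\bar g,m):=\overline{\Psi}(g,m)$ for any lift $g\in q^{-1}(\bar g)$ and verify well-definedness: two lifts satisfy $g^{-1}g'=n\in\pi_1(G)$, i.e. $g'=gn$, and since $n$ fixes $m$ one gets the factorization $(g',m)=(g,m)\cdot(n,m)$ in $G(\gg)\ltimes M$; applying the morphism $\overline{\Psi}$ together with the hypothesis $\overline{\Psi}(n,m)={\bf 1}_m$ yields $\overline{\Psi}(g',m)=\overline{\Psi}(g,m)\cdot{\bf 1}_m=\overline{\Psi}(g,m)$. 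The identity $\Psi\circ\bar q=\overline{\Psi}$ is then immediate, and the groupoid axioms for $\Psi$ (compatibility with source, target, units, and composition) are inherited from those of $\overline{\Psi}$ by choosing compatible lifts, using in particular that $g_2 g_1$ is a lift of $\bar g_2\bar g_1$.

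The only genuinely non-formal step is the smoothness of $\Psi$, which I expect to be the main, if mild, obstacle. Here I would invoke that $q\colon G(\gg)\to G$ is a covering map, so $\bar q$ is a surjective submersion admitting local smooth sections $\sigma$; over the corresponding opens the formula $\Psi=\overline{\Psi}\circ\sigma$ exhibits $\Psi$ as a composite of smooth maps, and the local expressions agree by the well-definedness just checked. This produces a globally smooth groupoid morphism $\Psi\colon G\ltimes M\to\G$ with $\Psi\circ\bar q=\overline{\Psi}$, completing the argument.
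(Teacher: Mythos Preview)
Your argument is correct and complete. The paper does not actually provide a proof of this proposition: it merely prefaces the statement with ``it is easy to prove the following'' and moves on, so there is nothing to compare against beyond confirming that your approach is the natural one the authors had in mind. Your descent argument via the surjective groupoid morphism $\bar q\colon G(\gg)\ltimes M\to G\ltimes M$ with kernel $\pi_1(G)\ltimes M$, together with the smoothness check using local sections of the covering $q$, is exactly the standard route and is presumably what the authors intended when calling this easy.
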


\begin{rem}
Although $\psi:\gg\ltimes M\to A$ might does not integrate to $\Psi:G\ltimes M\to\G$, it may still integrate to a morphism $\Psi':G\ltimes M\to\G'$ for a smaller Lie groupoid $\G'$ integrating $A$. This can be decided as follows.

 First we integrate $\psi:\gg\ltimes M\to A$ to a Lie groupoid morphism $\widetilde{\Psi}:G(\gg)\ltimes M\to\G(A)$ with values in the source 1-connected Lie groupoid 
integrating $A$. Then, given any connected Lie group $G$ integrating $\gg$, we introduce 
a bundle of groups $\Delta$ over $M$, defined in the following way:
\[ 
\Delta:=\widetilde{\Psi}(\pi_1(G)\ltimes M)\subset \G(A)
\]
Recall that if $\Delta$ is a totally disconnected wide \textbf{normal subgroupoid} of $\G(A)$, \emph{i.e.} an embedded Lie subgroupoid of $\G(A)$ which is a bundle of Lie groups and such that $a.u.a^{-1}\in N_{\t(a)}$ for any $a\in \G(A)$ and $u\in N_{\s(a)}$, then the quotient $\G(A)/\Delta$ is a Lie groupoid (see the discussion in \cite[Thm. 1.4]{Gu}). Furthermore, it is easy to see that $\G(A)/\Delta$ integrates $A$. Therefore, we obtain a Lie groupoid morphism:
\[
\Psi:G\ltimes M\to\G(A)/\Delta.
\]
Of course, $\psi$ integrates to a morphism $G\ltimes M\to\G$ whenever $\G$ is covered by $\G_0:=\G(A)/\Delta$.
\end{rem}

The following theorem, together with the discussion above, justifies the definition of a pre-hamiltonian action. Notice that no assumption is made about simply connectedness, nor connectedness of $G$.


\begin{thm}\label{int:ham:action}
Let ${\bf{a}}:G\rightarrow \Aut(A)$ be a pre-hamiltonian action of a Lie group $G$ on a Lie algebroid $A$ with pre-moment map $\psi_*:\gg\to \Gamma(A)$ and:
\begin{itemize}
\item $\psi: \g\ltimes M \to A$ the Lie algebroid associated with $\psi_*$,
\item $\widetilde{\Psi}:G(\gg)\ltimes M\to \G(A)$ the groupoid morphism integrating  $\psi$, 
\item $\Delta\subset \G(A)$, the subset defined by:  $\Delta:=\widetilde{\Psi}(\pi_1(G)\ltimes M)\subset \G(A)$.
\end{itemize}
Then the following assertions hold:
\begin{enumerate}[i)]
\item $\Delta $ is a wide, normal, totally disconnected subgroupoid of $\G(A)$,
\item ${\bf a}$ integrates to a groupoid action $\Ac:G\rightarrow \Aut(\G(A)/\Delta)$,
\end{enumerate}
 Moreover, $\G(A)/\Delta$ is a Lie groupoid \emph{iff} $\Delta\subset\G(A)$ is an embedding, in which case $\widetilde{\Psi}$ descends to a Lie groupoid morphism $\Psi:G\times M\to \G(A)/\Delta$ such that the following relations hold:
\begin{align}
                 \Ac_h(x)\quad     &=\Psi_n(h)\cdot x \cdot\Psi_m(h)^{-1}\label{eq:int:ham:action1} \\
                \Psi_{gm}(ghg^{-1})&=\quad \Ac_g(\Psi_m(h))\label{eq:int:ham:action2}
\end{align}
for any $h\in G^\circ,\ x\in\G(A)/\Delta,\ g\in G$, where $m:=\s(x)$ and  $n:=\t(x)$.
\end{thm}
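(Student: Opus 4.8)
The plan is to integrate everything first over the universal cover $G(\gg)$, where the inner action of Proposition~\ref{prop:int:action:ss} is available, and only afterwards to push the picture down to $G$ by passing to the quotient $\G(A)/\Delta$. The whole argument will hinge on identifying two a priori different integrations of one and the same infinitesimal action.

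First I would produce these two integrations. Since $\mathbf a:G\to\Aut(A)$ is a genuine Lie group action, its infinitesimal generators are automatically complete, and because $\G(A)$ is source $1$-connected the automorphisms $\mathbf a_g$ integrate to Lie groupoid automorphisms, giving a genuine action $\widehat{\Ac}:G\to\Aut(\G(A))$ lifting $\mathbf a$. On the other hand pre-hamiltonicity (Definition~\ref{prehamiltonian}) says that the differential of $\mathbf a$ is the inner action $\aci_*(\xi)=[\psi_*(\xi),-]$, which by Proposition~\ref{prop:int:action:ss} integrates to an \emph{inner} action $\widetilde{\Ac}:G(\gg)\to\InnAut(\G(A))$ given by
\[ \widetilde{\Ac}_g(x)=\widetilde{\Psi}(g,\t(x))\cdot x\cdot\widetilde{\Psi}(g,\s(x))^{-1}. \]
The key step, and the one I expect to be the main obstacle, is to check that $\widetilde{\Ac}=\widehat{\Ac}\circ q$, where $q:G(\gg)\to G$ is the covering projection. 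Both sides are $G(\gg)$-actions on $\G(A)$ with the same differential $\aci_*$, so uniqueness of integration of infinitesimal actions to the $1$-connected group $G(\gg)$ forces them to coincide. The payoff is that for every $z\in\pi_1(G)=\ker q$ one has $\widehat{\Ac}_{q(z)}=\widehat{\Ac}_{e}=\id$, hence
\[ \widetilde{\Ac}_z=\id\qquad(z\in\pi_1(G)). \]

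With this identity in hand, part (i) is quick. As $\pi_1(G)$ is discrete and acts trivially on $M$, the groupoid $\pi_1(G)\ltimes M$ is a bundle of discrete groups all of whose arrows are isotropy; since $\widetilde{\Psi}$ covers $\id_M$ and carries units to units, $\Delta=\widetilde{\Psi}(\pi_1(G)\ltimes M)$ is a wide, totally disconnected subgroupoid. For normality I would feed $\widetilde{\Ac}_z=\id$ into the inner formula: for $x\colon m\to n$ in $\G(A)$ and $z\in\pi_1(G)$ it reads $\widetilde{\Psi}(z,n)\cdot x=x\cdot\widetilde{\Psi}(z,m)$, that is $x\cdot\widetilde{\Psi}(z,m)\cdot x^{-1}=\widetilde{\Psi}(z,n)\in\Delta_n$, which is exactly the normality of $\Delta$.

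For part (ii) and the final statements I would first compute, using only the morphism property of $\widetilde{\Psi}$ and the inner formula, that $\widetilde{\Ac}_g(\widetilde{\Psi}(z,m))=\widetilde{\Psi}(gzg^{-1},g\cdot m)$; since $\pi_1(G)$ is normal in $G(\gg)$ this shows $\widehat{\Ac}_{q(g)}=\widetilde{\Ac}_g$ preserves $\Delta$, so $\widehat{\Ac}$ descends to an action $\Ac:G\to\Aut(\G(A)/\Delta)$. The smoothness dichotomy is precisely the quotient theorem for wide, normal, totally disconnected subgroupoids quoted from \cite{Gu}: when $\Delta$ is embedded, $\G(A)/\Delta$ is a Lie groupoid (still integrating $A$, as $\Delta$ is totally disconnected), and conversely smoothness of the quotient forces $\Delta$ to be embedded. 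Finally, composing $\widetilde{\Psi}$ with the projection $\G(A)\to\G(A)/\Delta$ sends $\pi_1(G)\ltimes M$ into the units, so by the descent criterion stated just before the theorem it factors through a groupoid morphism $\Psi:G\ltimes M\to\G(A)/\Delta$. Relation~\eqref{eq:int:ham:action1} is then the inner formula for $\widehat{\Ac}$ descended to the quotient, after lifting $h\in G^\circ$ to $G(\gg)$, and relation~\eqref{eq:int:ham:action2} is the descent of the identity $\widetilde{\Ac}_{\tilde g}(\widetilde{\Psi}(\tilde h,m))=\widetilde{\Psi}(\tilde g\tilde h\tilde g^{-1},\tilde g\cdot m)$, extended to arbitrary $g\in G$ using the $G$-equivariance of $\psi_*$ built into the pre-hamiltonian hypothesis.
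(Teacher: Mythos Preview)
Your proposal is correct and follows the same overall strategy as the paper: lift $\mathbf a$ to an action $\widehat{\Ac}$ of $G$ on $\G(A)$, then show that over the identity component (equivalently, over $G(\gg)$) this coincides with the inner action coming from $\widetilde{\Psi}$; normality of $\Delta$, its invariance, and the two relations then drop out formally.

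The one genuine difference is how this coincidence is established. You invoke uniqueness of integration of infinitesimal actions of the $1$-connected group $G(\gg)$, which is clean and conceptual. The paper instead writes the lifted action explicitly on $A$-paths as $\widetilde{\Ac}_g([q]_A)=[\ac_g\circ q]_A$, picks for $h\in G^\circ$ a $\gg$-path $\xi_\epsilon$ integrating to a curve $h_\epsilon$ from $e$ to $h$, observes that $(\ac_{h_\epsilon})_*(q_t)$ and $\psi_*(\xi_\epsilon)$ satisfy the evolution equation~\eqref{evolution}, and then applies the $A$-homotopy lemma (Proposition~\ref{hgeom}) to obtain directly
\[
\widetilde{\Ac}_h([q]_A)=\widetilde{\Psi}_{\t(q)}([\xi]_\gg)\cdot[q]_A\cdot\widetilde{\Psi}_{\s(q)}([\xi]_\gg)^{-1}.
\]
This is really a hands-on proof, via the $A$-path technology developed in Section~3.1, of the very uniqueness statement you quote. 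Your route is shorter and avoids reopening the $A$-path machinery; the paper's route has the virtue of being self-contained within its own framework and of making the identity~\eqref{eq:int:ham:action1} appear as an equation between explicit $A$-homotopy classes rather than as a consequence of an abstract uniqueness principle.
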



\begin{proof}
Since $G$ acts on $A$ by Lie algebroid automorphisms, one can lift $\ac$ into an action $\widetilde{\Ac}$ of $G$ on $\G(A)$ by groupoid automorphisms as follows:
\[\widetilde{\Ac}_g([q]_{A}):=[\ac_g\!\circ q]_{A},\quad (q\in P(A),\ g\in G).\]
Consider now an element $h$ lying in the neutral component $G^\circ$ of G, and an $A$-path $q$. We extend  $q$ into a time dependant section of $A$ (that we still denote by $q$) and consider any $\g$-path $\xi:\epsilon\mapsto \xi_\epsilon\in\g$ that  induces a path $h_\epsilon$ in $G^\circ$ between the identity and $h$. By the construction,  $(\ac_{h_\epsilon})_*(q_t)$ is a solution of the following evolution equation:
\[ \bigl[(\ac_{h_\epsilon})_*(q_t),\psi_*(\xi_\epsilon)\bigr]_A=\frac{\d}{\d\epsilon} (\ac_{h_\epsilon})_*(q_t)\quad(\epsilon,t\in I).\]
Then by the  Prop. \ref{hgeom} with $\frac{\d}{\d t}\xi=0$, we obtain:
\begin{equation}\label{eq:ac:conj1} \widetilde{\Ac}_h([q]_{A})=\widetilde{\Psi}_y([\xi]_\g)\cdot[q]_{A}\cdot\widetilde{\Psi}_x([\xi]_\g)^{-1}.
\end{equation}
In particular, if $h=1$ that is, $\xi_\epsilon$ induces a loop in $G^\circ$, we obtain that $\Delta$ is a normal subgroupoid of $\G(A)$.

Next, we have to make sure that $\widetilde{\Ac}$ induces an action on $\G(A)/\Delta$, so we need to check that $\tilde{A}_h(\Delta)=\Delta$. For this, we apply the  equation \eqref{eq:ac:conj1} with $q=\widetilde{\Psi}{[\eta]_\g}$, where $\eta$ is a $\g$-path inducing a loop in $G$, and we use successively the fact that $\widetilde{\Psi}$ is a Lie groupoid homomorphism, then that $\pi_1(G)$ lies in $G(\g)$ as a normal subgroup. The first relation then easily follows. The second one is obtained by using the equivariance condition in the Def. \ref{prehamiltonian}.
\end{proof}
Note that when the context makes no confusion, we may use notations loosely, using for instance $\ac_\xi$, $\psi$ and so on, instead of $(\ac_\xi)_*$, $\psi_*$ and so on.
\begin{ex}\label{ex:non-smooth-quotient}
Here a basic example where the resulting groupoid $\G(A)/\Delta$ is not smooth. Consider the $1$-dimensional (abelian) Lie algebra $\z=\Rr$, its dual $\z^*$ endowed with the trivial linear Poisson structure, and $A:=T^*\z^*\simeq \z\ltimes \z*$ the corresponding Lie algebroid (with trivial bracket and anchor). Then $A$ integrates to a bundle of Lie groups $Z\ltimes\z^*$, where $Z$ identifies with the Lie group $\Rr$.

 Consider furthermore the trivial action of $G:=S^1\simeq \mathbb{R}/2\pi\mathbb{Z}$ on $\z^*$. Then any application $J:\z^*\to \text{Lie}(S^1)$ can be chosen to be a moment map. From the Lie algebroid point of view (see the def. \ref{prehamiltonian}) we only have a pre-hamiltonian moment map $\psi_*:\text{Lie}(S^1)\to\Gamma(\z\ltimes\z^*),$ $X\mapsto \d J(X)$. The corresponding Lie algebroid morphism $\psi:\text{Lie}(S^1)\ltimes\z^*\to\z\ltimes\z^*$ is given by $(X,z)\mapsto(\d J_z(X),z)$.  It integrates to a Lie groupoid morphism $\mathbb{R}\ltimes\z^*\to Z\ltimes\z^*$ given by $(\theta,z)\mapsto(\d J_z(\theta),z)$. The exact sequence \eqref{ex:groups} reads:
\[\SelectTips{cm}{}\xymatrix@C=15pt{ 1\ar[r]&2\pi\mathbb{Z}\ar[r]& \mathbb{R}\ar[r]& S^1\ar[r]& 1,}\]
hence we obtain $\Delta_z=\left\{(\d J_z(2k\pi),z),\ k\in\Zz\right\}$. Clearly, $\Delta$ defines a normal subgroupoid of $Z\ltimes\z^*$, however, $Z\ltimes\z^*{/\Delta}$ is not smooth if $\d J$ vanishes at some point $z_0$.
\end{ex}

\bibliographystyle{amsalpha}


\end{document}